\documentclass[12pt]{article}
\usepackage{hyperref}
\hypersetup{
    pdftitle={3-degenerate induced subgraph of a planar graph},
    pdfauthor={Yangyan Gu, H. A. Kierstead, Sang-il Oum, Hao Qi, and Xuding Zhu}
}
\usepackage{float}
\usepackage{authblk}

\usepackage{graphicx}
\usepackage{subfigure}
\usepackage{tikz}
\usetikzlibrary{backgrounds,patterns,calc}
\usepackage[normalem]{ulem}
\usepackage[margin=3cm]{geometry}
\usepackage{enumitem}

\newlist{enum*}{enumerate*}{2}%
\setlist[enum*]{label={(\roman*)}}
\newlist{enum}{enumerate}{2}%
\setlist[enum]{label=(\alph*), nosep}
\setdescription{font=\it, leftmargin=0pt, itemsep=2pt, topsep=3pt,parsep=2pt, labelindent=2em, listparindent=2em}

\def\0{\emptyset}
\def\sub{\subseteq}

\newcommand\abs[1]{\lvert #1\rvert}

\usepackage{amsmath, amssymb, latexsym, amsthm, color}

\numberwithin{equation}{section}%

\newtheorem{guess}{Conjecture}[section]

\newtheorem{theorem}{Theorem}[section]

\newtheorem{lemma}[theorem]{Lemma}
\newtheorem{corollary}[theorem]{Corollary}

\newtheorem{observation}{Observation}[section]

\theoremstyle{definition}
\newtheorem{definition}{Definition}[section]

\newcommand{\dg}[2]{$(#1,\!#2)$-degenerate}
\newcommand{\ol}[1]{\overline{#1}}%
\newcommand{\inn}{\operatorname{int}}
\newcommand{\ext}{\operatorname{ext}}

\makeatletter
\providecommand*{\cupdot}{%
  \mathbin{%
    \mathpalette\@cupdot{}%
  }%
}
\newcommand*{\@cupdot}[2]{%
  \ooalign{%
    $\m@th#1\cup$\cr
    \sbox0{$#1\cup$}%
    \dimen@=\ht0 %
    \sbox0{$\m@th#1\cdot$}%
    \advance\dimen@ by -\ht0 %
    \dimen@=.5\dimen@
    \hidewidth\raise\dimen@\box0\hidewidth
  }%
}

\providecommand*{\bigcupdot}{%
  \mathop{%
    \vphantom{\bigcup}%
    \mathpalette\@bigcupdot{}%
  }%
}
\newcommand*{\@bigcupdot}[2]{%
  \ooalign{%
    $\m@th#1\bigcup$\cr
    \sbox0{$#1\bigcup$}%
    \dimen@=\ht0 %
    \advance\dimen@ by -\dp0 %
    \sbox0{\scalebox{2}{$\m@th#1\cdot$}}%
    \advance\dimen@ by -\ht0 %
    \dimen@=.5\dimen@
    \hidewidth\raise\dimen@\box0\hidewidth
  }%
}
\makeatother

\begin{document}

	\title{$3$-degenerate induced subgraph of a planar graph}
	
	\author[1]{Yangyan Gu}
	\author[2]{H. A. Kierstead}
	\author[3,4]{Sang-il Oum\thanks{Supported by the Institute for Basic Science (IBS-R029-C1)}}
	\author[5,6,1]{\\Hao Qi\thanks{Partially supported by research grants NSFC 11771403, NSFC 11871439 and MOST 107-2811-M-001-1534. }}
	\author[1]{Xuding~Zhu\thanks{Corresponding author. Partially supported by research grants NSFC 11971438,12026248, U20A2068.}}
	\affil[1]{Department of  Mathematics, Zhejiang Normal University, Jinhua,~China}
	\affil[2]{School of Mathematical and Statistical Sciences, Arizona~State~University, Tempe, AZ, USA}
	\affil[3]{Discrete Mathematics Group, Institute for Basic Science (IBS), Daejeon,~South Korea}
	\affil[4]{Department of Mathematical Sciences,  KAIST, Daejeon,~South Korea}
	\affil[5] {College of Mathematics and Physics, Wenzhou University, Wenzhou, China}
	\affil[6] {Institute of Mathematics, Academica Sinica, Taipei} 
	\affil[ ]{\small Email: \texttt{254945335@qq.com}, \texttt{kierstead@asu.edu},
	\texttt{sangil@ibs.re.kr},
	\texttt{qihao@wzu.edu.cn},~\texttt{xdzhu@zjnu.edu.cn}
	}

	\date{\today}
	
	\maketitle

	\begin{abstract}
		A graph $G$ is  $d$-degenerate  if every non-null subgraph of $G$ has a vertex of degree at most $d$.
		We prove that every $n$-vertex planar graph has a $3$-degenerate induced subgraph of order at least $3n/4$.
 
		\bigskip\noindent \textbf{Keywords:} planar graph; graph degeneracy.
	\end{abstract}

\section{Introduction}	

	Graphs in this paper are simple, having no loops and no parallel edges. For a graph $G=(V,E)$, 
	the neighbourhood of $x\in V$ is 
	denoted by $N(x)=N_G(x)$, the degree of $x$ 
	is denoted by $d(x)=d_G(x)$, and the minimum degree of $G$ is denoted by $\delta(G)$. Let $\Pi=\Pi(G)$ be the set of total orderings of $V$. For $L\in \Pi$, we orient each edge  $vw\in E$ as $(v,w)$ if $w<_L v$ to form a directed graph $G_L$.  We denote  the \emph{out-neighbourhood}, also called the \emph{back-neighbourhood}, of $x$ by $N_G^L(x)$, the \emph{out-degree}, or \emph{back-degree}, of $x$  by $d_G^L(x)$. We write $\delta^+(G_L)$
	and $\Delta^+(G_L)$ to denote the minimum out-degree and the maximum out-degree, respectively, of $G_L$. We define $|G|:=|V|$, called 
	the \emph{order} of $G$, and $\|G\|:=|E|$.

	 An ordering $L\in\Pi(G)$     
	 is \emph{$d$-degenerate} if $\Delta^+(G_L)\le d$. A graph $G$  is $d$-degenerate if some $L\in\Pi(G)$ is $d$-degenerate. The  \emph{degeneracy} of $G$ is $\min_{L\in\Pi(G)}\Delta^+(G_L)$.   
	  It  is well known that  the degeneracy of 
	 $G$ is equal to $\max_{H\subseteq G}\delta(H)$.
 	
 	Alon, Kahn, and Seymour~\cite{aks} initiated the study of maximum $d$-degenerate
 	induced subgraphs in a general graph
 	and proposed the problem on planar graphs. 
	We study maximum  $d$-degenerate induced subgraphs of planar graphs. 
	For a non-negative integer $d$ and a graph $G$, let %
	\begin{align*}
	\alpha_d(G)&=\max\{|S|:S\subseteq V(G), ~ G[S] \text{ is $d$-degenerate}
	\}\text{ and}\\
 	\bar{\alpha}_d&= \inf \{\alpha_d(G)/\abs{V(G)}: G  \text{ is a non-null planar graph}
	\}.
	\end{align*}

        Let us review known bounds for $\bar\alpha_d$. Suppose that $G=(V,E)$ is a planar graph. For $d\ge 5$, trivially we have $\bar\alpha_d=1$ because planar graphs are $5$-degenerate.

        For $d=0$, a $0$-degenerate graph has no edges and therefore
	$\alpha_0(G)$ is the size of a maximum independent set of $G$. %
	By the Four Colour Theorem, $G$ has an  
	independent set $I$ with $|I|\ge\abs{V(G)}/4$. 
	Both $K_4$ and $C_8^2$ witness that $\bar\alpha_0\le 1/4$, so $\bar\alpha_0=1/4$.
         In 1968, Erd\H{o}s (see~\cite{Berge}) asked whether this bound  could be proved without the Four Colour Theorem. This question still remains open.
         In 1976, Albertson~\cite{Albertson1976} showed  that 
         $\bar\alpha_0 \ge 2/9$ independently of the Four Colour Theorem. This bound was improved to  $\bar\alpha_0 \ge 3/{13}$ independently of the Four Colour Theorem by Cranston and Rabern in 2016~\cite{Cranston}.

        For $d=1$, a $1$-degenerate graph is a
	forest. Since $K_4$ has no induced forest of
	order greater than $2$, we have $\bar\alpha_1 \le 1/2$.
	Albertson and Berman~\cite{AB} and Akiyama and Watanabe~\cite{AW1987} independently conjectured
	that $\bar\alpha_1 = 1/2$. In other words, every planar graph has an induced forest containing at least	half of its vertices.   This conjecture  	received much attention in the past 40 years; however, it remains largely open. 
	Borodin~\cite{borodin} proved that the vertex set of a planar graph
	can be partitioned
	into five classes such that the subgraph induced by the union of any two classes is   a
	forest. Taking the two largest classes yields an induced forest of
	order at least $2\abs{V(G)}/5 $. So $\bar\alpha_1 \ge 2/5$. This remains the best known lower bound on $\bar\alpha_1$. 
	On the other hand, the conjecture of Albertson and Berman, Akiyama and Watanabe was verified for some subfamilies of planar graphs. For example,    $C_3$-free, $C_5$-free,  or $C_6$-free planar graphs were shown in \cite{WL2002, FJMS2002} to be $3$-degenerate, and a greedy algorithm shows that the vertex set of a $3$-degenerate graph can be partitioned into two parts, each inducing a forest. 
	Hence $C_3$-free, $C_5$-free,  or $C_6$-free planar graphs satisfy the conjecture.
	Moreover, Raspaud and Wang~\cite{RW2008} showed that $C_4$-free planar graphs can be 
	partitioned into two induced forests,
	thus satisfying the conjecture.
	In fact, many of these graphs have larger induced forests. 
	Le~\cite{Le2018} showed that if a planar graph $G$ is $C_3$-free, then it has an induced forest with at least $5\abs{V(G)}/9$ vertices; 
	Kelly and Liu~\cite{KL2017} proved that if in addition $G$ is $C_4$-free, then 
	$G$ has an induced forest with at least $2\abs{V(G)}/3$ vertices.
		
        Now let us move on to the case that $d=2$. The octahedron  
has $6$ vertices and is $4$-regular, so  a   $2$-degenerate induced subgraph has at most $4$ vertices. Thus $\bar\alpha_2 \le 2/3$. We conjecture that equality holds. Currently, we only have a more or less trivial lower bound:  $\bar\alpha_2 \ge 1/2$, which follows from the fact that   $G$ is $5$-degenerate, and hence we can   
  greedily $2$-colour $G$ in an ordering that witnesses its degeneracy so that no vertex has three  out-neighbours of the same colour, i.e., each colour class induces a $2$-degenerate subgraph.    
  Dvo\v{r}\'{a}k and Kelly~\cite{DK2018} showed that 
  if a planar graph $G$ is $C_3$-free, 
  then it has a $2$-degenerate induced subgraph containing at least $4\abs{V(G)}/5$ vertices.

	For $d=4$, the icosahedron has $12$ vertices and is $5$-regular, so a  $4$-degenerate induced subgraph has at most $11$ vertices. Thus  $\bar\alpha_4 \le 11/12$. Again we conjecture that equality holds. The best known lower bound is $\bar\alpha_4 \ge 8/9$, which was obtained by 
	Luko\u{t}ka, Maz\'{a}k and Zhu~\cite{LMZ2014}.

	\medskip 

	In this paper, we study  $3$-degenerate induced subgraphs of planar graphs. Both the octahedron $C_6^2$ and
	the icosahedron witness that $\bar\alpha_3 \le 5/6$. 
    Here is our main theorem.   
	
	\begin{theorem}\label{main0}
		Every $n$-vertex planar graph has a
		 $3$-degenerate induced subgraph of order at least $3n/4$.
	\end{theorem}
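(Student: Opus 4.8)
The plan is to argue by contradiction: let $G$ be a planar counterexample on $n$ vertices with $n$ minimum and, subject to that, with $\|G\|$ maximum, so that $G$ is a triangulation (this extra hypothesis is harmless, since adding edges only decreases $\alpha_3$). Two reductions are immediate. If $n\le 5$, then every planar $G$ has $\alpha_3(G)=n\ge 3n/4$, so $n\ge 6$. If $G$ has a vertex $v$ with $d(v)\le 3$, take a $3$-degenerate ordering $L_0$ of a largest $3$-degenerate induced subgraph $G[S_0]$ of $G-v$ and append $v$ at the end: the back-degree of $v$ becomes at most $d(v)\le 3$ and no other back-degree changes, so $\alpha_3(G)\ge\alpha_3(G-v)+1\ge 3(n-1)/4+1>3n/4$, a contradiction. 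Hence $\delta(G)\ge 4$, and then planarity gives $\sum_{v}(d(v)-6)=2\|G\|-6n\le -12$.

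The core of the argument is a list of \emph{reducible configurations}. Each is a set $C$ of vertices together with a distinguished subset $D\subseteq C$ with $\abs{D}\le\abs{C}/4$, such that for every $3$-degenerate induced subgraph $G[S_0]$ of $G-C$ the subgraph $G[S_0\cup(C\setminus D)]$ is again $3$-degenerate; applying this to a largest such $S_0$ yields $\alpha_3(G)\ge\alpha_3(G-C)+\abs{C\setminus D}\ge 3(n-\abs{C})/4+3\abs{C}/4=3n/4$, contradicting minimality. Reducibility is checked through an \emph{insertion lemma}: starting from a $3$-degenerate ordering $L_0$ of $G[S_0]$, one may append the vertices of $C\setminus D$ at the end of $L_0$ in a carefully chosen internal order, and, more flexibly, splice a vertex $u$ into $L_0$ immediately in front of the last $L_0$-neighbour of $u$ whenever that neighbour currently has back-degree at most $2$; iterating these moves, together with occasional \emph{swaps} in which a vertex already selected is discarded in favour of a vertex of $C\setminus D$, keeps a $3/4$-fraction of each configuration. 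The configurations one expects to need are a degree-$4$ vertex whose four neighbours form a $4$-cycle and all have degree $4$ (this handles the octahedron, where $C$ is the whole graph and $D$ a single vertex), two adjacent low-degree vertices with several common neighbours (this handles the icosahedron, where $D$ is the adjacent pair), short paths and small clusters of degree-$4$ vertices, and a handful of mixed degree-$4$/degree-$5$ patterns; the $1{:}3$ deletion-to-retention ratio built into each is exactly what the target $3n/4$ affords.

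Finally one runs a discharging argument on the triangulation $G$: assign each vertex $v$ the charge $d(v)-6$, so the total is at most $-12$, and move charge from vertices of degree at least $7$ to nearby vertices of degree $4$ and $5$ by rules calibrated so that, in the absence of every configuration on the list, each vertex finishes with nonnegative charge --- contradicting the negative total. I expect the main obstacle to lie in the reducible configurations around clusters of degree-$4$ (and degree-$5$) vertices: there a single deletion-and-reinsertion typically fails, because splicing one vertex back in pushes a neighbour's back-degree from $3$ to $4$, so the whole cluster must be treated at once, with the deleted subset $D$ and the order in which the retained vertices are spliced back both chosen so that no back-degree ever exceeds $3$ while still deleting only a quarter of the cluster. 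Dovetailing the discharging rules with precisely the set of configurations one can afford to declare reducible is the delicate balance to strike.
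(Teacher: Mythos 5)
There is a genuine gap: what you have written is a plan, not a proof. The entire core of the argument --- the explicit list of reducible configurations, the verification that each is reducible, the discharging rules, and the proof that the configurations are unavoidable --- is left unspecified ("the configurations one expects to need are\dots", "I expect the main obstacle to lie in\dots"). Nothing in the proposal certifies that such a list exists, and the paper's own proof is evidence that this is precisely where the difficulty sits: the authors could not close the argument with the naive induction hypothesis ``every smaller planar graph has a $3$-degenerate induced subgraph on $3n/4$ vertices.'' Instead they prove a much stronger statement (Theorem~\ref{main}) in which boundary vertices of a plane graph count with weight $1$, interior vertices with weight $3/4$, a set $A$ of up to two prescribed boundary vertices is carried along, and a correction term $\tau(G)$ subtracts the maximum number of disjoint exposed \emph{special cycles} (the octahedron-like graphs $Q_1,\dots,Q_4^{++}$ of Figure~\ref{octahedron}). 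The extra $1/4$ earned for every interior vertex that becomes a boundary vertex after a deletion, and the bookkeeping of special cycles created or destroyed, is what makes every one of their reduction lemmas (Lemmas~\ref{lem-a}, \ref{cor-red}, \ref{cor-reducible}) balance. In your plain $3{:}1$ framework a deleted configuration gets no credit for the vertices it exposes, so the reducibility requirement is strictly harder than theirs, and the octahedron-like substructures that can appear after a deletion are exactly the places where ``delete one, keep three'' fails locally; you have no mechanism (such as their $\tau$) to absorb this.

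There is also a technical flaw in the insertion device as stated: splicing a vertex $u$ into $L_0$ immediately in front of its last $L_0$-neighbour controls that neighbour's back-degree, but it does not control the back-degree of $u$ itself, which becomes $d_{G[S_0\cup\{u\}]}(u)-1$ and may far exceed $3$; the move is only sound when $u$ has at most four neighbours among the already-selected vertices, a restriction your clusters of degree-$4$ and degree-$5$ vertices will often violate because their neighbours are typically all retained by induction. Finally, the unavoidability side is untouched: the total charge $\sum_v(d(v)-6)\le -12$ guarantees many vertices of degree $4$ and $5$, but you give no rules and no case analysis showing that every minimum counterexample with $\delta\ge 4$ contains one of your (unspecified) configurations. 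Until the configuration list is fixed, each reducibility claim proved with a correct reinsertion argument, and the discharging verified against that exact list, the theorem is not established by this route.
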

	
	We conjecture that the upper bounds for $\bar\alpha_d$  mentioned above are tight.
	We remark that it is possible to obtain infinitely many $3$-connected tight examples for each $d$ by gluing together many copies of the tight example discussed above.
		
	\begin{guess}\label{tau}
		$ \bar\alpha_2=2/3, \bar\alpha_3=5/6$, and $\bar\alpha_4=11/12$.
	\end{guess}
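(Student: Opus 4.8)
The conjecture combines an upper and a lower bound for each of $\bar\alpha_2$, $\bar\alpha_3$, $\bar\alpha_4$, and all three upper bounds are already in hand: as recorded above, the octahedron gives $\bar\alpha_2\le 2/3$, the octahedron and the icosahedron both give $\bar\alpha_3\le 5/6$, and the icosahedron gives $\bar\alpha_4\le 11/12$. So a proof of Conjecture~\ref{tau} reduces to the three matching lower bounds: every $n$-vertex planar graph $G$ has a $d$-degenerate induced subgraph on at least $\tfrac23 n$, $\tfrac56 n$, $\tfrac{11}{12}n$ vertices for $d=2,3,4$; equivalently, one can delete a set $X\subseteq V(G)$ with $\abs{X}\le n/3$, $n/6$, $n/12$ so that $G-X$ is $d$-degenerate, i.e.\ admits an ordering with maximum back-degree at most $d$.

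The plan is to prove each lower bound by the minimal-counterexample-plus-discharging scheme, pushing further the method behind Theorem~\ref{main0}. For $d=3$, take a counterexample $G$ minimising $\abs{V(G)}$ and then $\|G\|$, fix a plane embedding, and (as in Theorem~\ref{main0}) strengthen the statement so that a bounded set of vertices on the outer face can be treated specially, which is what lets local reductions be glued back. One first shows $G$ is $2$-connected and has minimum degree at least $4$: a vertex of degree at most $3$ can be put first in a witnessing order of a smaller instance supplied by minimality. The crux is to prove $G$ contains none of a catalogue of \emph{reducible configurations}: small plane subgraphs $H$ for which some $X\subseteq V(H)$ with $\abs{X}\le\abs{V(H)}/6$ can be deleted, minimality applied to what remains after adjusting the boundary, and a witnessing ordering extended across $V(H)\setminus X$ with back-degree staying at most $3$. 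Finally, one runs a standard charge assignment---say $d_G(v)-6$ on each vertex $v$ and $2\abs{f}-6$ on each face $f$, which sums to $-12$ by Euler's formula---and argues, moving charge from large faces and high-degree vertices to small-degree vertices, that the absence of reducible configurations forces every vertex and face to end non-negative, a contradiction. The cases $d=2$ and $d=4$ run along the same lines with the target ratio $\abs{X}/n$ set to $1/3$ and $1/12$ and with their own configurations; for $d=2$ one starts from the observation made above that a greedy $2$-colouring in a degeneracy order already gives $1/2$, so only a further $1/6$ must be extracted, and for $d=4$ one refines the Luko\u{t}ka--Maz\'{a}k--Zhu configurations behind $\bar\alpha_4\ge 8/9$. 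A possible alternative is to aim for the stronger partition statement that $V(G)$ splits into $3$, $6$, resp.\ $12$ parts such that deleting any one leaves a $d$-degenerate graph, in the spirit of Borodin's five-forest partition; keeping all but the smallest part then yields the bound.

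The main obstacle is precisely the assembly of a \emph{sufficient} set of reducible configurations. The best available lower bounds fall well short---$\bar\alpha_3\ge 3/4$ from the present paper, $\bar\alpha_2\ge 1/2$, and $\bar\alpha_4\ge 8/9$---and a purely local discharging argument seems unlikely to close the gap: the extremal examples, the octahedron and the icosahedron, are $4$- and $5$-regular with no sparse small subgraph to reduce on, so effective reductions must be semi-global, presumably resting on structural theorems for planar graphs of bounded edge density, in the spirit of the results quoted above for $C_k$-free planar graphs. It is exactly because such a catalogue (or a suitable sparse-structure theorem) is currently missing that these equalities are posed as a conjecture rather than proved here; the scheme above is the route I would take, and finding the right configurations is where the real difficulty lies.
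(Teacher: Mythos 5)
This statement is Conjecture~\ref{tau}, which the paper does not prove; it is posed as an open problem, and only the inequalities $\bar\alpha_2\le 2/3$, $\bar\alpha_3\le 5/6$, $\bar\alpha_4\le 11/12$ (from the octahedron and icosahedron) together with the weaker lower bounds $\bar\alpha_2\ge 1/2$, $\bar\alpha_3\ge 3/4$ (Theorem~\ref{main0}), $\bar\alpha_4\ge 8/9$ are established. Your proposal correctly records the upper bounds, but for the substantive half of the statement --- the matching lower bounds $\bar\alpha_2\ge 2/3$, $\bar\alpha_3\ge 5/6$, $\bar\alpha_4\ge 11/12$ --- it contains no proof. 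What you give is a programme: take a minimal counterexample, strengthen the statement along the outer face, exclude a catalogue of reducible configurations, and finish by discharging with charges $d_G(v)-6$ and $2\abs{f}-6$. Every step that would carry mathematical weight is left unspecified: no strengthened induction statement is formulated, no reducible configuration is exhibited or verified, and no discharging rules are given, let alone shown to produce nonnegative total charge in the absence of those configurations. You acknowledge this yourself when you write that assembling a sufficient set of configurations is ``where the real difficulty lies''; that difficulty is precisely the content of the conjecture, so the proposal as written establishes nothing beyond what the paper already states.

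Two further cautions about the sketch itself. The remark that for $d=2$ ``only a further $1/6$ must be extracted'' beyond the greedy $1/2$ bound has no force: the $1/2$ bound comes from a global $2$-colouring argument, and there is no additive mechanism by which an extra $n/6$ vertices can be ``extracted'' from it; any improvement would have to come from an entirely new argument. Likewise, the alternative partition statement (splitting $V(G)$ into $3$, $6$, or $12$ parts so that deleting any one part leaves a $d$-degenerate graph) is strictly stronger than the conjecture and is itself unsupported; proposing it does not reduce the difficulty. So the verdict is that there is a genuine gap --- indeed the whole lower-bound argument is missing --- and the statement remains, as in the paper, a conjecture rather than a theorem.
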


The problem of colouring the vertices of a planar graph $G$
so that colour classes induce certain degenerate subgraphs has been studied in many papers. Borodin~\cite{borodin} proved that every planar graph $G$ is acyclically $5$-colourable, meaning that $V(G)$ can be coloured in $5$ colours so that a subgraph of $G$ induced by each colour class is $0$-degenerate and a subgraph of $G$ induced by the  union of any two colour classes  is $1$-degenerate. As a strengthening of this result, Borodin~\cite{borodin1976} conjectured that every planar graph has degenerate chromatic number at most $5$, which means that the vertices of any planar graph $G$ can be coloured in $5$ colours so that for each $i\in\{1,2,3,4\}$, a subgraph of $G$ induced by the union of any $i$ colour classes is $(i-1)$-degenerate.
This conjecture remains open, but it was proved in  \cite{KMSYZ2009}
that the list degenerate chromatic number of a graph is bounded by its $2$-colouring number, and it was proved in \cite{DKK2015} that the
$2$-colouring number of every planar graph is at most $8$. 
As consequences of the above conjecture, Borodin posed two other weaker conjectures:  (1) Every planar graph has a  vertex partition into two sets such that one induces a $2$-degenerate graph and the other induces a forest. (2) Every planar graph has a  vertex partition  into an independent set and a set inducing a $3$-degenerate graph.  
	Thomassen  confirmed these conjectures in \cite{thomassen1995} and \cite{tomassen2001}. 
	\medskip

This paper is organized as follows.
In Section~\ref{sec:notation} we will present our notation.
In Section~\ref{sec:main} we will  formulate a stronger theorem that allows us to apply induction.
This will involve identifying numerous obstructions to a more direct proof. 
In Section~\ref{sec:setup}, we will organize our proof by contradiction 
around the notion of an \emph{extreme counterexample}. 
In Sections~\ref{sec:separating}--\ref{sec:triangle}, 
we will develop properties of extreme counterexamples that eventually lead to a contradiction in Section~\ref{sec:proof}.

\section{Notation}\label{sec:notation}
For sets $X$ and $Y$, define %
  $Z=X\cupdot Y$ to mean $Z=X\cup Y$ and $X\cap Y=\0$.  
Let $G=(V,E)$ be a graph with $v,x,y\in V$ and $X, Y \subseteq V$. Then $\|v,X\|$ is the number of edges incident with $v$ and a vertex in $X$ and 
$
\|X,Y\|=\sum_{v\in X}\|v, Y\|
$.
When $X$ and $Y$ are disjoint, $\|X,Y\|$ is the number of edges $xy$ 
with $x\in X$ and  $y\in Y$. 
 In general, edges in $X\cap Y$ are counted twice by $\|X,Y\|$. Let $N(X)=\bigcup_{x\in X}N(x)-X$. 
 
 We write $H\sub G$ to indicate that $H$ is a subgraph of $G$. The subgraph of $G$ induced by a vertex set $A$ is denoted by $G[A]$. 
  The path $P$ with $V(P)=\{v_1,\ldots,v_n\}$ and $E(P)=\{v_1v_2,\ldots,v_{n-1}v_n\}$ is denoted by $v_1\cdots v_n$.  Similarly the cycle $C=P+v_nv_1$ is denoted by $v_1\cdots v_nv_1$. 

Now let $G$ be a simple connected plane graph. The boundary of the infinite face is denoted by $\mathbf{B}=\mathbf{B}(G)$ and $V(\mathbf{B}(G))$ is denoted by $B=B(G)$.  Then $\mathbf B$ is a subgraph of the outerplanar graph $G[B]$. For a %
cycle  $C$ in $G$, let $\inn_G[C]$ denote the subgraph of $G$ obtained by removing all exterior vertices and edges 
and let $\ext_G[C]$ be the subgraph of $G$ obtained by removing all interior vertices and edges. Usually the graph $G$ is clear from the text,  and we write $\inn[C]$ and $\ext[C]$ for $\inn_G[C]$ and $\ext_G[C]$. 
Let $\inn (C) = \inn [C] -V(C)$ and $\ext (C) = \ext[C]-V(C)$. Let 
 $N^{\circ}(x)=N(x)-B$ and $N^{\circ}(X)=N(X)-B$.

For $L\in \Pi$, the \emph{up-set} of $x$ in $L$ is defined as $U_L(x)=\{y\in V:y>_Lx\}  $ and the \emph{down-set} of $x$ in $L$ is defined as $D_L(x)=\{y\in V:y<_Lx\}  $.
Note that for each $L\in \Pi$, $y<_L x$ means that $y\le_L x$ and $y\neq x$.
For two sets $X$ and $Y$, we say $X\le_L Y$ if $x\le_L y$ for all $x\in X$, $y\in Y$. 

\section{Main result}\label{sec:main}
 
In this section we phrase a stronger, more technical version of Theorem~\ref{main0} that is more amenable  to induction. This  is roughly analogous to the proof of the $5$-Choosability Theorem by Thomassen~\cite{tomassen1994}. 

If $G=G_1\cup G_2$ and $G_1\cap G_2=G[A]$ for a set $A$ of vertices, then we would like to join two $3$-degenerate subgraphs obtained from $G_1$ and $G_2$ by induction to form a $3$-degenerate subgraph of~$G$. The problem is that vertices from~$A$ may have neighbours in both subgraphs.
Dealing with this motivates the following definitions.

\medskip

Let $A\subseteq V(G)$.
A subgraph $H$ of $G$ is \emph{\dg{k}{A}}
if there exists an ordering $L\in \Pi(G)$ such that
$A\le_L V-A$ and $d_H^L(v)\le k$ for every vertex $v\in V(H)-A$.
Equivalently, every subgraph $H'$ of $H$ with $V(H')- A\neq \emptyset$
has a vertex $v \in V(H')-A$ such that $d_{H'}(v)\le k$.
A subset $Y$ of $V$ is \emph{$A$-good} if $G[Y]$    is  \dg{3}{A}. 
We say a subgraph $H$ is $A$-good if $V(H)$ is $A$-good.
Thus if $A =\emptyset$ then $G$ is $A$-good if and only if $G$ is $3$-degenerate. 
Let
\[f(G; A) = \max\{\abs{Y}: ~ Y\subseteq V(G) \text{ is $A$-good} \}.\]
Since $\emptyset$ is $A$-good, $f(G;A)$ is well defined.

For an induced subgraph $H$ of $G$ and a set $Y$ of vertices of $H$, we say $Y$ is \emph{collectable} in $H$ if the vertices of $Y$ can be ordered as $y_1,y_2,\ldots,y_k$ such that for each $i\in \{1,2,\ldots,k\}$, 
either $y_i \notin A$ and $d_{H-\{y_1,y_2,\ldots,y_{i-1}\}}(y_i)\le 3$
or $V(H)-\{y_1,y_2,\ldots,y_{i-1}\} \subseteq A$.

In order to  build an $A$-good subset, we typically apply a sequence of operations of deleting and collecting.
\emph{Deleting $X \subset V$}  means   replacing $G$ with  $G-X$. 
An ordering witnessing that $Y$ is collectable is called a \emph{collection}  order. 
For disjoint subsets $V_1, \dots, V_s$ of $V$, if $V_{i}$ is collectable in $G-\bigcup_{j=1}^{i-1} V_j$ for each $i=1,2,\ldots,s$, then 
\emph{collecting} $V_1,\dots, V_s$ means first putting $V_1$ at the end of $L$ in a collection order for $V_1$, then putting $V_2$ at the end of $L-V_1$ in a collection order for $V_2$ in $G-V_1$, etc.  Note that if $Y$ is a collectable set in $G$ and $V-Y$ is $A$-good, then $V$ is $A$-good.

\begin{definition}\label{admissible}
	A path $v_1v_2\ldots v_\ell$ of a plane graph $G$ is \emph{admissible}
	if $\ell>0$ and it is a path in $\mathbf B(G)$
	such that for each $1<i<\ell$, $G-v_i$ has no path from $v_{i-1}$ to $v_{i+1}$.
\end{definition}
 
A path of length $0$ has only $1$ vertex in its vertex set.

\begin{definition}\label{usable}
	A set $A$ of vertices of a plane graph $G$ is \emph{usable} in $G$ if
	for each component $G'$ of $G$, $A\cap V(G')$ is the empty set or
	the vertex set of an admissible path of $G'$.
\end{definition}

\begin{lemma}\label{Latmost2}
Let $G$ be a plane graph and let $A$ be a usable set in $G$. Then for each vertex $v$ of $G$, $|N_G(v) \cap A| \le 2$. 
\end{lemma}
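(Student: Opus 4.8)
The plan is to reduce to a single admissible path and then exploit its defining ``no detour'' property. Since $A$ is usable and any vertex $v$ lies in exactly one component $G'$ of $G$, only $A\cap V(G')$ is relevant to $|N_G(v)\cap A|$, so we may assume $G$ is connected and $A=V(P)$ where $P=v_1v_2\cdots v_\ell$ is an admissible path. If $\ell\le 2$ there is nothing to prove, so assume $\ell\ge 3$ and suppose toward a contradiction that some vertex $v$ has three distinct neighbours in $A$. We argue by cases on whether $v\in A$; in both cases the idea is to produce a path of $G$ that ``detours'' around some interior vertex $v_t$ of $P$ (that is, one with $1<t<\ell$), contradicting Definition~\ref{admissible}.

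\emph{Case $v\notin A$.} Pick neighbours $v_i,v_j,v_k$ of $v$ with $i<j<k$; since $i\ge 1$ and $k\le\ell$ we have $1<j<\ell$. Concatenating the subpath of $P$ from $v_{j-1}$ back to $v_i$, the two edges $v_iv$ and $vv_k$, and the subpath of $P$ from $v_k$ back to $v_{j+1}$, we obtain a walk from $v_{j-1}$ to $v_{j+1}$. All of its $P$-vertices have index $<j$ or $>j$, and $v\neq v_j$ (there are no loops), so the walk avoids $v_j$ and hence contains a path from $v_{j-1}$ to $v_{j+1}$ in $G-v_j$. This contradicts the admissibility of $P$ at the interior vertex $v_j$.

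\emph{Case $v\in A$, say $v=v_m$.} Since $v_m$ has at least three neighbours in $A=\{v_1,\dots,v_\ell\}$ but only $v_{m-1}$ and $v_{m+1}$ can be neighbours of $v_m$ with index in $\{m-1,m,m+1\}$, some neighbour $v_p\in A$ of $v_m$ has $p\notin\{m-1,m,m+1\}$; replacing $P$ by its reverse if necessary, we may assume $p\le m-2$. As $p\ge 1$ this gives $m\ge 3$, so $1<m-1<\ell$ and $v_{m-1}$ is an interior vertex of $P$. Now the subpath of $P$ from $v_{m-2}$ back to $v_p$ followed by the edge $v_pv_m$ is a walk from $v_{m-2}$ to $v_m$ all of whose $P$-vertices have index at most $m-2$, so it avoids $v_{m-1}$; hence $G-v_{m-1}$ has a path from $v_{m-2}$ to $v_m$, contradicting the admissibility of $P$ at $v_{m-1}$.

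I do not anticipate a genuine obstacle: planarity enters only implicitly, through the definition of an admissible path (that $P$ lies on $\mathbf B(G)$), and the argument uses nothing beyond the detour property. The only point requiring care is index bookkeeping — in the degenerate situations $i=j-1$, $k=j+1$, or $p=m-2$ a ``subpath of $P$'' collapses to a single vertex — and checking that the vertex $v_t$ whose admissibility we invoke really satisfies $1<t<\ell$, which is precisely what the inequalities $i\ge 1$, $k\le\ell$, and $p\ge 1$ supply.
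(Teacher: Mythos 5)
Your proof is correct, and it is simply a careful, fully written-out verification of what the paper dismisses in one line (``clear from the definition of an admissible path''): in both cases you reduce to the detour property at an interior vertex $v_j$ (resp.\ $v_{m-1}$) of the admissible path, which is exactly the intended argument. The index bookkeeping, the use of path reversal in the case $v\in A$, and the handling of the degenerate one-vertex subpaths are all sound.
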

\begin{proof}
    This is clear from the definition of an admissible path. 
\end{proof}

\begin{observation}\label{ob0}	
	If $G$ is outerplanar and $A$ is a usable set in $G$, then $G$ is \dg{2}{A}.
\end{observation}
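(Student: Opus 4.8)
The plan is to verify the equivalent form of $(2,A)$-degeneracy recorded just after the definition: every subgraph $H'$ of $G$ with $V(H')-A\neq\emptyset$ contains a vertex $v\in V(H')-A$ with $d_{H'}(v)\le 2$. Passing from $H'$ to the induced subgraph on $V(H')$ only raises degrees, and replacing $V(H')$ by $V(H')\cup A$ raises them further while leaving $V(H')-A$ unchanged; so it is enough to prove that for every $W$ with $A\subseteq W\subseteq V(G)$ and $W\neq A$ the graph $G[W]$ has a vertex $v\in W-A$ with $d_{G[W]}(v)\le 2$. Here $G[W]$ is again outerplanar (the inherited drawing keeps all vertices on the outer face), and $A$ stays usable in $G[W]$: admissibility is inherited, because the absence of a $v_{i-1}$–$v_{i+1}$ path in $G-v_i$ passes to every subgraph, and an edge bordering the outer face of $G$ still borders the outer face of any induced subgraph containing it. Taking a component $C$ of $G[W]$ that meets $W-A$ (so $A\cap V(C)$ is empty or the vertex set of an admissible path of $C$), a degree-$\le 2$ vertex of $C$ outside $A$ finishes the job. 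Thus everything reduces to the following \emph{core claim}: if $G$ is connected and outerplanar, $A$ is empty or the vertex set of an admissible path $v_1\cdots v_\ell$, and $V(G)-A\neq\emptyset$, then some $v\in V(G)-A$ has $d_G(v)\le 2$.

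I would prove the core claim by induction on $\abs{V(G)}$, with one auxiliary input: every connected outerplanar graph on at least two vertices has at least two vertices of degree at most $2$ (a standard fact, via the weak dual of a $2$-connected outerplanar graph being a tree together with the block–cut tree). If $\ell\le 1$, this input immediately yields a degree-$\le 2$ vertex avoiding the set $A$ of size at most $1$. If $\ell\ge 2$ and $G$ has a cut vertex $c$, there are two subcases. If $A-c$ lies in a single component of $G-c$ — which happens whenever $c\notin A$ or $c\in\{v_1,v_\ell\}$ — then the union of the remaining components with $c$ is a strictly smaller connected outerplanar graph $G'$ with $\abs{A\cap V(G')}\le 1$, and the auxiliary input gives $G'$ two degree-$\le 2$ vertices, one of which differs from $c$, hence lies outside $A$ and has the same degree in $G$. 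Otherwise $c=v_k$ for some $1<k<\ell$, and $G$ decomposes as $G_L\cup G_R$ along $v_k$ with $A\cap V(G_L)=\{v_1,\dots,v_k\}$ and $A\cap V(G_R)=\{v_k,\dots,v_\ell\}$, both still admissible in the respective strictly smaller pieces; the given vertex of $V(G)-A$ lies in one piece, where induction supplies a degree-$\le 2$ vertex $\neq v_k$, again of the same degree in $G$ and outside $A$. Since an admissible path with $\ell\ge 3$ forces $v_2$ to be a cut vertex, the only case left is $\ell=2$ with $G$ being $2$-connected, and I expect this to be the real difficulty.

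In that case $\mathbf{B}(G)$ is a Hamilton cycle $C$, $v_1v_2\in E(C)$, and $\abs{V(G)}\ge 3$; if $G=C$ we are done at once, so assume $G$ has a chord. The inner faces of $G$, adjacent when they share a chord, form a tree $T$ with at least two nodes; each leaf face of $T$ is bounded by a single chord together with a path of $C$-edges of length at least $2$ (the endpoints of a chord are non-adjacent on $C$), whose interior vertices have degree exactly $2$, and distinct leaf faces yield distinct interior vertices. Assuming for contradiction that every degree-$2$ vertex lies in $\{v_1,v_2\}$: three leaf faces would give three distinct degree-$2$ vertices, so $T$ has exactly two leaves and hence is a path; and if either of its two leaf faces had a $C$-path of length $\ge 3$ we would again get three degree-$2$ vertices, so each leaf face is bounded by a chord and a path of length exactly $2$. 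The two resulting interior vertices $a,a'$ are distinct with $\{a,a'\}=\{v_1,v_2\}$, yet the two neighbours of $a$ are the endpoints of a chord and hence have degree $\ge 3$, so $aa'\notin E(G)$ — contradicting $v_1v_2\in E(C)$. Hence some degree-$2$ vertex avoids $A$, which completes the core claim and the Observation. Apart from this $2$-connected base case, the only points needing care are the verifications that admissibility and usability are genuinely inherited by the induced subgraphs and by the pieces $G_L,G_R$; the degree bookkeeping after each split (a chosen vertex differs from the gluing vertex, so its degree in $G$ equals its degree in the piece) is routine.
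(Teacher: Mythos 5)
The paper states this Observation without any proof (it is treated as routine), so there is no written argument of the authors to compare against; judged on its own, your proof is correct and complete. Your reduction to the hereditary characterization is sound: it suffices to find, in each induced subgraph $G[W]$ with $A\subseteq W$ and $W\neq A$, a vertex of $W-A$ of degree at most $2$, and you correctly verify that outerplanarity, admissibility (both the boundary condition, since the outer face only grows under deletion, and the separation condition, which is monotone under taking subgraphs) and hence usability pass to $G[W]$ and to its components. The core claim is then handled by a clean induction: the case $|A|\le 1$ via the standard fact that a connected outerplanar graph on at least two vertices has two vertices of degree at most $2$; the cut-vertex cases via the split at $c$, where your bookkeeping (the chosen vertex differs from the gluing vertex, so its degree is unchanged, and the subpaths $v_1\cdots v_k$, $v_k\cdots v_\ell$ remain admissible in the pieces) is accurate; and the only remaining case, $G$ $2$-connected with $A$ the two ends of a boundary edge, via the weak-dual tree: each leaf face contributes a degree-$2$ vertex interior to its boundary path, forcing either three such vertices or two nonadjacent ones, contradicting $v_1v_2\in E(\mathbf B)$. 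The few facts you quote without proof (the two-vertices-of-degree-$\le 2$ fact, the weak dual of a $2$-connected outerplanar graph being a tree, a leaf face having exactly one chord on its boundary) are all standard and true, and are at an acceptable level of detail for a statement the paper itself leaves unproved; if anything, your argument is more elaborate than the authors presumably had in mind, but it is a valid and self-contained justification of the Observation.
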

 
Observation \ref{ob0} motivates the expectation that plane graphs with large boundaries have large $3$-degenerate induced subgraphs.
Roughly, we intend to prove that $f(G;A)\le 3\abs{V(G)}/4+\abs{B}/4$. This formulation provides a potential function for measuring progress as we collect and delete vertices. 
For example, deleting a boundary vertex with at least four interior neighbours 
provides a smaller graph whose potential is at least as large. 
Some of the bonus $\abs{B}/4$ is needed for dealing with chords. 
But this does not quite work; $C_6^2$ is a counterexample, and there are 
infinitely many more. The rest of this section is devoted to formulating a more refined potential function.

A set $Z$ of vertices is said to be \emph{exposed} if $Z\sub B$.  
We say that a vertex $z$ is \emph{exposed} if $\{z\}$ is exposed.
We say that deleting $Y$ and collecting $X$ \emph{exposes} $Z$ if $Z\sub B(G-Y-X)-B$.   

\begin{definition}
	\label{def-special}
	Let 
	$\mathcal{Q}=\{Q_{1},Q_{2},Q_{2}^{+},Q_{3}, Q_{4},Q_{4}^{+},Q_{4}^{++}\}$ be the set of plane graphs shown in Figure~\ref{octahedron}. 
	For a plane graph $G$, a cycle  $C$ of $G$ is \emph{special} if $G_{C}:=\inn_{G}[C]$ is isomorphic to a plane graph in $\mathcal Q$, where $C$ corresponds to the boundary. 
	In this case, $G_{C}$ is also \emph{special}. 
\end{definition}	
	For a special cycle $C$ of a plane graph $G$, we define 
	\begin{align*}
		T_{C}&:=\inn_{G}(C), \text{ which is isomorphic to $K_3$,}\\
		X_{C}&:=\{v\in V(C):  \text{there is a facial cycle $D$   
		such that}\\
		&\qquad\qquad\qquad\qquad \text{$v\in V(C)\cap V(D)$ and $\abs{V(T_C)\cap V(D)}=2$}\},\\
		V_{C}&:=V(G_{C}),~ Y_C:=X_C \cup V(T_C), \text{ and }\ol Y_{C}:=V_{C}-Y_{C}=V(C)-X_C.
	\end{align*}
	Then $V(C)=X_{C}\cup\ol Y_{C}$.

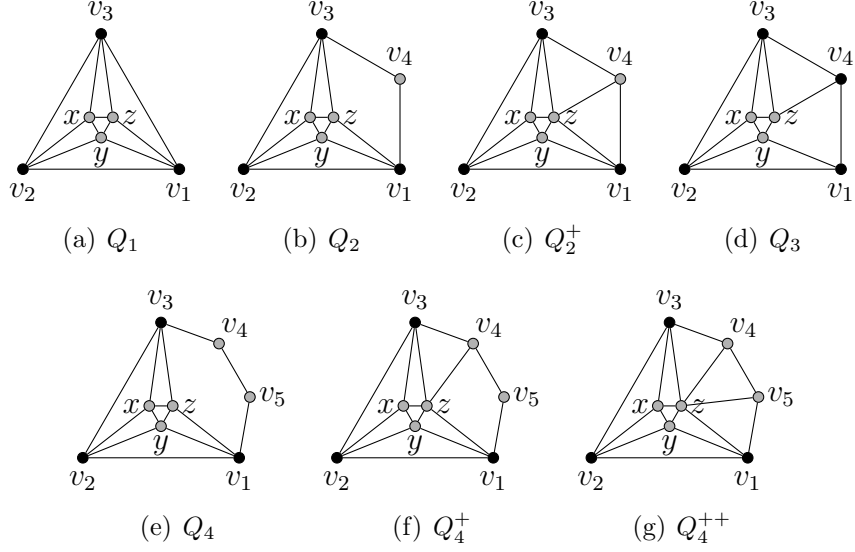
\begin{figure}
	\centering
	\tikzstyle{xc}=[circle,draw,fill=black,inner sep=0pt,minimum width=4pt]
	\tikzstyle{every node}=[circle,draw,fill=black!30,inner sep=0pt,minimum width=4pt]
	\subfigure[{$Q_1$}]{
	   \label{q1} %
	   \begin{tikzpicture}[scale=1.2]
	   \node [xc,label=$v_3$] at (90:1) (x) {};
	   \node [xc,label=below:$v_2$] at (210:1)(y) {};
	   \node [xc,label=below:$v_1$] at (330:1)(z) {};
	   \node  [label=left:$x$] at (150:.15) (v0) {};
	   \node  [label=below:$y$] at (150+120:.15) (v1) {};
	   \node  [label=right:$z$] at (150+240:.15) (v2) {};
	   \draw (v1)--(v2)--(v0)--(v1);
	   \draw (v0)--(y)--(v1)--(z)--(v2)--(x)--(v0);
	   \draw (x)--(y)--(z)--(x);
	   \end{tikzpicture}
   }
   \subfigure[$Q_2$]{
	   \label{q2} %
	   \begin{tikzpicture}[scale=1.2]
	   \tikzstyle{s}=[rectangle,draw,fill=black,inner sep=0pt,minimum width=4pt,minimum height=4pt]
	   \node [xc,label=$v_3$] at (90:1) (x) {};
	   \node [xc,label=below:$v_2$] at (210:1)(y) {};
	   \node [xc,label=below:$v_1$] at (330:1)(z) {};
	   \node [label=$v_4$] at (30:1) (u){};
	   \node  [label=left:$x$] at (150:.15) (v0) {};
	   \node  [label=below:$y$] at (150+120:.15) (v1) {};
	   \node  [label=right:$z$] at (150+240:.15) (v2) {};				\draw (v1)--(v2)--(v0)--(v1);
	   \draw (v0)--(y)--(v1)--(z)--(v2)--(x)--(v0);
	   \draw (x)--(y)--(z);
	   \draw (x)--(u)--(z);
	   \end{tikzpicture}
   }
   \subfigure[$Q_2^+$]{
	   \label{q2+} %
	   \begin{tikzpicture}[scale=1.2]
	   \tikzstyle{s}=[rectangle,draw,fill=black,inner sep=0pt,minimum width=4pt,minimum height=4pt]
	   \node [xc,label=$v_3$] at (90:1) (x) {};
	   \node [xc,label=below:$v_2$] at (210:1)(y) {};
	   \node [xc,label=below:$v_1$] at (330:1)(z) {};
	   \node [label=$v_4$] at (30:1) (u){};
	   \node  [label=left:$x$] at (150:.15) (v0) {};
	   \node  [label=below:$y$] at (150+120:.15) (v1) {};
	   \node  [label=right:$z$] at (150+240:.15) (v2) {};				\draw (v1)--(v2)--(v0)--(v1);
	   \draw (v0)--(y)--(v1)--(z)--(v2)--(x)--(v0);
	   \draw (x)--(y)--(z);
	   \draw (x)--(u)--(z);
	   \draw (u)--(v2);
	   \end{tikzpicture}
   }
   \subfigure[$Q_3$]{
	   \label{q3} %
	   \begin{tikzpicture}[scale=1.2]
	   \tikzstyle{s}=[rectangle,draw,fill=black,inner sep=0pt,minimum width=4pt,minimum height=4pt]
	   \node [xc,label=$v_3$] at (90:1) (x) {};
	   \node [xc,label=below:$v_2$] at (210:1)(y) {};
	   \node [xc,label=below:$v_1$] at (330:1)(z) {};
	   \node [xc,label=$v_4$] at (30:1) (u){};
	   \node  [label=left:$x$] at (150:.15) (v0) {};
	   \node  [label=below:$y$] at (150+120:.15) (v1) {};
	   \node  [label=right:$z$] at (150+240:.15) (v2) {};
	   \draw (v1)--(v2)--(v0)--(v1);
	   \draw (v0)--(y)--(v1)--(z);
	   \draw (v2)--(x)--(v0);
	   \draw (x)--(y)--(z);
	   \draw (x)--(u)--(z);
	   \draw (v2)--(u);
	   \end{tikzpicture}
   } \\
	   \subfigure[$Q_4$]{
		   \label{q4} %
		   \begin{tikzpicture}[scale=1.2]
		   \node [xc,label=$v_3$] at (90:1) (x) {};
		   \node [xc,label=below:$v_2$] at (210:1)(y) {};
		   \node [xc,label=below:$v_1$] at (330:1)(z) {};
		   \node [label=right:$v_5$] at (10:1)(v) {};
		   \node [label=above right:$v_4$] at (50:1) (u){};
		   \node  [label=left:$x$] at (150:.15) (v0) {};
		   \node  [label=below:$y$] at (150+120:.15) (v1) {};
		   \node  [label=right:$z$] at (150+240:.15) (v2) {};
		   \draw (v1)--(v2)--(v0)--(v1);
		   \draw (v0)--(y)--(v1)--(z)--(v2)--(x)--(v0);
		   \draw (x)--(y)--(z);
		   \draw (x)--(u)--(v)--(z);
		   \end{tikzpicture}
	   }
	   \subfigure[$Q_4^+$]{
		   \label{q4+} %
		   \begin{tikzpicture}[scale=1.2]
		   \node [xc,label=$v_3$] at (90:1) (x) {};
		   \node [xc,label=below:$v_2$] at (210:1)(y) {};
		   \node [xc,label=below:$v_1$] at (330:1)(z) {};
		   \node [label=right:$v_5$] at (10:1)(v) {};
		   \node [label=above right:$v_4$] at (50:1) (u){};
		   \node  [label=left:$x$] at (150:.15) (v0) {};
		   \node  [label=below:$y$] at (150+120:.15) (v1) {};
		   \node  [label=right:$z$] at (150+240:.15) (v2) {};
		   \draw (v1)--(v2)--(v0)--(v1);
		   \draw (v0)--(y)--(v1)--(z)--(v2)--(x)--(v0);
		   \draw (x)--(y)--(z);
		   \draw (x)--(u)--(v)--(z);
		   \draw (u)--(v2);
		   \end{tikzpicture}
	   }
	   \subfigure[$Q_4^{++}$ ]{
		   \label{q4++} %
		   \begin{tikzpicture}[scale=1.2]
		   \node [xc,label=$v_3$] at (90:1) (x) {};
		   \node [xc,label=below:$v_2$] at (210:1)(y) {};
		   \node [xc,label=below:$v_1$] at (330:1)(z) {};
		   \node [label=right:$v_5$] at (10:1)(v) {};
		   \node [label=above right:$v_4$] at (50:1) (u){};
		   \node  [label=left:$x$] at (150:.15) (v0) {};
		   \node  [label=below:$y$] at (150+120:.15) (v1) {};
		   \node  [label=right:$z$] at (150+240:.15) (v2) {};
		   \draw (v1)--(v2)--(v0)--(v1);
		   \draw (v0)--(y)--(v1)--(z)--(v2)--(x)--(v0);
		   \draw (x)--(y)--(z);
		   \draw (x)--(u)--(v)--(z);
		   \draw (u)--(v2)--(v);
		   \end{tikzpicture}
	   }
   \caption{Plane graphs in $\mathcal Q$ defining special subgraphs $G_C$ where $C$ corresponds to the boundary cycle.
   Solid black vertices denote vertices in $X_C$.}
   \label{octahedron} %
\end{figure}

\begin{observation}
	\label{obs1}
	Let $A$ be a usable set in a plane graph $G$.
	Let $C=v_{1}\dots v_{k}v_{1}$ be a special cycle of $G$.
	If $G_C$ is (not only isomorphic but also equal to a plane graph) in  $\mathcal Q$, then the following hold.
	\begin{enum}
		\item $T_{C}=xyzx$ with $N_G(x)=\{y,z,v_{2},v_{3}\}$ and $N_{G}(y)=\{x,z,v_{1},v_{2}\}$.
		\item $X_{C}=\{v_1,v_2,v_3\}$ if $G_C\neq Q_3$ and $X_C=\{v_1,v_2,v_3,v_4\}$ if $G_C=Q_3$.
	    \item Deleting any vertex in $X_C \cap B$ exposes two vertices of $T_C$.
		\item\label{obs1collect} For each vertex $v\in X_C$, $V(T_{C})$ is collectable in $G-v$, 
		except that if $G_{C}=Q_{4}^{++}$ and $v=v_{2}$ then only $\{x,y\}$ is collectable in $G-v$.
		\item\label{obs1uniq} If $\ol Y_{C}\ne\emptyset$ then 
		there is a facial cycle $C^*$ containing $\ol Y_C\cup \{v\}$ for some $v\in V(T_C)$. Moreover, $v=z$ is unique, and 
		if $|\ol Y_C|=2$, then $C^{*}$ is unique. 
	\item $T_C$ has at least two vertices $v$ such that $d_G(v)=4$.
	\end{enum}  
\end{observation}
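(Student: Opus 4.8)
The plan is a finite verification, member by member over $\mathcal Q$; since $G_C$ is assumed to \emph{equal} one of the seven labelled plane graphs in Figure~\ref{octahedron}, every assertion can be read off once we record their common structure. In every $G_C\in\mathcal Q$ the interior triangle is $T_C=xyzx$ with $N_{G_C}(x)=\{y,z,v_2,v_3\}$ and $N_{G_C}(y)=\{x,z,v_1,v_2\}$, while $N_{G_C}(z)=\{x,y,v_3\}\cup S$, where $S=\{v_1\}$ for $Q_1,Q_2,Q_4$, $S=\{v_1,v_4\}$ for $Q_2^+,Q_4^+$, $S=\{v_1,v_4,v_5\}$ for $Q_4^{++}$, and $S=\{v_4\}$ for $Q_3$. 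Because $x,y,z$ are interior to $C$ they lie strictly inside $C$, so $N_G(\cdot)=N_{G_C}(\cdot)$ for each of them and $\{x,y,z\}\cap B(G)=\emptyset$; this is exactly (a), and since $|N_G(x)|=|N_G(y)|=4$ it also gives (f), with $x$ and $y$ as the two vertices.

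For (b) I would run through the faces of $G_C$ meeting an edge of $T_C$: the face bounded by $T_C$ has no $v_i$ on it; the faces across $xy$ and $xz$ are the triangles $xyv_2$ and $xzv_3$ in every member, so $v_2,v_3\in X_C$; and the face across $yz$ is the triangle $yzv_1$ in every member except $Q_3$, where it is instead the $4$-face $yzv_4v_1$. Hence $v_1\in X_C$ always, and also $v_4\in X_C$ precisely when $G_C=Q_3$. A quick check shows the remaining boundary vertices ($v_4$, and also $v_5$ for $Q_4,Q_4^+,Q_4^{++}$) lie only on the outer face and on faces whose intersection with $T_C$ is at most $\{z\}$, so $X_C=\{v_1,v_2,v_3\}$ except that $X_C=\{v_1,v_2,v_3,v_4\}$ for $Q_3$.

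Claims (c) and (d) then follow uniformly. For (c): if $v\in X_C\cap B$ then $v$ is on the outer face and, by (b), on a face $F$ of $G_C$ whose boundary contains two vertices $a,b$ of $T_C$; deleting $v$ merges $F$ into the outer face, so $a,b\in B(G-v)$, and $a,b\notin B(G)$ since $\{x,y,z\}\cap B(G)=\emptyset$. For (d): note first that $x,y,z\notin A$, so the ``$\notin A$'' clause in the definition of collectable plays no role here. After deleting $v\in X_C$ one may try to collect in an order ending at $z$: if $v\in\{v_2,v_3\}$ take $x$, then $y$, then $z$; if $v=v_1$ take $y$, then $x$, then $z$; if $v=v_4$ (possible only for $Q_3$) take $z$, then $x$, then $y$. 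Using $N_G(x)=\{y,z,v_2,v_3\}$ and $N_G(y)=\{x,z,v_1,v_2\}$ one checks that each of the first two vertices has degree at most $3$ when collected; and when $z$ is collected last it has lost $x$, $y$, and possibly $v$, leaving $d_G(z)-2$ or $d_G(z)-3$ neighbours, which is at most $3$ unless $d_G(z)=6$ and $v\notin N_G(z)$ --- i.e.\ unless $G_C=Q_4^{++}$ and $v=v_2$, since $v_2$ is the unique member of $X_C$ absent from $N_G(z)$. In that single case $z$ keeps the four neighbours $v_1,v_3,v_4,v_5$ after collecting $x$ and $y$, so only $\{x,y\}$ is collectable; otherwise $V(T_C)$ is.

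Finally (e) is a short per-graph check over the five members with $\ol Y_C\ne\emptyset$, namely $Q_2,Q_2^+$ with $\ol Y_C=\{v_4\}$ and $Q_4,Q_4^+,Q_4^{++}$ with $\ol Y_C=\{v_4,v_5\}$. In each, $\ol Y_C\cup\{z\}$ lies on a single face of $G_C$ (a $4$-face containing $z$ for $Q_2$; a triangle $zv_3v_4$ or $zv_4v_1$ for $Q_2^+$; the $5$-face $v_4v_5v_1zv_3$ for $Q_4$; a $4$-face $zv_4v_5v_1$ for $Q_4^+$; the triangle $zv_4v_5$ for $Q_4^{++}$), and $z$ is the only vertex of $T_C$ on any face meeting $\ol Y_C$, which forces $v=z$. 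When $|\ol Y_C|=2$ the edge $v_4v_5$ lies on $\mathbf B(G_C)$ and hence on exactly one interior face, which must then be $C^*$, so $C^*$ is unique. The whole argument is routine case analysis; the only delicate point is the exception in (d), where it is precisely the large degree of $z$ in $Q_4^{++}$, together with $v_2\notin N_G(z)$, that obstructs collecting $z$.
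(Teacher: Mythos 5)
Your case-by-case verification is correct, and it is essentially the paper's (implicit) argument: the paper states this as an Observation to be checked by inspection of the seven graphs in Figure~\ref{octahedron}, which is exactly the finite check you carry out, including the correct identification of the $Q_4^{++}$, $v=v_2$ exception via $d_G(z)=6$ and $v_2\notin N_G(z)$.
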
  

Note that vertices on $C$ may have neighbours in $\ext(C)$ or maybe contained in $A$. Thus we may not be able to collect vertices of $C$.

A special cycle $C$ is called \emph{exposed} if $X_C \subseteq   B(G)$.
A \emph{special cycle packing} of $G$ is a set of 
exposed special cycles $\{C_1, \ldots, C_m\}$ such that $Y_{C_i}\cap Y_{C_j}=\emptyset$ for all $i\neq j$. 
Let 
$\tau(G)$ be the maximum cardinality of a special cycle packing
and 
\[\partial(G) =  \frac{3}{4}|V(G)|+\frac{1}{4}(|B| -  \tau(G)).\]
We say that a special cycle packing of $G$ is \emph{optimal} if its cardinality is equal to $\tau(G)$.
 
\begin{theorem}\label{main}
    For all  plane graphs $G$ and usable sets $A\subseteq B(G)$,
    \begin{equation}\label{mb}
    f(G;A)  \ge \partial(G).
    \end{equation}
\end{theorem}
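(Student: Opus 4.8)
The plan is to prove Theorem~\ref{main} by induction on $\abs{V(G)}$, working with an \emph{extreme counterexample}: a pair $(G,A)$ violating \eqref{mb} with $\abs{V(G)}$ minimum, then $\|G\|$ maximum, and then $\abs A$ maximum. First I would clear away the easy cases in Section~\ref{sec:setup}. If $G$ is disconnected or has a cut vertex, split $G$ along the separation: the restriction of $A$ to each side is still usable (an admissible path lives in a single block, by Definition~\ref{admissible}), induction applies to each smaller piece, and the resulting $A$-good sets recombine, provided one checks that $\abs V$, $\abs B$, and an optimal special cycle packing all split additively so that the bounds add up. If some interior vertex $v\notin A\cup B$ has $\abs{N(v)}\le 3$, delete $v$, apply induction to $G-v$ (with the same $A$, still usable), and return $v$ to the end of the witnessing order: its back-degree is at most $3$, so the set stays $A$-good, while $\partial$ has dropped by at least $3/4$, a contradiction. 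More generally, any interior vertex that becomes collectable after deleting a few cheap vertices is handled the same way. After these reductions one aims to assume $G$ is a $2$-connected near-triangulation with outer cycle $\mathbf B$; this last step needs care, since adding an interior chord can spoil the admissibility of $A$ or change $\tau(G)$, so such chords are added only when demonstrably harmless.

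The core of the argument is the chain of structural lemmas in Sections~\ref{sec:separating}--\ref{sec:triangle}. \emph{Separating cycles:} given a separating cycle $C$ of $G$, I would split $G$ into $\inn_G[C]$ and $\ext_G[C]$ and apply induction to each after absorbing $V(C)$ --- or rather a suitable admissible sub-path of it, which is where Lemma~\ref{Latmost2} and Observation~\ref{ob0} enter --- into the usable set, then glue the two $A$-good sets, which agree on $V(C)$, into one for $G$. The delicate point is the bookkeeping: $\partial(\inn_G[C])+\partial(\ext_G[C])$ must dominate $\partial(G)$ after correcting for the doubly counted vertices of $C$ and for any shift in the $\tau$-term, and an optimal special cycle packing of $G$ has to be split consistently between the two sides. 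The conclusion is that in the extreme counterexample the only short separating cycles are those bounding a special subgraph $G_C\in\mathcal Q$ --- whose effect is already absorbed by $\tau(G)$, which is precisely why that correction term is shaped as $\tfrac14(\abs B-\tau(G))$. \emph{Boundary degrees:} next (Section~\ref{sec:boundary}) one shows that a boundary vertex of small degree, or two consecutive low-degree boundary vertices, would let us delete a short sub-path of $\mathbf B$ and collect the interior vertices it exposes, passing to a strictly smaller instance; each deleted boundary vertex costs only $1$ in $\partial$ and each exposed interior vertex then costs only $3/4$, so the bounds add up and we win. \emph{Triangular boundary:} combining these forces $\mathbf B=v_1v_2v_3v_1$ to be a triangle in the extreme counterexample, so $A\subseteq\{v_1,v_2,v_3\}$; since $v_1v_3$ is an edge, no length-$2$ path on $\mathbf B$ is admissible, hence $\abs A\le 2$.

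It remains to finish when $G$ is a near-triangulation bounded by a triangle, with $A$ an admissible sub-path of $\mathbf B$, interior minimum degree $\ge 4$, no forbidden separating cycles, and $G$ itself not one of the (finitely many) graphs in $\mathcal Q$, which are disposed of by direct checks. Here (Section~\ref{sec:proof}) I would analyse the interior neighbourhoods of $v_1,v_2,v_3$, choose a small set $Y\subseteq B$ to delete together with the set $Z$ of interior vertices it exposes, and verify that $G-Y-Z$ is a smaller near-triangulation to which induction applies and that $\abs Y+\abs Z$, together with the $\tau$-slack, leave enough room --- the configurations where no such reduction exists being exactly the members of $\mathcal Q$. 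The main obstacle throughout is the accounting of the $\tau$-term: one must exhibit, at every reduction (splitting at a separating cycle, deleting a boundary path, exposing interior vertices), a special cycle packing of the smaller graph that matches up with one of $G$, so that the $\tfrac14(\abs B-\tau)$ correction never leaks; getting this invariant right, rather than any individual planar configuration, is the crux. A secondary difficulty --- exactly what Definition~\ref{admissible} and Lemma~\ref{Latmost2} are designed to control --- is that after each reduction the enlarged set $A$ (now carrying some former boundary vertices) must remain usable, i.e.\ an admissible path in each component.
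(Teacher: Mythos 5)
Your outline follows the same overall strategy as the paper: take an extreme counterexample, perform delete-and-collect reductions, establish $2$-connectedness, near-triangulation, boundary degree $5$, $\abs{B}=3$, and then a final neighbourhood analysis. But as written it is a roadmap rather than a proof, and the steps you defer are exactly the load-bearing ones. The central gap is the $\tau$-accounting that you yourself call the crux: you never show that deleting a connected set $X\cup Y$ changes the special-cycle packing number in a controlled way. The paper does this in Lemmas~\ref{lem-delta} and~\ref{lem-taug'} (at most one ``new'' type-c special cycle can appear, and new type-b special cycles are charged to newly exposed boundary vertices), and the derived inequality of Lemma~\ref{cor-red} is the workhorse behind every subsequent reduction. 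Without an argument of this kind, your repeated claim that ``each deleted boundary vertex costs $1$ and each exposed interior vertex costs $3/4$, so the bounds add up'' is unsupported, because $\tau$ may jump when vertices are deleted.

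Several specific assertions also run against how the argument must go. (a) An admissible path does not live in a single block: by Definition~\ref{admissible} its internal vertices are cut vertices, and in the cut-vertex reduction one must in fact \emph{enlarge} the usable set on one side (the paper takes $A_1=\{x\}$ when $x\notin A$) to glue the two good sets; maximality of $\abs{A}$ in the extremal choice is what later forces $\abs{A}=2$, which your ordering of the extremality criteria obscures. (b) The extreme counterexample does not retain special separating cycles ``absorbed by the $\tau$-term'': the paper proves it has none at all (Lemmas~\ref{lem-typec} and~\ref{lem-nospecialcycle}), and this fact is used repeatedly afterwards (e.g.\ to get $\tau=0$ for the pieces in Lemmas~\ref{cutvertex}, \ref{nochord}, \ref{cor-reducible}). (c) The near-triangulation step cannot be handled by ``adding chords only when demonstrably harmless'': edge-maximality forces $f(G+e;A)\ge\partial(G+e)$, so the only problematic case is precisely when adding $e$ creates an exposed special cycle, and eliminating that case is the entire content of Lemma~\ref{lem-neartriangulation}; similarly, the endgame is not ``the irreducible configurations are exactly $\mathcal Q$'' --- by that stage special cycles are excluded, and the contradiction is extracted from the $4$-cycle $xyzwx$ via the separating-cycle lemmas (Lemmas~\ref{sep-triangle} and~\ref{C4septri}) together with the eight structural claims of Lemma~\ref{property} needed to prove $\abs{B}=3$. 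None of this detailed work is replaceable by the general heuristics in your sketch, so the proposal as it stands has genuine gaps at each of these points.
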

 
 Clearly $|B| -\tau(G) \ge 2$ for any plane graph $G$ with at least $2$ vertices. 
 This is trivial if $\tau(G)=0$. If $\tau(G)=k$, then each of the $k$ exposed cycles in the maximum cardinality special cycle packing of $G$ has at least $3$ vertices in $B$ and therefore $|B|-\tau(G)\ge 2k\ge 2$.
 The following consequence of Theorem~\ref{main} is the main result of this paper.
 
 \begin{corollary}
 	Every  $n$-vertex planar graph $G$ (with $n \ge 2$) has an induced $3$-degenerate subgraph $H$ with $|V(H)| \ge (3n+2)/4$. 
 \end{corollary}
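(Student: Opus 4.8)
The plan is to derive the corollary directly from Theorem~\ref{main} by choosing a suitable plane embedding and the trivial usable set $A=\emptyset$. Given an $n$-vertex planar graph $G$ with $n\ge 2$, I would first fix an embedding of $G$ in the plane; since $A=\emptyset$ is vacuously usable (it meets every component in the empty set), Theorem~\ref{main} gives $f(G;\emptyset)\ge\partial(G)=\tfrac34 n+\tfrac14(|B|-\tau(G))$, and an $\emptyset$-good set is exactly the vertex set of an induced $3$-degenerate subgraph. So it suffices to show $|B|-\tau(G)\ge 2$ for the chosen embedding, which is precisely the inequality asserted (``Clearly'') just before the corollary.

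The key step is therefore the bound $|B|-\tau(G)\ge 2$. I would argue as follows. Recall a special cycle packing is a family $\{C_1,\dots,C_m\}$ of exposed special cycles with the sets $Y_{C_i}$ pairwise disjoint, where $Y_{C_i}=X_{C_i}\cup V(T_{C_i})$ and, by definition of ``exposed,'' $X_{C_i}\subseteq B$. Each cycle in $\mathcal Q$ has its solid vertices (those in $X_C$) on the boundary $C$, and inspecting Figure~\ref{octahedron} / Observation~\ref{obs1}(b) shows $|X_{C_i}|\ge 3$ for every special $C_i$ (it is $3$ for all types except $Q_3$, where it is $4$). Since the $X_{C_i}\subseteq B$ are pairwise disjoint (being subsets of the disjoint $Y_{C_i}$), summing gives $|B|\ge\sum_i|X_{C_i}|\ge 3m=3\tau(G)$ whenever $\tau(G)=m$; hence $|B|-\tau(G)\ge 2\tau(G)\ge 0$. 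This handles the case $\tau(G)\ge 1$, in fact giving much more than $2$ once $\tau(G)\ge 1$. For the remaining case $\tau(G)=0$, I only need $|B|\ge 2$, which is immediate: a connected plane graph on $\ge 2$ vertices has a boundary walk meeting at least two vertices, and if $G$ is disconnected one can still embed it so that $|B|\ge 2$ (e.g. place the components so at least two vertices lie on the outer face — or simply note every nonempty plane graph has $|B|\ge 1$ and a one-vertex outer boundary forces $n=1$). Combining, $|B|-\tau(G)\ge 2$ in all cases.

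Finally I would assemble the pieces: $f(G;\emptyset)\ge\tfrac34 n+\tfrac14(|B|-\tau(G))\ge\tfrac34 n+\tfrac14\cdot 2=(3n+2)/4$, and taking $H=G[Y]$ for an $\emptyset$-good set $Y$ with $|Y|=f(G;\emptyset)$ yields an induced $3$-degenerate subgraph with $|V(H)|\ge(3n+2)/4$, as required.

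The main obstacle is the little case analysis in establishing $|B|-\tau(G)\ge 2$ cleanly for \emph{all} plane graphs, including disconnected ones and the degenerate embeddings where the outer boundary is small; the substantive inequality $|X_C|\ge 3$ is just a reading of Figure~\ref{octahedron}, but one must be careful that ``exposed'' forces $X_C\subseteq B$ and that the disjointness of the $Y_{C_i}$'s really does give disjointness of the $X_{C_i}$'s inside $B$. Everything else is a direct substitution into Theorem~\ref{main}.
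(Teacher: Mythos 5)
Your proposal is correct and is essentially the paper's own argument: the paper derives the corollary from Theorem~\ref{main} together with the remark that $|B|-\tau(G)\ge 2$ for every plane graph on at least two vertices, which you justify (via $|X_C|\ge 3$, disjointness of the $Y_{C_i}$'s, and $|B|\ge 2$) rather than taking as ``clear.'' The only difference is that you spell out these routine details, including that $A=\emptyset$ is usable and that an $\emptyset$-good set induces a $3$-degenerate subgraph.
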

 
 \section{Setup of the proof}\label{sec:setup}

 Suppose Theorem~\ref{main} is not true.
 Among all counterexamples, choose $(G;A)$ so that
\begin{enum}
\item [(i)]  $|V(G)|$ is minimum,
\item [(ii)] subject to (i), $|A|$ is maximum, and
\item [(iii)] subject to (i) and (ii), $\abs{E(G)}$ is maximum.
\end{enum}
We say that such a counterexample is \emph{extreme}.

If $A'\nsubseteq V(G')$, then
we may abbreviate $(G';A'\cap V(G'))$ by $(G';A')$, but still (ii) refers to $|A'\cap V(G')|$.
We shall derive a sequence of properties of $(G;A)$
that leads to a contradiction.
Trivially $\abs{V(G)}>2$,  $G$ is connected (if $G$ is the disjoint union of $G_1$ and $G_2$, then 
$f(G;A) = f(G_1;A)+f(G_2;A)$ and $\partial(G)=\partial(G_1)+\partial(G_2)$).

  \begin{lemma}\label{lem:reduction}
  	Let $G$ be a plane graph and $X$ be a subset of $V(G)$.
  	If $A$ is usable in $G$, then
  	$A-X$ is usable in $G-X$.
  \end{lemma}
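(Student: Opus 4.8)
The plan is to unwind the definition of \emph{usable} componentwise. Every component $G''$ of $G-X$ is contained in a unique component $G'$ of $G$, and $G''=G[V(G'')]$ is an induced subgraph of $G'$ (since $G-X=G[V(G)\setminus X]$ and a component of it is again induced). We must show $(A-X)\cap V(G'')$ is empty or the vertex set of an admissible path of $G''$. If $A\cap V(G')=\emptyset$ we are done, so assume $A\cap V(G')=V(P)$ for an admissible path $P=v_1\cdots v_\ell$ of $G'$ and put $S:=(A-X)\cap V(G'')=V(P)\cap V(G'')\setminus X$; if $S=\emptyset$ we are again done, so assume $S\ne\emptyset$. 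It then suffices to prove: (a) the index set $\{i:v_i\in S\}$ is an interval $\{a,\dots,b\}$, none of whose vertices lies in $X$, so that $P':=v_a\cdots v_b$ is an honest subpath of $P$ contained in $G''$ with $V(P')=S$; (b) $P'$ is a path in $\mathbf B(G'')$; and (c) $P'$ satisfies the non-separation condition of Definition~\ref{admissible}.

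Step (a) is where admissibility of $P$ is actually used, via the convexity statement: if $v_a,v_b\in S$ with $a<b$, then $v_j\in S$ for all $a\le j\le b$. Suppose not. If some $v_m$ with $a<m<b$ lies in $X$, pick a path $Q$ from $v_a$ to $v_b$ inside the connected graph $G''\subseteq G'$; since $Q$ avoids $X$ it avoids $v_m$, and likewise the subpaths $v_{m-1}v_{m-2}\cdots v_a$ and $v_b v_{b-1}\cdots v_{m+1}$ of $P$ avoid $v_m$, so their concatenation is a walk in $G'-v_m$ joining $v_{m-1}$ to $v_{m+1}$; as $1<m<\ell$, this contradicts that $P$ is admissible in $G'$. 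Otherwise no $v_m$ with $a\le m\le b$ lies in $X$, so $v_a v_{a+1}\cdots v_b$ is a path in $G-X$ starting at $v_a\in V(G'')$; hence all its vertices lie in $G''$ and therefore in $S$, again contradicting the failure of convexity. Convexity makes $\{i:v_i\in S\}$ an interval $\{a,\dots,b\}$, and no $v_m$ with $a<m<b$ can be in $X$ (else $v_m\notin S$), while $v_a,v_b\notin X$; thus $P'=v_a\cdots v_b$ lies in $G''$ and has vertex set $S$.

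For (b) I would isolate a small topological fact: if $H''$ is a subgraph of a plane graph $H$ carrying the induced embedding, then the unbounded face of $H$ is contained in the unbounded face of $H''$ (deleting pieces of a plane graph only merges faces), so any vertex or edge of $H$ that lies on $\mathbf B(H)$ and survives in $H''$ also lies on $\mathbf B(H'')$. Applying this with $H=G'$ and $H''=G''$: each $v_i$ and each edge $v_iv_{i+1}$ of $P'$ lies on $\mathbf B(G')$ (because $P$ does) and survives in $G''$, hence lies on $\mathbf B(G'')$, so $P'$ is a path in $\mathbf B(G'')$. For (c), fix $a<i<b$, so $1<i<\ell$; since $G''-v_i$ is an induced subgraph of $G'-v_i$, any path from $v_{i-1}$ to $v_{i+1}$ in $G''-v_i$ would be one in $G'-v_i$, which admissibility of $P$ forbids. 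Hence $P'$ is admissible in $G''$ and $(A-X)\cap V(G'')=V(P')$; as $G''$ ranged over all components of $G-X$, $A-X$ is usable in $G-X$.

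The step most likely to need care is (b): once $G-X$ is disconnected, a component $G''$ may sit inside a bounded face of $G'$, so it is not a priori obvious that an edge on the outer boundary of $G'$ is still on the outer boundary of $G''$. The containment-of-unbounded-faces observation settles this, and it is the only point where genuine planarity (rather than purely combinatorial reasoning about paths) is invoked, so it is worth stating precisely rather than dismissing as routine; the convexity argument in (a), though conceptually the heart of the matter, is then just a short path-concatenation.
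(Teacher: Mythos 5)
Your proof is correct. It differs from the paper's in organization rather than in substance: the paper first reduces to the case that $G$ is connected and $X$ is a single vertex $v$ (implicitly inducting on $|X|$), dismisses the cases $v\notin A$ and $v$ an endpoint of the admissible path as trivial, and in the remaining case uses admissibility to conclude that $G-v_i$ is disconnected with $v_{i-1}$ and $v_{i+1}$ in distinct components, so each component inherits at most one of the two remaining subpaths. You instead handle an arbitrary $X$ in one pass, and the same property of admissible paths appears in contrapositive form as your convexity claim: if an internal $v_m\in X$ lay between two survivors in the same component of $G-X$, concatenating a connecting path with the two flanking subpaths of $P$ would give a $v_{m-1}$--$v_{m+1}$ path in $G'-v_m$, contradicting admissibility. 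What your write-up buys is completeness on exactly the points the paper waves through as ``trivial'': the observation that a component of $G-X$ is an induced subgraph, and especially the topological fact that the unbounded face of $G'$ is contained in the unbounded face of any plane subgraph, so vertices and edges of $P$ that survive remain on $\mathbf B(G'')$ even when their component sits inside what used to be a bounded region; the paper's shorter route buys brevity at the cost of leaving those verifications (and the iteration over $X$) to the reader.
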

  \begin{proof}
  	We may assume that $G$ is connected and $X=\{v\}$. If $v\notin A$, then it is trivial. 
  	Let $P=v_0v_1\cdots v_k$ be the admissible path in $G$ such that $A=V(P)$.
  	If $v=v_0$ or $v=v_k$, then again it is trivial. 
  	If $v=v_i$ for some $0<i<k$, then by the definition of admissible paths, 
  	$G-v_i$ is disconnected, and $v_{i-1}$ and $v_{i+1}$ are in distinct components. 
  	Thus again $A-\{v\}$ is usable in $G-v$.
  \end{proof}
  
  Suppose $Y$ is a nonempty subset of $V(G)$ and $G[Y]$ is connected. Let   $C$ be an exposed special cycle of $G'=G-Y$. Then  $C$ satisfies one of the following conditions. 
  \begin{enumerate}
	\item $C$ is an exposed special cycle of $G$. 
	\item $C$ is a non-exposed special cycle of $G$; in this case
	  $X_{C} \cap (B(G')-B) \ne \emptyset$.
	\item $C$ is not a special cycle of $G$; in this case 
	$Y\subseteq\inn_{G}(C)$, and so $Y\cap B=\emptyset$.
\end{enumerate}
A cycle $C$ is \emph{type-a}, \emph{-b}, \emph{-c}, respectively, if it satisfies condition (a), (b), (c), respectively.
Let 
\[\delta(Y) = \begin{cases} 1, & \text{ if $G'$ has a  type-c exposed special cycle},\\
0, & \text{ otherwise}.
\end{cases}
\]

\begin{lemma}
\label{lem-delta}
Let $Y$ be a nonempty subset of $V(G)$ such that $G[Y]$ is connected. Let $G'=G-Y$. If $C, C'$ are distinct exposed type-c special cycles of $G'$, then $Y_C \cap Y_{C'} \ne \emptyset$.
\end{lemma}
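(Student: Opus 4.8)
The plan is to locate a single face of $G'$ containing $Y$ and to use it, together with the structure of the graphs in $\mathcal Q$, to exhibit a vertex of $Y_C\cap Y_{C'}$. Since $C$ and $C'$ are type-c, $Y\subseteq\inn_G(C)\cap\inn_G(C')$, and in $G$ we have $\inn_G(C)=V(T_C)\cupdot Y$ and $\inn_G(C')=V(T_{C'})\cupdot Y$, where $G_C:=\inn_{G'}[C]$ and $G_{C'}:=\inn_{G'}[C']$ belong to $\mathcal Q$. As $G[Y]$ is connected, $Y$ together with all edges of $G$ incident with $Y$ lies in the closure of a single face $\phi$ of $G'$; in particular $N_G(Y)\subseteq V(\partial\phi)$. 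Since $\phi$ does not meet $C$ or $C'$ and contains $Y\subseteq\inn_G(C)\cap\inn_G(C')$, the face $\phi$ lies inside both $C$ and $C'$, hence is a face of both $G_C$ and $G_{C'}$; as the members of $\mathcal Q$ are $2$-connected, $D:=\partial\phi$ is a cycle, and $V(D)\subseteq V_C\cap V_{C'}$.

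Two properties of the graphs in $\mathcal Q$, read off from Figure~\ref{octahedron} and Observation~\ref{obs1}, drive the argument: (i) every face of such a graph other than its outer face meets the central triangle; (ii) if $\ol Y_C\neq\emptyset$ (equivalently $G_C\notin\{Q_1,Q_3\}$), then a face meeting both the central triangle and a vertex of $\ol Y_C$ meets the central triangle only in the vertex $z_C$ of $T_C$ (the vertex called $z$ in Observation~\ref{obs1}(a)). By (i), $V(D)$ meets $V(T_C)$ and $V(T_{C'})$. If $\ol Y_C=\emptyset$ then $V_C=Y_C$, so a vertex of $V(D)\cap V(T_{C'})$ lies in $V_C\cap Y_{C'}\subseteq Y_C\cap Y_{C'}$ and we are done; symmetrically if $\ol Y_{C'}=\emptyset$. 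So assume $\ol Y_C,\ol Y_{C'}\neq\emptyset$, and suppose for a contradiction that $Y_C\cap Y_{C'}=\emptyset$. Then any $p\in V(D)\cap V(T_C)\subseteq Y_C$ satisfies $p\in V_{C'}\setminus Y_{C'}=\ol Y_{C'}\subseteq V(C')$, and any $q\in V(D)\cap V(T_{C'})$ satisfies $q\in V_C\setminus Y_C=\ol Y_C\subseteq V(C)$. Thus $\phi$ meets both $T_C$ and the $\ol Y_C$-vertex $q$, so property (ii) forces $p=z_C$; symmetrically $q=z_{C'}$, and $p\neq q$ because $V(T_C)\cap V(T_{C'})=\emptyset$. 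Hence $z_C$ is a vertex of $C'$ lying strictly inside $C$, and $z_{C'}$ is a vertex of $C$ lying strictly inside $C'$.

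Next, a cycle contained in the closed disk bounded by $C$ cannot contain a vertex of $C$ in its open interior; since $z_{C'}\in V(C)$ lies strictly inside $C'$, the cycle $C'$ is not contained in that disk, and symmetrically $C$ is not contained in the closed disk bounded by $C'$. Therefore $C$ and $C'$ cross, so (being cycles of the plane graph $G'$) they share at least two vertices, none of which can lie in $X_C\cap X_{C'}$ — otherwise that vertex would belong to $Y_C\cap Y_{C'}$. Moreover every maximal sub-path of $C$ lying inside-or-on $C'$ has both ends in $V(C)\cap V(C')$ and all its vertices in $V_{C'}$, and symmetrically.

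Deriving a contradiction from this crossing configuration is the main obstacle. One runs through the finitely many possibilities $G_C,G_{C'}\in\{Q_2,Q_2^+,Q_4,Q_4^+,Q_4^{++}\}$: using the explicit adjacency lists of Observation~\ref{obs1}(a) (in particular $x_C\sim v_2,v_3$, $y_C\sim v_1,v_2$, and that each vertex of $\ol Y_C$ has at most one neighbour in $T_C$, namely $z_C$), the facts $N_{G'}(z_C)\subseteq V_C$ and $N_{G'}(z_{C'})\subseteq V_{C'}$, and the knowledge that $z_C$ sits in the $\ol Y_{C'}$-part of $C'$ while $z_{C'}$ sits in the $\ol Y_C$-part of $C$, one is forced to identify some vertex of $V(T_C)\cup X_C$ with a vertex of $V(T_{C'})\cup X_{C'}$, producing an element of $Y_C\cap Y_{C'}$ — the desired contradiction. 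This last step is elementary but tedious: a case check over the seven small graphs together with their admissible shared-vertex patterns under the planarity constraint $\inn_G(C)=V(T_C)\cupdot Y$, $\inn_G(C')=V(T_{C'})\cupdot Y$. A uniform argument showing that two special cycles whose interiors have the shape $V(T)\cupdot Y$ simply cannot cross would replace this bookkeeping and is probably the cleanest route.
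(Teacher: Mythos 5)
Your setup parallels the paper's: you locate the unique face $D$ of $G'$ whose interior held $Y$, note it is a common face of $G'_C$ and $G'_{C'}$, and use the structure of the graphs in $\mathcal Q$ (your properties (i) and (ii), which are correct and correspond to Observation~\ref{obs1}\ref{obs1uniq}) to conclude that, if $Y_C\cap Y_{C'}=\emptyset$, then $z_C\in \ol Y_{C'}$ and $z_{C'}\in\ol Y_C$, so the two cycles ``cross.'' Up to that point your argument is sound and essentially the paper's first half (the paper pushes a bit further, forcing $|\ol Y_C|=|\ol Y_{C'}|=2$ and hence $G'_C,G'_{C'}\in\{Q_4^+,Q_4^{++}\}$, which shrinks the case analysis you leave open).

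The gap is the final step, and it is not merely that you left a tedious case check unfinished: the contradiction you are hoping for does not exist. The crossing configuration is planar-realizable --- the paper's own Figure~\ref{figA} exhibits the two graphs $H_1$ and $H_2$ in which $C$ and $C'$ are both exposed special cycles sharing the face $D$, with $Y_C\cap Y_{C'}=\emptyset$ (one checks that $Y_C=\{v_1,v_2,v_3,x,y,z\}$ and $Y_{C'}=\{v_4,v_2',v_3',x',y',z'\}$ are indeed disjoint there). Consequently no ``uniform argument that two such special cycles cannot cross'' can succeed, and your case check will not be ``forced to identify'' a vertex of $Y_C$ with one of $Y_{C'}$; it will instead terminate in $H_1$ or $H_2$. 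Indeed, attaching a triangle $Y$ inside the face $D$ of $H_1$ produces a plane graph for which the statement, read as a fact about arbitrary plane graphs, is false. The lemma holds only because $G$ is the \emph{extreme counterexample} fixed in Section~\ref{sec:setup}, and the paper's proof must (and does) invoke this: having reduced to $H_1$ or $H_2$, it deletes $Y'=\{v_4,x',y'\}$, collects $x',y'$, bounds the number of new special cycles created, and derives $f(G;A)\ge\partial(G)$, contradicting that $(G;A)$ is a counterexample. Your proposal never uses extremality, so it cannot be completed along the proposed lines; the missing idea is precisely this delete-and-collect reduction on the configurations $H_1$ and $H_2$.
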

\begin{proof}
	Let $C$, $C'$ be distinct exposed type-c special cycles of $G'$. 
	Since $B\cap Y=\emptyset$ and $G[Y]$ is connected,
	there exists a facial cycle $D$ of $G'$ such that $\inn_G(D) = G[Y]$. 
	Then  $D$  is a facial cycle of both $G'_{C}$ and $G'_{C'}$.
	Arguing by contradiction, suppose $Y_C \cap Y_{C'} = \emptyset$. 
	Since $V(D)\subseteq V(G'_C)=Y_C\cup \ol Y_C$ and 
	$V(D)\subseteq V(G'_{C'})=Y_{C'}\cup \ol Y_{C'}$, 
	we have \[V(D)\subseteq V(G'_C)\cap V(G'_{C'})\subseteq \ol Y_C \cup \ol Y_{C'}.\]
	By symmetry we may assume that $|\ol Y_{C}\cap V(D)|\ge |\ol Y_{C'}\cap V(D)|$.	
	Using $|\ol Y_{C}|, |\ol Y_{C'}|\le 2$,  
	we deduce that 
	\[3\le |V(D)|\le  |V(G'_C)\cap V(G'_{C'})|\le 4, ~
	\ol Y_C\subseteq V(D),~	|\ol Y_{C}|=2,
	\text{ and } \ol Y_{C'}\cap V(D)\neq\emptyset.\] 

	We will show that $H$ is isomorphic to $H_{1}$ or $H_{2}$ in 
	Figure~\ref{figA}. Since $|\ol Y_{C}|=2$, 
	by Observation~\ref{obs1}\ref{obs1uniq},
	$D$ is the unique facial cycle
	in $G'_{C}$ such that 
	there is a vertex  $\dot z\in V(T_{C})$ with $\ol Y_{C}\cup \{\dot z\}\sub V(D)$. 
	As $\dot z\in V(D)$ and $\dot z\in Y_{C}$, we have $\dot z\in \ol Y_{C'}$. 
	Since $\ol Y_{C'}\neq\emptyset$, 
	again by Observation~\ref{obs1}\ref{obs1uniq}, 
	there is a unique vertex $\ddot z\in V(T_{C'})$ such that $\ol Y_{C'}\cup \{\ddot z\}$ is
	contained in a facial cycle of $G'_{C'}$. Then $\ddot z\in Y_{C'}\cap \ol Y_{C}\sub V(D)$. 

	First we show that $|V(G'_C)\cap V(G'_{C'})|=4$.  Assume to the contrary that  
	$|V(G'_C)\cap V(G'_{C'})|=3$. Since 
	$V(D) \subseteq V(G'_C)\cap V(G'_{C'})$, we conclude that  $\abs{V(D)}=3$. Then
	$\dot z\ddot z$ is an  edge, and 
	the  two inner faces of $G'$ incident with $\dot z\ddot z$ are contained in $V(G'_C)\cap V(G'_{C'})$.  
	Since the intersection of any two inner faces of $G'_C$ has at most $2$ vertices, 
	we have  $|V(G'_C)\cap V(G'_{C'})|\ge   4$, a contradiction.
	
As $V(G'_C)\cap V(G'_{C'})= \ol Y_{C} \cup \ol Y_{C'}$, we conclude that $\abs{\ol Y_{C'}}=2$  and $\abs{V(C)}=5=\abs{V(C')}$.

	Let $Q\in \mathcal Q$ be the plane graph isomorphic to $G'_C$.
	By inspection of Figure~\ref{octahedron}, 
	$G'_{C'}$ is isomorphic to $Q$.
	We may assume that $G'_C=Q$ by relabelling vertices. 
	Let $u\mapsto u'$ be an isomorphism from $G'_C$ to $G'_{C'}$.
	Using uniqueness from Observation~\ref{obs1}\ref{obs1uniq}, $z=\dot z$, $z'= \ddot z$, $\ol Y_{C}=\{v_{4},v_{5}\}$ and $\ol Y_{C'}=\{v'_{4},v'_{5}\}$.
	To prove our claim let us divide our analysis into two cases, resulting either in $H_1$ or $H_2$.
	\begin{itemize}
		\item 	If $\abs{V(D)}=4$, then $Q=Q_4^{+}$ and 
		$V(G'_C)\cap V(G'_C)=V(D)=\{v_4,v_5,v_1,z\}$. Since $v_4,v_5\in  \ol Y_C$, we have $v_1,z\in \ol Y_{C'}$. Then $v_4'=v_1$, $v_5'=z$, and $v_5=z'$. 
		As $X_C$ and $X_{C'}$ are exposed in $G'$, 
		the cycle $v_1v_2v_3v_1'v_2'v_3'v_1$ is in $G'[B(G')]$ and so $H=H_1$ in Figure~\ref{fig(a)}.

		\item If $\abs{V(D)}=3$, then 
		$Q=Q_4^{++}$, $V(D)=\{z,v_4,v_5\}$.
		By symmetry, we may assume that $z'=v_5$. 
		Then $V(G'_C)\cap V(G'_{C'})=\{z, v_4,v_5,v_1\}$, as $C'$ contains all common neighbors of $z$ and $z'$ in $G'$, which is a property of $Q_4^{++}$.
		Since $v_4,v_5\in \ol Y_C$ and $V(G'_C)\cap V(G'_{C'})\subseteq \ol Y_C \cup \ol Y_{C'}$, we deduce that $v_1,z\in \ol Y_{C'}$.
		By symmetry in $G'_{C'}$, we may assume that $z=v_5'$ and $v_1=v_4'$. 
		As $X_{C}$ and $X_{C'}$ are exposed in $G'$, the cycle $v_{1}v_{2}v_{3}v'_{1}v'_{2}v'_{3}v_{1}$ is in $G'[B(G')]$.
		So $H=H_{1}+zz'=H_{2}$ in Figure~\ref{fig(b)}.
	\end{itemize}

	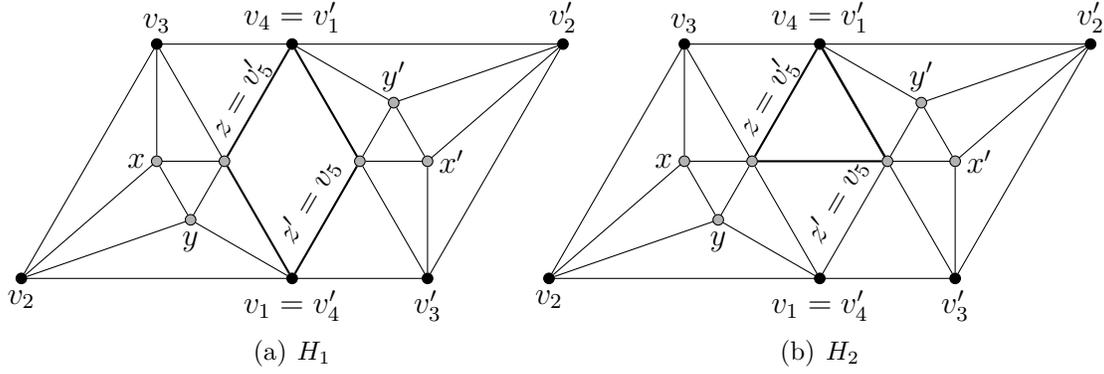
\begin{figure}
		\centering
		\subfigure[	$H_{1}$]{
			\label{fig(a)}
			\begin{tikzpicture}[scale=.9]
				\tikzstyle{every label}=[label distance=2pt,rectangle,fill=none,draw=none]
				\tikzstyle{every node}=[circle,draw,fill=black!30,inner sep=0pt,minimum width=4pt]
				\tikzstyle{xc}=[circle,draw,fill=black,inner sep=0pt,minimum width=4pt]
				\draw (0,0) node [xc,label=below:$v_2$] (v2) {}
				--++ (0:4) node [xc,label=below:{$v_1=v_4'$}] (v1){}
				--++ (0:2) node [xc,label=below:{$v_3'$}] (v3'){};
				\draw (v2)
				--++(60:4) node[xc,label=$v_3$](v3){}
				--++ (0:2) node[xc,label={$v_4=v_1'$}] (v4){}
				--++ (0:4) node [xc,label={$v_2'$}] (v2'){}
				--(v3');
				\draw (v3)--++(-60:2)node[label={[rotate=60]above right:{$~z=v_5'$}}](z){};
				\draw [thick](v4)--+(-60:2)node[thin,label={[rotate=60]above left:{$z'=v_5~$}}](z'){};
				\draw (z')--(v3');
				\draw (z')--+(60:1)node[label={$y'$}](y'){}--(v2');
				\draw (z')--+(0:1)node[label=right:{$x'$}](x'){}--(v2');
				\draw (v4)--(y')--(x')--(v3');
				\draw (z)--+(180+60:1)node[label=below:{$y$}](y){}--(v2);
				\draw (z)--+(180:1)node[label=left:{$x$}](x){}--(v2);
				\draw (v3)--(x)--(y)--(v1);
				\draw [thick](z')--(v1)--(z)--(v4);
			\end{tikzpicture}
		}\hskip-2em
		\subfigure[	$H_{2}$]{
			\label{fig(b)}
			\begin{tikzpicture}[scale=.9]
				\tikzstyle{every label}=[label distance=2pt,rectangle,fill=none,draw=none]
				\tikzstyle{every node}=[circle,draw,fill=black!30,inner sep=0pt,minimum width=4pt]
				\tikzstyle{xc}=[circle,draw,fill=black,inner sep=0pt,minimum width=4pt]
				\draw (0,0) node [xc,label=below:$v_2$] (v2) {}
				--++ (0:4) node [xc,label=below:{$v_1=v_4'$}] (v1){}
				--++ (0:2) node [xc,label=below:{$v_3'$}] (v3'){};
				\draw (v2)
				--++(60:4) node[xc,label=$v_3$](v3){}
				--++ (0:2) node[xc,label={$v_4=v_1'$}] (v4){}
				--++ (0:4) node [xc,label={$v_2'$}] (v2'){}
				--(v3');
				\draw (v3)--++(-60:2)node[label={[rotate=60]above right:{$~z=v_5'$}}](z){};
				\draw [thick](v4)--+(-60:2)node[thin,label={[rotate=60]above left:{$z'=v_5~$}}](z'){};
				\draw (z')--(v3');
				\draw (z')--+(60:1)node[label={$y'$}](y'){}--(v2');
				\draw (z')--+(0:1)node[label=right:{$x'$}](x'){}--(v2');
				\draw (v4)--(y')--(x')--(v3');
				\draw (z)--+(180+60:1)node[label=below:{$y$}](y){}--(v2);
				\draw (z)--+(180:1)node[label=left:{$x$}](x){}--(v2);
				\draw (v3)--(x)--(y)--(v1);
				\draw [thick](z')--(v4)--(z)--(z');
				\draw (z')--(v1)--(z);
			\end{tikzpicture}
		}
		\caption{The isomorphism types $H_{1}$ and $H_{2}$ of $H=G'[V(G'_{C})\cup V(G'_{C'})]$ when $|V(D)|=4$ or $|V(D)|=3$ in the proof of Lemma~\ref{lem-delta}. Solid black vertices denote boundary vertices of $G$ and thick edges represent edges in $D$.}
		\label{figA} %
	\end{figure}

	Notice that in both cases, $v_4=v_1'\in B(G')$ and $v_4\in V(D)$. 
	Set $Y'=\{v_{4}, x', y'\}$ and $G''=G-Y'$.  
	As $V(\inn_G(D))=Y$, in $G-v_4$, we can collect both $x'$ and $y'$ and at least one vertex of $Y$ is exposed. 
	Thus $B(G'')-B$ contains $z,z'$ and $(B(G'')-B)\cap Y\ne\emptyset$. So    $|B(G'')-B| \ge 3$.

	Let $\mathcal P$ be an optimal special cycle packing of $G''$, 
	and put \[\mathcal{P}_{0}=\{C^*\in\mathcal{P}: C^* \text{ is 
	a non-exposed special cycle of } G\}.\] Consider $C^{*}\in \mathcal{P}_{0}$.
	As $v_{4}=v_1'\in B\cap Y'$,  there is no exposed type-c special cycle in $G''$.
	Thus $C^*$ is type-b, and so $X_{C^*}\cap (B(G'')-B) \neq \emptyset$. 
	Let $w\in X_{C^*}\cap (B(G'')-B)$.
	Since $T_{C^*}$ is connected, has a neighbour of $w$, and has no vertex from $B(G'')$, we have
 $V(T_{C^*})\subseteq Y$
	and $X_{C^*}\subseteq  (B(G'')-B)\cup \{v_1\}$.

	As $\mathcal{P}_{0}$ is a packing, $3|\mathcal{P}_{0}|\le |B(G'')-B|+1$. 
	This implies that $|\mathcal {P}_{0}|\le|B(G'')-B|-2$, 
	because $|B(G'')-B|\ge 3$. 
	We now deduce that 
	\[|\mathcal{P}_{0}|\le |B(G'')-B|-2=|B(G'')|-(|B|-1)-2=|B(G'')|-|B|-1.\]
	Therefore 
	\[\tau(G) \ge \tau(G'')-|\mathcal{P}_{0}|\ge \tau(G'') - |B(G'')| + |B|+1.\]
	Hence, using $V(G)=V(G'')\cup Y'$,  
	\[\partial(G)=\frac34|V(G)|+\frac14(|B|-\tau(G))\le\frac34 (|V(G'')|+3)+\frac14(|B(G'')|-\tau(G'')-1) = \partial(G'')+ 2.\]
	Now, as we have already collected $x',y'$, we have 
	\[f(G;A) \ge f(G'';A)+2 \ge \partial(G'')+2 \ge \partial(G).\]
	This contradicts the assumption that $G$ is a counterexample.
\end{proof}

\begin{lemma}
\label{lem-taug'}
Let $Y$ be a nonempty subset of $V(G)$ such that $G[Y]$ is connected and let $G'=G-Y$. Then   
\[
	\partial(G) \le \partial(G') +\frac{ 3|Y|}{4} + \frac{|B-B(G')|+\delta(Y)}{4}. %
\]
Moreover, if $G$ has an exposed special cycle $C$ such that $Y_C \cap Y \ne \emptyset$ and $Y_C\cap Y_{C'}=\emptyset$  for any other exposed special cycle $C'$ of $G$, then  
\[
	\partial(G) \le \partial(G') + \frac{ 3|Y|}{4} + \frac{|B-B(G')|+\delta(Y)-1}{4}.%
\]
\end{lemma}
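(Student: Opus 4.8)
The plan is to reduce both inequalities to a single estimate on the special cycle packing number, namely
\[
\tau(G')\le \tau(G)+|B(G')-B|+\delta(Y)
\]
for the first inequality, and the same bound with the right-hand side decreased by $1$ for the second. First I would do the bookkeeping. Since $V(G)=V(G')\cupdot Y$ we have $|V(G)|=|V(G')|+|Y|$, and inclusion--exclusion gives $|B|-|B(G')|=|B-B(G')|-|B(G')-B|$. Substituting these into the definition of $\partial$ for $G$ and for $G'$, a short computation turns the two claimed inequalities into the two bounds on $\tau(G')$ above.

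To establish the first bound on $\tau(G')$, I would fix an optimal special cycle packing $\mathcal P$ of $G'$ and partition it as $\mathcal P=\mathcal P_a\cupdot\mathcal P_b\cupdot\mathcal P_c$ according to the type-a/-b/-c trichotomy recorded just before Lemma~\ref{lem-delta}. For $\mathcal P_c$: Lemma~\ref{lem-delta} says any two distinct exposed type-c special cycles of $G'$ have intersecting $Y$-sets, so the packing condition allows at most one such cycle in $\mathcal P$; combined with the fact that $\delta(Y)=0$ precisely when $G'$ has no exposed type-c special cycle, this gives $|\mathcal P_c|\le\delta(Y)$. For $\mathcal P_b$: each $C\in\mathcal P_b$ satisfies $X_C\cap(B(G')-B)\ne\emptyset$, and since $X_C\subseteq Y_C$ while the sets $Y_{C'}$ over $C'\in\mathcal P$ are pairwise disjoint, choosing one vertex of $X_C\cap(B(G')-B)$ for each $C$ gives an injection $\mathcal P_b\hookrightarrow B(G')-B$, so $|\mathcal P_b|\le|B(G')-B|$. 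For $\mathcal P_a$: each $C\in\mathcal P_a$ is an exposed special cycle of $G$; moreover every graph in $\mathcal Q$ has exactly three interior vertices, so $Y$ cannot meet $\inn_G(C)$, which forces $\inn_{G'}[C]=\inn_G[C]$ and hence makes $T_C,X_C,Y_C$ independent of whether they are computed in $G$ or in $G'$. Consequently $\mathcal P_a$ is a special cycle packing of $G$, so $|\mathcal P_a|\le\tau(G)$. Adding the three estimates gives the first inequality.

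For the second inequality I would use the special cycle $C_0$ supplied by the hypothesis. Every $C'\in\mathcal P_a$ satisfies $Y_{C'}\subseteq V(G'_{C'})\subseteq V(G')$, hence $Y_{C'}\cap Y=\emptyset$; since $Y_{C_0}\cap Y\ne\emptyset$, it follows that $C_0\notin\mathcal P_a$, so every $C'\in\mathcal P_a$ is an exposed special cycle of $G$ distinct from $C_0$, and the hypothesis gives $Y_{C_0}\cap Y_{C'}=\emptyset$. Thus $\mathcal P_a\cup\{C_0\}$ is a special cycle packing of $G$ and $|\mathcal P_a|\le\tau(G)-1$; rerunning the summation with this sharper bound proves the second inequality.

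I expect the main obstacle to be, not any of these estimates individually, but the careful check that the derived data $T_C,X_C,Y_C$ of a type-a cycle do not change when passing from $G'$ to $G$ — this is exactly what makes $\mathcal P_a$ (and $\mathcal P_a\cup\{C_0\}$) an honest packing of $G$, and it hinges on the rigidity of the interiors of special cycles (always three interior vertices, forming a $K_3$), which forces $Y$ to avoid $\inn_G(C)$. Everything else is bookkeeping together with the trichotomy and Lemma~\ref{lem-delta}.
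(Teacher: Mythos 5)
Your proposal is correct and follows essentially the same route as the paper: both take an optimal special cycle packing of $G'$, use the type-a/b/c trichotomy with Lemma~\ref{lem-delta} to bound the number of type-b cycles by $\abs{B(G')-B}$ and of type-c cycles by $\delta(Y)$, observe that the type-a cycles (together with the extra cycle $C$ in the ``moreover'' case) form a special cycle packing of $G$, and then translate the resulting bound relating $\tau(G)$ and $\tau(G')$ through the definition of $\partial$. You merely make explicit some details the paper leaves implicit (the injection into $B(G')-B$, the invariance of $T_C,X_C,Y_C$ for type-a cycles, and the inclusion--exclusion bookkeeping), which is fine.
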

\begin{proof}
	
	An optimal special cycle packing of $G'$
	has at most $\abs{B(G')-B}$ type-b cycles by definition
	and 
	has $\delta(Y)$ type-c cycles by Lemma~\ref{lem-delta}. 
	We can remove such cycles from the special cycle packing of $G'$  to obtain a special cycle packing of $G$.
	So \[\tau(G)   \ge \tau(G') - |B(G')-B| - \delta(Y) =  \tau(G') - |B(G')|+|B| - |B-B(G')| - \delta(Y).\]
	Plugging this into the definition of $\partial(G)$, we obtain  \[\partial(G) \le \partial(G') +\frac{ 3|Y|}{4} + \frac{|B-B(G')|+\delta(Y)}{4}.\]

	If $G$ has an exposed special cycle $C$ such that $C$ is not a special cycle of $G'$ and $Y_C$ is disjoint from $Y_{C'}$ for any other exposed special cycle $C'$ of $G$, then we can add cycle $C$ to the 
	special cycle packing of $G$ obtained above. So 
	$$\tau(G) \ge \tau(G') - |B(G')-B| - \delta(Y)+1=  \tau(G') - |B(G')|+|B| - |B-B(G')| - \delta(Y)+1.$$
	Plugging this into the definition of $\partial(G)$, we obtain 
	\[\partial(G) \le \partial(G') +\frac{ 3|Y|}{4} + \frac{|B-B(G')|+\delta(Y)-1}{4}.
	\qedhere\]
\end{proof}

\begin{lemma}
	\label{d4}Every vertex $v\in V-A$ satisfies $d(v)\geq4$.
\end{lemma}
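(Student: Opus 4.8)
The plan is the usual minimality reduction: assume for contradiction that some $v\in V-A$ has $d(v)\le 3$, delete $v$, invoke the minimality of $|V(G)|$, and then re-insert $v$ at the top of the ordering. Concretely, I would put $G'=G-v$ and $Y=\{v\}$. Since $v\notin A$, Lemma~\ref{lem:reduction} shows $A$ is usable in $G'$, and as $|V(G')|<|V(G)|$ the pair $(G';A)$ is not a counterexample, so $f(G';A)\ge \partial(G')$.

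Next I would extend an optimal $A$-good set. Choose an $A$-good set $Y'\subseteq V(G')$ with $|Y'|=f(G';A)$, witnessed by an ordering $L'\in\Pi(G')$, and let $L\in\Pi(G)$ be obtained from $L'$ by placing $v$ at the very top. Then $A\le_L V-A$ because $v\notin A$; for each $w\in Y'-A$ the back-degree $d_{G[Y'\cup\{v\}]}^{L}(w)$ equals $d_{G'[Y']}^{L'}(w)\le 3$, since $v$ lies above $w$; and $d_{G[Y'\cup\{v\}]}^{L}(v)\le d_G(v)\le 3$, since $v$ is topmost. Hence $Y'\cup\{v\}$ is $A$-good in $G$, so $f(G;A)\ge f(G';A)+1\ge \partial(G')+1$.

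It then remains to check $\partial(G)\le\partial(G')+1$. Lemma~\ref{lem-taug'} applied to $Y=\{v\}$ (which induces a connected graph) gives $\partial(G)\le \partial(G')+\tfrac34+\tfrac14\big(|B-B(G')|+\delta(Y)\big)$, so it suffices to prove $|B-B(G')|+\delta(Y)\le 1$. The key point is that deleting a vertex cannot push any other vertex off the boundary: every $w\in B$ with $w\ne v$ remains incident with the outer face of $G-v$ in its component, so $B-B(G')\subseteq\{v\}$. Thus if $v\notin B$ then $B-B(G')=\emptyset$, while if $v\in B$ then $|B-B(G')|=1$ but $\delta(Y)=0$, since $\delta(Y)=1$ would force $v$ to lie strictly inside a cycle of $G$ and hence $v\notin B$. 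In either case $|B-B(G')|+\delta(Y)\le 1$, so $\partial(G)\le\partial(G')+1\le f(G;A)$, contradicting that $(G;A)$ is a counterexample.

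I expect the only place needing care is this last paragraph: one must recall that $G-v$ may be disconnected, so that $B(\cdot)$ is read component-wise as in the setup, and that a vertex separated from the outer face by a cycle of $G$ cannot itself lie on the outer face of $G$; granting these, the computation is routine.
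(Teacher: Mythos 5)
Your proposal is correct and follows essentially the same route as the paper: delete $v$, apply Lemma~\ref{lem-taug'} with $Y=\{v\}$, observe that $|B-B(G')|+\delta(Y)\le 1$ (since $\delta(Y)=0$ when $v\in B$), and re-insert the degree-$\le 3$ vertex $v$ to get $f(G;A)\ge f(G';A)+1\ge\partial(G')+1\ge\partial(G)$. You merely spell out more explicitly the ordering argument and the boundary bookkeeping that the paper leaves implicit.
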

\begin{proof}
	Suppose that $d(v)\leq3$. Apply   Lemma~\ref{lem-taug'} with $Y=\{v\}$.  
	Let $G'=G-Y$.
	Note that if 
	$v$ is a boundary vertex, then $\delta(Y)=0$. So 
	$|B-B(G')|+\delta(Y) \le 1$.   Therefore  $$\partial(G) \le  \partial(G') + \frac34 + \frac14.$$ 
	 By the minimality of $(G;A)$,  $f(G'; A) \ge \partial(G')$. Therefore 
	 $f(G;A) = f(G';A)+1 \ge \partial(G)$, a contradiction. 
\end{proof}	

\begin{lemma}\label{lem-a}
	There are no disjoint nonempty subsets $X$, $Y$  of $V(G)$ such that
	$Y  $ is a set of $4|X|$ interior vertices of $G$,  
	$G[X\cup Y ]$ is connected, and 
	$Y$ is collectable in $G-X$.
\end{lemma}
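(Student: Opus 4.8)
The plan is to contradict the extremality of $(G;A)$. Suppose such disjoint nonempty sets $X$ and $Y$ exist; write $W=X\cup Y$ and $G'=G-W$. Then $|W|=5|X|$, and since $W\neq\emptyset$ we have $|V(G')|<|V(G)|$. The goal is to exhibit an $A$-good subset of $G$ of size at least $\partial(G)$, which is impossible for a counterexample.

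First I would build the $A$-good set. By Lemma~\ref{lem:reduction}, $A\cap V(G')$ is usable in $G'$, so the minimality of $(G;A)$ gives $f(G';A)\ge\partial(G')$; fix an $A$-good set $Y^{*}\subseteq V(G')$ with $|Y^{*}|=f(G';A)$. I claim $Y^{*}\cup Y$ is $A$-good in $G$. Indeed $G[Y^{*}\cup Y]$ is a subgraph of $G-X$, and $Y$ is collectable in $G-X$; because $Y$ consists of interior vertices it is disjoint from $A$, so for each term of a collection order $y_{1},\dots,y_{k}$ of $Y$ in $G-X$ the escape clause ``$V(G-X)-\{y_{1},\dots,y_{i-1}\}\subseteq A$'' is impossible (it would force $y_{i}\in A$), and hence $d_{(G-X)-\{y_{1},\dots,y_{i-1}\}}(y_{i})\le 3$ for all $i$. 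This inequality only decreases on passing to the subgraph $G[Y^{*}\cup Y]$, so the same order witnesses that $Y$ is collectable in $G[Y^{*}\cup Y]$; as the remaining vertex set $V(G[Y^{*}\cup Y])-Y=Y^{*}$ is $A$-good, collecting $Y$ shows $Y^{*}\cup Y$ is $A$-good. Therefore $f(G;A)\ge|Y^{*}|+|Y|\ge\partial(G')+4|X|$.

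Next I would bound $\partial(G)$ from above. Applying Lemma~\ref{lem-taug'} to the connected set $W$ yields $\partial(G)\le\partial(G')+\tfrac{3|W|}{4}+\tfrac{|B-B(G')|+\delta(W)}{4}=\partial(G')+\tfrac{15|X|}{4}+\tfrac{|B-B(G')|+\delta(W)}{4}$. Comparing with the lower bound, it suffices to check $|X|\ge|B-B(G')|+\delta(W)$. Deleting vertices only enlarges the infinite face, so $B\cap V(G')\subseteq B(G')$, hence $B-B(G')\subseteq B\cap W=B\cap X$ (using that $Y$ is interior), giving $|B-B(G')|\le|X|$; this settles the case $\delta(W)=0$. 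If $\delta(W)=1$, then $G'$ has a type-c exposed special cycle $C$ with $W\subseteq\inn_G(C)$, so $W\cap B=\emptyset$, whence $B-B(G')=\emptyset$ and $|X|\ge 1=\delta(W)$ since $X\neq\emptyset$. In either case $|X|\ge|B-B(G')|+\delta(W)$, so $f(G;A)\ge\partial(G')+4|X|\ge\partial(G')+\tfrac{15|X|}{4}+\tfrac{|B-B(G')|+\delta(W)}{4}\ge\partial(G)$, contradicting the choice of $(G;A)$.

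The arithmetic and the topological fact $B\cap V(G')\subseteq B(G')$ are routine. The step that needs the most care is confirming that collectability is inherited by the induced subgraph $G[Y^{*}\cup Y]$ — that the same collection order still works and that the alternative ``$\subseteq A$'' clause is irrelevant because $Y$ avoids $A$ — together with the bookkeeping of $\delta(W)$; I expect the $\delta(W)=1$ case, where one must observe that a type-c cycle forces $W$ to lie in the interior, to be the only genuinely delicate point.
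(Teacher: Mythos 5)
Your proof is correct and follows essentially the same route as the paper's: delete $X\cup Y$, apply Lemma~\ref{lem-taug'} with the bound $|B-B(G')|+\delta(X\cup Y)\le |X|$, and compare with $f(G;A)\ge f(G';A)+|Y|$ obtained from extremality and collectability. The paper states these two inequalities without justification, whereas you verify them (the inheritance of the collection order by $G[Y^{*}\cup Y]$ and the case analysis on $\delta(X\cup Y)$), which is exactly the right bookkeeping.
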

\begin{proof}
    Suppose that there exist disjoint nonempty sets $X, Y\subseteq V(G)$ 
	such that $Y$ is a subset of $4\abs{X}$ interior vertices of $G$,
	$G[X\cup Y]$ is connected, 
	and $Y$ is collectable in $G-X$.
	Let $G'=G-(X\cup Y)$.
    We apply Lemma~\ref{lem-taug'}. Since $|B-B(G')|+ \delta(X\cup Y) \le |X|$, we have  
	  $\partial(G) \le \partial(G') +\frac{3}{4} (|X|+|Y|) + \frac{1}{4}|X|=\partial(G')+4|X|$. 
	  As $G$ is extreme, $f(G';A) \ge \partial(G')$. 
    Hence $f(G;A) \ge f(G';A)+|Y| =  f(G';A)+ 4|X| \ge \partial(G')+4|X|  \ge \partial(G)$, a contradiction. 
\end{proof}

 \begin{lemma}\label{two-special-cycle}  
	For any two distinct special cycles $C_1, C_2$ of $G$, 
	$Y_{C_1} \cap Y_{C_2} = \emptyset$. 
\end{lemma}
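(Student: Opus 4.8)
The plan is to argue by contradiction. Suppose $C_1\neq C_2$ are special cycles of $G$ with $w\in Y_{C_1}\cap Y_{C_2}$, and write $G_i:=G_{C_i}$ and $T_i:=T_{C_i}$. The whole argument will exploit the extremality of $(G;A)$ through Lemma~\ref{lem-a}, with Lemma~\ref{d4} handling a few degenerate subcases.

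First I would determine how $C_1$ and $C_2$ can sit in the plane. Every special cycle $C$ satisfies $\lvert\inn_G(C)\rvert=3$ (since $T_C\cong K_3$) and $\lvert V(C)\rvert\le5$, so $\lvert V(G_C)\rvert\le8$. Consequently neither of the open disks bounded by $C_1$, $C_2$ can be contained in the other: that would force the $\ge6$ vertices $V(C_2)\cup V(T_2)$ to lie among the $3$ vertices of $\inn_G(C_1)$ (or symmetrically). Also, equal open disks would give $G_{C_1}=G_{C_2}$ and hence $C_1=C_2$. So the two disks overlap only partially; in particular every vertex of $(Y_{C_1}\cap Y_{C_2})\setminus(V(T_1)\cup V(T_2))$ lies on both $C_1$ and $C_2$, and $V(T_1)\cap V(T_2)$ lies in the common region.

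Next comes the case analysis, driven by Observation~\ref{obs1}. The facts I would lean on are: (i) each $T_i$ has two vertices $x_i,y_i$ of degree exactly $4$ in $G$, with $N_G(x_i),N_G(y_i)\subseteq Y_{C_i}$ given explicitly by Figure~\ref{octahedron}; and (ii) by Observation~\ref{obs1}\ref{obs1collect}, deleting one vertex of $X_{C_i}$ makes $V(T_i)$ (three interior vertices) collectable. The goal in each case is to produce a connected set $X\cupdot Y\subseteq V(G)$ with $X\neq\emptyset$, with $Y$ a set of exactly $4\lvert X\rvert$ interior vertices, and with $Y$ collectable in $G-X$; Lemma~\ref{lem-a} then yields a contradiction. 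Concretely: if $T_1$ and $T_2$ meet in an edge $pq$, then $V(T_1)\cup V(T_2)=\{p,q,a_1,a_2\}$ is a set of four interior vertices, and by (ii) we may delete a suitable $v\in X_{C_1}$, collect $V(T_1)$, and then collect $a_2$ — here (i) is what guarantees that $a_2$ has residual degree $\le3$, which in turn forces either that the apex of $T_2$ opposite $pq$ is one of its two degree-$4$ vertices or that $T_1=T_2$. In the case $T_1=T_2$, comparing the (fixed) neighbourhoods $N_G(x_i),N_G(y_i)$ as read off from $G_1$ and from $G_2$ forces $V(C_1)\cap V(C_2)$ to be large enough that $C_1=C_2$, a contradiction. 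In the remaining cases, where the shared vertex lies in some $X_{C_i}$ while the triangles meet in at most one vertex, I would either use the explicit adjacencies plus (ii) to append a fourth interior vertex to a $T_i$, or show that the configuration produces a vertex outside $A$ of degree $\le3$, contradicting Lemma~\ref{d4}.

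I expect the case analysis to be the main obstacle. There are several topologically distinct overlap patterns for $C_1$ and $C_2$ (partially overlapping disks that share a boundary arc versus ones whose boundaries genuinely cross, with varying lengths of shared boundary), and each of these splits further according to which of the seven graphs of $\mathcal Q$ realises $G_1$ and $G_2$; in every branch one must either extract exactly $4\lvert X\rvert$ genuinely collectable interior vertices or reduce to $C_1=C_2$. The delicate point is the degree bookkeeping for collectability: one must only ever try to collect a vertex whose residual degree is truly at most $3$, which is precisely why the degree-$4$ vertices $x_i,y_i$ supplied by Observation~\ref{obs1} are the right ones to build around.
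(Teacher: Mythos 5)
Your overall strategy is the one the paper actually uses: split into cases according to how $Y_{C_1}\cap Y_{C_2}$ is realised ($T_{C_1}$ and $T_{C_2}$ sharing an edge, sharing a single vertex, a vertex of $X_{C_1}$ lying in $V(T_{C_2})$, or $X_{C_1}\cap X_{C_2}\neq\emptyset$), and in each case delete one vertex and collect four interior vertices drawn from the two inner triangles, contradicting Lemma~\ref{lem-a}. Your preliminary discussion of how the two disks can overlap is not needed; the case split on the type of intersection of $Y_{C_1}$ and $Y_{C_2}$ suffices, and the paper never invokes Lemma~\ref{d4} here.

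The genuine gap is in the degree-$6$ subcases, which is exactly where the proof does its real work. In your shared-edge case you assert that after deleting a suitable $v\in X_{C_1}$ and collecting $V(T_1)$, the remaining apex $a_2$ of $T_2$ has residual degree at most $3$, ``which forces either that the apex is one of the two degree-$4$ vertices or that $T_1=T_2$.'' That dichotomy is false: the apex opposite the shared edge can have degree $6$ (when $G_{C_2}\cong Q_4^{++}$) with $T_1\neq T_2$, and then removing its two neighbours in $T_1$ still leaves it with degree $4$, so your collection order stalls. What rescues the argument is an extra adjacency fact read off from Figure~\ref{octahedron}: the deleted vertex $v$ is the fourth neighbour of a degree-$4$ vertex on the shared edge, and a degree-$6$ vertex of an inner triangle is adjacent to every vertex of its special cycle except one; the paper records this as ``each of $z$, $z'$ is either adjacent to $v$ or has degree at most $5$,'' which is what makes $\{x,y,z,z'\}$ collectable in $G-v$. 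The shared-vertex case has the same difficulty and needs a different fix: when one of the four vertices to be collected has degree $6$, one deletes \emph{that} vertex instead of the shared one and collects the other four in a carefully chosen order (taking the remaining degree-$6$ vertex last). Without these two observations the case analysis cannot be completed as you have sketched it.
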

\begin{proof}
Assume to the contrary that $C_1, C_2$ are two   special cycles  of $G$   with $Y_{C_1} \cap Y_{C_2} \ne \emptyset$. 
	Observe that for each $i=1,2$, $V(T_{C_i})$ has two vertices of degree $4$
	and one vertex of degree $4$, $5$, or $6$ in $G$.

	If $T_{C_1}$ and $T_{C_2}$ share an edge, say $T_{C_1} = xyz$ and $T_{C_2} = xyz'$, then one of $x,y$, say $x$, has degree~$4$.
	Since $G$ is simple, $z\neq z'$.
	Let $v$ be the other neighbor of $x$. 
	By inspecting all graphs in $\mathcal Q$, we deduce that
	each of $z$, $z'$ is either adjacent to $v$ or has degree at most $5$ in $G$. So in $G-v$, the set $\{x,y,z,z'\}$ is collectable, contrary to Lemma~\ref{lem-a}.

	Assume $T_{C_1}$ and $T_{C_2}$ have a common vertex,   say   $T_{C_1} = xyz$ and $T_{C_2}=xy'z'$. 
	If none of $y$, $z$, $y'$, $z'$ have degree $6$, then we can delete $x$ and collect $y$, $z$, $y'$, $z'$, contrary to Lemma~\ref{lem-a}. 
	So we may assume that $d_G(y)=6$ and hence $d_G(x)=d_G(z)=4$ and all the faces incident to $x$ are triangles because $G_{C_1}$ is isomorphic to $Q_4^{++}$.
	Thus we may assume $yy', zz' \in E(G)$.
	By deleting $y$, we can collect $x$, $z$, $z'$, and $y'$, 
	again contrary to Lemma~\ref{lem-a}. (We collect $y'$ ahead of  $z'$ if $d_G(z')=6$ and collect $z'$ ahead of $y'$ otherwise.)
	Thus $V(T_{C_1}) \cap V(T_{C_2}) = \emptyset$. 
	
	If $X_{C_1} \cap V(T_{C_2}) \ne \emptyset$, then 
	for a vertex $v$ of maximum degree in $V(T_{C_2})$, 
	after deleting~$v$, we can collect the other two vertices of $T_{C_2}$ and two vertices of $T_{C_1}$, contrary to Lemma~\ref{lem-a}.
	So $X_{C_1}\cap V(T_{C_2})=\emptyset$ and by symmetry, 
	$X_{C_2}\cap V(T_{C_1})=\emptyset$.
	
	If  $  X_{C_1} \cap X_{C_2}$ contains a vertex $v$, then by deleting $v$, we can collect two vertices from each of $T_{C_1}$ and $T_{C_2}$, again contrary to Lemma~\ref{lem-a} because $V(T_{C_1})\cap V(T_{C_2})=\emptyset$. 
	\end{proof}

\begin{lemma}
\label{lem-typec}
If $C$ is a special cycle of $G$, then there is a vertex $u \in X_C$ such that $V(T_C)$ is collectable in $G-u$ and $G'=G-(V(T_C) \cup \{u\})$ has no type-c special cycle. 
\end{lemma}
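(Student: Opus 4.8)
The statement to prove is Lemma~\ref{lem-typec}: for any special cycle $C$ of $G$, there is a vertex $u \in X_C$ such that $V(T_C)$ is collectable in $G-u$ and $G' = G - (V(T_C) \cup \{u\})$ has no type-c special cycle.

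First I would isolate the ``collectability'' half. By Observation~\ref{obs1}\ref{obs1collect}, after deleting \emph{any} vertex $v \in X_C$, the set $V(T_C)$ is collectable in $G_C - v$, with the single exception that if $G_C = Q_4^{++}$ and $v = v_2$, then only $\{x,y\}$ is collectable. But collectability in $G_C - v$ is not quite the same as collectability in $G - v$, since vertices of $T_C$ may have neighbours in $\ext(C)$; however, by Observation~\ref{obs1} every vertex of $T_C$ has degree $\le 6$ in $G$ (indeed the degree sequence is controlled — two vertices of degree $4$), and collecting from $\inn[C]$ already removes most of the degree, so one checks case by case that each vertex of $T_C$ still has back-degree $\le 3$ when collected in the order dictated by $G_C$. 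So for all but possibly one choice of $u \in X_C$ (namely $v_2$ when $G_C = Q_4^{++}$), $V(T_C)$ is collectable in $G - u$. The point is that $X_C$ contains at least three vertices (four if $G_C = Q_3$), so we have several candidates for $u$ and only need to dodge one bad one.

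Next I would handle the ``no type-c special cycle'' half, which is the main obstacle. Set $Y = V(T_C) \cup \{u\}$ and $G' = G - Y$; note $G[Y]$ is connected. A type-c special cycle $D$ of $G'$ would be one that is not special in $G$, so by the trichotomy stated before Lemma~\ref{lem-delta}, $Y \subseteq \inn_G(D)$, and in particular $Y$ lies strictly inside $D$. Now $T_C$ is a triangle with $\ge 2$ vertices of degree $4$ in $G$, and its neighbourhood is essentially confined to $V_C = V(G_C)$; so $D$ must ``surround'' $T_C$ using vertices of $C \cup X_C$ together with the deleted vertex's old neighbours. Using Observation~\ref{obs1}\ref{obs1collect} (to know exactly which vertices of $T_C$ survive the collection in the exceptional case) and the explicit structure of the seven graphs in $\mathcal Q$, I would argue that if for every choice of $u \in X_C$ the graph $G - (V(T_C) \cup \{u\})$ had a type-c special cycle $D_u$, then these $D_u$ (each a $3,4,5$-cycle with an interior triangle $T_{D_u}$ made of the three exposed vertices of $T_C$ plus neighbours) would force too much structure near $C$ — concretely a configuration violating Lemma~\ref{two-special-cycle} (two special cycles of $G$ with $Y_{C_1} \cap Y_{C_2} \ne \emptyset$) or Lemma~\ref{lem-a} (a collectable set of $4|X|$ interior vertices after deleting $X$). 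The cleanest route is probably: pick $u$ to be a vertex of $X_C \cap B$ of degree $4$ in $G$ whenever possible — such a $u$ has no neighbours outside $V_C$ except along $B$, so deleting it cannot create a new small face enclosing $T_C$ unless $C$ itself was already almost a separating triangle, and that case is pinned down by inspecting $\mathcal Q$.

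The third step is to combine: among the $\ge 3$ candidates in $X_C$, at most one is bad for collectability ($v_2$ in the $Q_4^{++}$ case), and I claim at most one more can be bad for the type-c condition — essentially because two distinct type-c special cycles $D_{u}, D_{u'}$ of $G'$ would, by Lemma~\ref{lem-delta} applied with $Y = V(T_C) \cup \{u\}$ (wait — that needs the same $G'$), so instead I would argue directly that a type-c cycle $D$ in $G - (V(T_C) \cup \{u\})$ must use the two neighbours of $u$ outside $T_C$ on its boundary, and by the degree and planarity constraints of $\mathcal Q$ there is at most one $u \in X_C$ for which such a $D$ can exist. Since $|X_C| \ge 3$, a good $u$ remains. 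The bulk of the work — and the part I expect to be most delicate — is the exhaustive but finite verification, over the seven members of $\mathcal Q$ and the possible external attachments at vertices of $C$, that a new type-c special cycle cannot persist for two different deletions $u$; here Lemmas~\ref{lem-a} and~\ref{two-special-cycle} do the heavy lifting by ruling out the over-constrained configurations outright.
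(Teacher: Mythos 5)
Your first half (collectability) is fine and matches the paper: Observation~\ref{obs1}\ref{obs1collect} gives collectability of $V(T_C)$ in $G-u$ for every $u\in X_C$ except possibly $v_2$ when $G_C=Q_4^{++}$, and since all neighbours of $T_C$ lie in $V_C$ there is nothing extra to check outside $G_C$. The paper simply takes $u=v_1$, which is never the bad vertex.

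The second half has a genuine gap. Your plan rests on the claim that at most one $u\in X_C$ can leave a type-c special cycle behind, so that a good $u$ survives by counting over $|X_C|\ge 3$. You do not prove this claim, and it is not at all clear how to prove it by ``degree and planarity constraints of $\mathcal Q$'' alone: a priori, for the octahedron $Q_1$ all three vertices of $X_C$ could be bad simultaneously, and ruling this out is exactly the hard content of the lemma. The paper does something different and more targeted. It fixes $u=v_1$, assumes a type-c cycle $C'$ exists in $G'$, and observes that the face $C''$ of $G'_{C'}$ enclosing $Y=V(T_C)\cup\{v_1\}$ must consist of the path $C-v_1$ together with a path $P$ with $|E(P)|\le 4$; since $N_G(v_1)\subseteq V(P)\cup\{y,z\}$ this forces $d_G(v_1)\le 7$. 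The case $d_G(v_1)\le 6$ is then killed by Lemma~\ref{lem-a} (delete $v_2$, collect $x,y,v_1,z$ in a carefully chosen order), leaving only $d_G(v_1)=7$, $G_C=Q_1$, $v_1$ interior, and $G'_{C'}\cong Q_4$. Symmetrizing to $v_3$ and analysing the resulting configuration (Figure~\ref{figC}) yields two further Lemma~\ref{lem-a} reductions that finish the proof. So the conclusion ``not every $u$ is bad'' is obtained as a contradiction from the combined structure forced by two bad choices, not from a local uniqueness statement about $D$; your proposal defers precisely this deduction to an unspecified ``exhaustive verification''. (Two smaller points: $X_C\cap B$ may be empty, since $C$ need not be exposed, so ``pick $u\in X_C\cap B$ of degree $4$'' is not available in general; and the worry that $T_C$ has neighbours in $\ext(C)$ is unfounded, since $T_C=\inn_G(C)$.)
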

\begin{proof}
	Suppose  the lemma fails for some special cycle $C$ of $G$ with $|E(C)|=k$.
	Then $G_C$ is isomorphic to a graph $Q\in \mathcal Q$.
	We may assume $G_{C}=Q$. 
	Then $V(T_{C})$ is collectable 
	in $G-v_1$.  Put $Y=V(T_C) \cup \{v_1\}$ and $G'=G-Y$. 
	Since $G'$ has a type-c special cycle $C'$, 
	$G'_{C'}$ has a facial cycle $C''$ with $Y= V(\inn_{G}(C''))$.  
	
	Then $C''$ consists of the subpath $C-v_1$ from $v_2$ to $v_k$ of length $k-2$
	and a path $P$ from $v_k$ to $v_2$ in $G'$.   As $G'_{C'}$ is
	special, $3\le |E(C'')|\le5$. So $|E(P)|\le5-(k-2)\le4$. 
	Now $N_G(v_{1})\subseteq V(P)\cup\{y,z\}$,
	so $d_{G}(v_{1})\le |E(P)|+3\le 10-k\le7$. 
	If $d_{G}(v_{1})\le6$, then after deleting $v_{2}$ we can collect $Y$: use the order $x,y,v_{1},z$ if $d_{G}(v_{1})\le 5$; else $d_{G}(v_{1})=6$ and $k\le4$, so use the order $x,y,z,v_{1}$. This contradicts
	Lemma~\ref{lem-a}. 
	Thus $d_{G}(v_{1})=7$. So $k=3$, $|E(P)|=4$, $|E(C'')|=5$, 
	$G_{C}=Q_{1}$, and $v_1$ is adjacent to all vertices of $P$.
	
	Setting $u=v_{3}$, and using symmetry between $v_1$ and $v_3$, 
	we see that $v_{3}$ is also an interior vertex with $d_{G}(v_{3})=7$. 
	
	Let $P=v_{3}u_{1}u_{2}u_{3}v_{2}$. 
	Now $G'_{C'}$ is isomorphic to $Q_{4}$ since $C''$ is a facial $5$-cycle. Assume $u\mapsto u'$ is
	an isomorphism from  $Q_{4}$ to $G'_{C'}$. 
	Then $C''=z'v'_{3}v'_{4}v'_{5}v'_{1}z'$.
	
	If there is
	$w\in \{v_{2},v_{3}\}\cap \{v'_{1},v'_{3}\}$ then after deleting $w$ 
	we can collect $\{x,y,z,x',y',z'\}$, contrary to  Lemma~\ref{lem-a}. Else 
	$\{v'_1,v'_3\}=\{u_1,u_3\}$ and therefore $z'=u_{2}$, see Figure~\ref{figC}.
	After deleting $\{v_{2},u_{1}\}$, we can collect $\{x,y,z,v_{3},v_{1},z',x',y'\}$, 
	contrary to Lemma~\ref{lem-a}, as both $v_1$ and $v_3$ are interior vertices.
\end{proof}

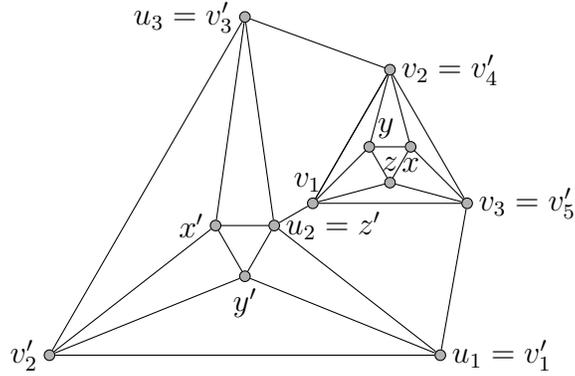
\begin{figure}
	\centering
\begin{tikzpicture}[scale=1]
	\tikzstyle{every label}=[label distance=2pt,rectangle,fill=none,draw=none]
	\tikzstyle{every node}=[circle,draw,fill=black!30,inner sep=0pt,minimum width=4pt]
	\node [label=left:{$u_3=v_3'$}] at (90:3) (x) {};
	\node [label=left:$v_2'$] at (210:3)(y) {};
	\node [label=right:{$u_1=v_1'$}] at (330:3)(z) {};
	\node [label=right:{$v_3=v_5'$}] at (10:3)(v) {};
	\node [label=right:{$v_2=v_4'$}] at (50:3) (u){};
	\node  [label=left:$x'$] at (150:.45) (v0) {};
	\node  [label=below:$y'$] at (150+120:.45) (v1) {};
	\node  [label={right:{$u_2=z'$}}] at (150+240:.45) (v2) {};
	\draw (u)--++ (-120:{6*sin(20)}) node [label=above:$v_1~$] (vv1){};
	\draw (vv1)--++(45:{3*sin(20)/cos(15)}) node [label=above right:$y$] (yy){}--(u);
	\draw (u)--++(-75:{3*sin(20)/cos(15)}) node [label=below:$x$] (xx){}--(v);
	\draw (v)--++(165:{3*sin(20)/cos(15)}) node [label=above:$z$] (zz){}--(vv1);
	\draw (xx)--(yy)--(zz)--(xx);
	\draw (v1)--(v2)--(v0)--(v1);
	\draw (v0)--(y)--(v1)--(z)--(v2)--(x)--(v0);
	\draw (x)--(y)--(z);
	\draw (x)--(u)--(v)--(z);
	\draw (v2)--(vv1)--(u) (vv1)--(v);
\end{tikzpicture}
\caption{The graph $\inn_{G}(C'')$ in the last part of the proof of Lemma~\ref{lem-typec} when $z'=u_2$.
Note that $d_G(v_3)=7$ and $v_3$ is an interior vertex.}\label{figC}
\end{figure}

	\begin{lemma}
	\label{lem-nospecialcycle}
	$G$ has no  special cycle.
	\end{lemma}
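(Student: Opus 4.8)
Suppose for contradiction that $G$ has a special cycle $C$; the plan is to delete and collect the vertices of $G_C=\inn_G[C]$ so as to violate the minimality of the extreme counterexample $(G;A)$. By Lemma~\ref{lem-typec} there is a vertex $u\in X_C$ such that $V(T_C)$ is collectable in $G-u$ and $G':=G-(V(T_C)\cup\{u\})$ has no type-c special cycle. Set $Y:=V(T_C)\cup\{u\}$; then $|Y|=4$, $\delta(Y)=0$, and $G[Y]$ is connected, since $T_C\cong K_3$ and $u\in X_C$ is adjacent to at least two vertices of $T_C$. Because $A$ is usable in $G$, Lemma~\ref{lem:reduction} shows $A\cap V(G')$ is usable in $G'$, so by minimality $f(G';A)\ge\partial(G')$; fix a maximum $A$-good set $S$ of $G'$. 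Since the three vertices of $T_C$ are interior (hence not in $A$) and $V(T_C)$ is collectable in $G[S\cup V(T_C)]$ (an induced subgraph of $G-u$, in which $V(T_C)$ is collectable), the set $S\cup V(T_C)$ is $A$-good in $G$, so $f(G;A)\ge |S|+3=f(G';A)+3\ge\partial(G')+3$.

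Next I would feed $Y$ into Lemma~\ref{lem-taug'}. As the vertices of $T_C$ are interior, $|B-B(G')|=[u\in B]\le 1$, and with $\delta(Y)=0$ this gives $\partial(G)\le\partial(G')+3+\frac14|B-B(G')|$. If $u\notin B$ we already get $\partial(G)\le\partial(G')+3\le f(G;A)$, contradicting that $(G;A)$ is a counterexample; so assume $u\in B$ and $|B-B(G')|=1$. If moreover $C$ is exposed, then $C$ is itself an exposed special cycle with $Y_C\cap Y\supseteq V(T_C)\ne\emptyset$, and by Lemma~\ref{two-special-cycle} $Y_C$ is disjoint from $Y_{C'}$ for every other special cycle $C'$; hence the ``moreover'' inequality of Lemma~\ref{lem-taug'} applies and yields $\partial(G)\le\partial(G')+3+\frac14(|B-B(G')|-1)\le\partial(G')+3\le f(G;A)$, again a contradiction.

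It remains to treat the case where $u\in B$ and $C$ is not exposed, which I expect to be the crux. Since $C$ is not exposed, $X_C$ contains an interior vertex $w$, and $w\ne u$ because $u\in B$. Now $\{u\}$ and $V(T_C)\cup\{w\}$ are disjoint, $V(T_C)\cup\{w\}$ consists of $4=4|\{u\}|$ interior vertices, and $G[\{u\}\cup V(T_C)\cup\{w\}]$ is connected (via $T_C$), so by Lemma~\ref{lem-a} it suffices to show that $V(T_C)\cup\{w\}$ is collectable in $G-u$. I would collect $V(T_C)$ first (possible in $G-u$ by Lemma~\ref{lem-typec}) and then $w$; a vertex of $X_C$ has at least two neighbours in $T_C$ (and, when it has exactly two, it is one of $v_1,v_3$, and otherwise it is $v_2$, which is then adjacent to $u$), so after deleting $u$ and collecting $V(T_C)$ the residual degree of $w$ is $d_G(w)-|N_G(w)\cap V(T_C)|-[w\sim u]$, and the statement reduces to a degree inequality of the form $d_G(w)\le 5+[w\sim u]$.

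The decisive point, and the main obstacle I foresee, is verifying this degree inequality for the interior vertex $w\in X_C$ in the non-exposed case: one must control the degree of $w$ in $G$, knowing only that its degree inside $G_C$ is bounded (because $G_C$ is one of the seven graphs of $\mathcal Q$) but that $w$ lies on the separating cycle $C$ and may have further neighbours outside it. The tools at hand are Observation~\ref{obs1}, which pins down the adjacencies of each vertex of $C$ to $T_C$ and the equalities $d_G(x)=d_G(y)=4$, together with Lemma~\ref{d4} ($d_G\ge 4$ off $A$) and the fact that in the octahedron $Q_1$ one has enough symmetry to take $u$ itself interior whenever $C$ has an interior vertex, reducing the non-exposed case to the less symmetric graphs of $\mathcal Q$ and a short finite check. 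Once the inequality is established, Lemma~\ref{lem-a} is contradicted, and we conclude that $G$ has no special cycle. The remaining cases ($u\notin B$, and $u\in B$ with $C$ exposed) are by contrast routine consequences of Lemmas~\ref{lem-typec}, \ref{lem-taug'} and~\ref{two-special-cycle}.
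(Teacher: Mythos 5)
Your first two cases ($u\notin B$, and $u\in B$ with $C$ exposed) coincide with the paper's argument and are fine. The genuine gap is exactly where you flag it: the case $u\in B$ with $C$ not exposed. There you want to apply Lemma~\ref{lem-a} with $X=\{u\}$ and $Y=V(T_C)\cup\{w\}$ for an interior vertex $w\in X_C$, which requires $V(T_C)\cup\{w\}$ to be collectable in $G-u$, i.e.\ a bound of the shape $d_G(w)\le 5+[w\sim u]$. No such bound is available: at this point of the paper there is no upper bound on the degree of \emph{interior} vertices (Lemma~\ref{deg<6} bounds only boundary vertices and is proved later, using consequences of the present lemma), and $w$, lying on the separating cycle $C$, may have arbitrarily many neighbours in $\ext(C)$. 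Knowing $G_C\in\mathcal Q$ only controls the edges of $w$ inside $G_C$, so the ``short finite check'' you hope for cannot close this; the collectability of $w$ simply may fail. Your side remark that in the $Q_1$ case one can ``take $u$ itself interior'' is also not justified: Lemma~\ref{lem-typec} asserts the existence of some $u\in X_C$ with the two stated properties, and you are not free to replace it by another vertex of $X_C$ without re-verifying that $G-(V(T_C)\cup\{u\})$ has no type-c special cycle.

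The paper closes this case without any degree bound, by a packing-counting argument: since $C$ is not exposed, $X_C$ contains an interior vertex $v$, which becomes exposed in $G'=G-(V(T_C)\cup\{u\})$ because it is adjacent to a deleted vertex of $T_C$. Every exposed special cycle $C'$ of $G'$ is a special cycle of $G$ (as $G'$ has no type-c cycles), so Lemma~\ref{two-special-cycle} gives $Y_{C'}\cap Y_C=\emptyset$ and in particular $v\notin X_{C'}$. Hence in an optimal packing of $G'$ at most $\abs{B(G')-B}-1$ cycles fail to be exposed in $G$, which yields $\tau(G)\ge\tau(G')-\abs{B(G')-B}+1$ and then $\partial(G)\le\partial(G')+3$, giving the contradiction via $f(G;A)\ge f(G';A)+3$. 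Replacing your Lemma~\ref{lem-a} attempt by this refinement of the $\tau$-estimate is what is missing from your proposal.
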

	\begin{proof}
	Assume to the contrary that $C$ is a   special cycle of $G$. 
	By Lemma~\ref{lem-typec}, there is a vertex   $u \in X_{C}$ such that 
	$V(T_C)$ is collectable in $G-u$ and $G'=G-(V(T_C)\cup\{u\})$ has no type-c special cycle. Observe that $f(G;A)\ge f(G';A)+3$. So it suffices to show that $\partial(G)\le \partial(G')+3$.
	Since $G'$ has no type-c special cycles, 
	every exposed special cycle of $G'$ is a special cycle of $G$. 
	
	If $u\notin B$, then $B(G')=B$ and so $B-B(G')=\emptyset$.
	As $\delta(V(T_C)\cup\{u\})=0$, we deduce from Lemma~\ref{lem-taug'} that
	$\partial(G)\le \partial(G')+\frac{3}{4}\cdot 4$. 
	
	Thus we may assume that $u\in B$ and so 
	$\abs{B-B(G')}=1$.
	If $C$ is exposed in $G$, then by Lemmas~\ref{lem-taug'} and \ref{two-special-cycle}, 
	$\partial(G)\le \partial(G')+\frac34\cdot 4+\frac{1-1}{4}$.
	
	If $C$ is not exposed in $G$, then $X_C$ has some interior vertex $v$.
	Since $v$ is adjacent to a vertex of $T_C$, $v$ is exposed in $G'$. 
	By Lemma~\ref{two-special-cycle}, 
	$v\notin X_{C'}$ for every exposed special cycle $C'$ of $G'$, 
	because $C'$ is a special cycle of $G$.
	Therefore, in an optimal special cycle packing of $G'$, 
	at most $\abs{B(G')-B}-1$ of the cycles are not exposed in $G$. 
	So, 
	\[\tau(G)\ge \tau(G')-(\abs{B(G')-B}-1)
	=\tau(G')-(\abs{B(G')}-\abs{B}+1)+1.\]	
	Thus
	\begin{align*}
		\partial(G)&=\frac34\abs{V(G)}+\frac14(|B|-\tau(G))\\
		&\le  \frac34(\abs{V(G')}+ 4)+\frac14(|B|+(-\tau(G')+\abs{B(G')}-\abs{B}))
		=\partial(G')+3.	\qedhere	
	\end{align*}
\end{proof}

\begin{lemma}
	\label{cor-red}
	Let $s$ be an integer.
	Let $X$ and $Y$ be disjoint subsets of $V(G)$ such that 
	$Y$ is collectable in $G-X$.
	If $|B(G-(X\cup Y))| \ge |B(G)|+s$, $G[X \cup Y]$ is connected, and $(X \cup Y) \cap B(G) \ne \emptyset$,
	then $s+|Y| < 3|X|$.
\end{lemma}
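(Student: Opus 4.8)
The plan is to argue by contradiction from the extremality of $(G;A)$, in the style of Lemmas~\ref{lem-a} and~\ref{lem-nospecialcycle}. I would set $G'=G-(X\cup Y)$; since $(X\cup Y)\cap B(G)\neq\emptyset$ we have $X\cup Y\neq\emptyset$, so $\abs{V(G')}<\abs{V(G)}$ and the minimality of $(G;A)$ gives $f(G';A)\ge\partial(G')$. Taking an $A$-good set $W\subseteq V(G')$ with $\abs W=f(G';A)$ and using that $Y$ is collectable in $G-X$ (hence, since collectability passes to induced subgraphs containing $Y$, also in $G[W\cup Y]$), collecting $Y$ on top of $W$ shows that $W\cup Y$ is $A$-good in $G$; thus $f(G;A)\ge f(G';A)+\abs Y\ge\partial(G')+\abs Y$. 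As $G$ is a counterexample, $f(G;A)<\partial(G)$, so everything reduces to comparing $\partial(G)$ with $\partial(G')$.

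The key point, and the only nontrivial one, is that $\tau(G)=\tau(G')=0$. The first equality is Lemma~\ref{lem-nospecialcycle}. For the second I would show that every special cycle of $G'$ is already a special cycle of $G$: if $C$ is a cycle of $G'$ then $V(C)\cap(X\cup Y)=\emptyset$, and since a vertex strictly inside $C$ is not incident with the outer face of $G$, no vertex of $B(G)$ lies in $\inn_G(C)$; as $X\cup Y$ meets $B(G)$, is connected, and avoids $V(C)$, it lies entirely in $\ext_G(C)$. Hence deleting $X\cup Y$ changes nothing inside the disc bounded by $C$, so $\inn_G(C)=\inn_{G'}(C)$, and since $G'$ is an induced subgraph of $G$ the chords of $C$ drawn outside $C$ are unchanged, giving $\inn_G[C]=\inn_{G'}[C]$. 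Thus a special cycle of $G'$ would be a special cycle of $G$, contradicting Lemma~\ref{lem-nospecialcycle}; so $G'$ has no special cycle and $\tau(G')=0$.

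With $\tau(G)=\tau(G')=0$ we have $\partial(G)=\frac34\abs{V(G)}+\frac14\abs{B(G)}$ and $\partial(G')=\frac34\abs{V(G')}+\frac14\abs{B(G')}$. Substituting these into $\partial(G')+\abs Y<\partial(G)$, using $\abs{V(G)}=\abs{V(G')}+\abs X+\abs Y$ and clearing the denominator $4$, yields
\[\abs{B(G')}+\abs Y<\abs{B(G)}+3\abs X,\qquad\text{i.e.}\qquad \abs Y+\bigl(\abs{B(G')}-\abs{B(G)}\bigr)<3\abs X .\]
By hypothesis $\abs{B(G')}=\abs{B(G-(X\cup Y))}\ge\abs{B(G)}+s$, and therefore $s+\abs Y<3\abs X$, as required.

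I expect the planarity argument for $\tau(G')=0$ — that removing a connected, boundary-meeting set cannot expose the interior of any cycle and hence cannot manufacture a new special cycle — to be the only step requiring genuine care; everything else is bookkeeping with the definition of $\partial$ together with the facts, already established in the extreme counterexample, that $\tau(G)=0$ and that $f(G';A)\ge\partial(G')$ for the smaller graph $G'$.
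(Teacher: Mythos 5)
Your proposal is correct and follows essentially the same route as the paper: pass to $G'=G-(X\cup Y)$, use extremality to get $f(G';A)\ge\partial(G')$, gain $\abs{Y}$ by collecting $Y$, and observe that since $G[X\cup Y]$ is connected and meets $B(G)$ it lies outside every cycle of $G'$, so every special cycle of $G'$ is one of $G$ and hence $\tau(G')=0$ by Lemma~\ref{lem-nospecialcycle}, after which the $\partial$ bookkeeping yields $s+\abs{Y}<3\abs{X}$. Your planarity argument just spells out the step the paper states tersely.
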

\begin{proof}
	Let $G'=G-(X\cup Y)$. 
	Since $(X \cup Y) \cap B(G) \ne \emptyset$ and $G[X \cup Y]$ is connected,
	any special cycle of $G'$ is also a special cycle of $G$. So $\tau(G')=0$
	and $\partial(G) \le \partial(G') + \frac34 (|X\cup Y|) - \frac{s}4$. 
	As $X\cup Y\neq \emptyset$ and $(G;A)$ is extreme, $f(G';A)\ge \partial(G')$.
	Thus
	\[f(G';A)+|Y|\le f(G;A)<\partial(G) \le \partial(G') + \frac34 |X\cup Y| - \frac{s}4\le f(G';A) + \frac34 |X\cup Y| - \frac{s}4.\]
	This implies that $s+|Y|< 3|X|$.
\end{proof}

\begin{lemma}
  \label{cutvertex}
  $G$ is $2$-connected and $|A|=2$.
\end{lemma}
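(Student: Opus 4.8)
The plan is to prove the two assertions in order, first reducing the bound on $|A|$ to the connectivity statement. Since $G$ is connected, $A$ is the vertex set of an admissible path $v_1v_2\cdots v_\ell$ in $\mathbf B(G)$, so $|A|=\ell$; if $\ell\ge 3$ then $1<2<\ell$, so Definition~\ref{admissible} forces $G-v_2$ to have no $v_1$–$v_3$ path, while $v_1v_2,v_2v_3\in E(G)$ and $|V(G)|>2$, so $v_2$ would be a cut-vertex. Hence once $G$ is known to be $2$-connected we get $|A|\le 2$, and the real content is to prove $2$-connectivity and then to exclude $|A|\le 1$.

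For $2$-connectivity I would suppose $v$ is a cut-vertex and write $G=G_1\cup G_2$ with $V(G_1)\cap V(G_2)=\{v\}$ and $2\le |V(G_i)|<|V(G)|$, choosing the partition of the components of $G-v$ so that the split is compatible with the plane embedding. Setting $A_i=A\cap V(G_i)$, each $A_i$ is the vertex set of a sub-path of $A$ lying on $\mathbf B(G_i)$, hence usable in $G_i$, and $\{v\}$ is usable in $G_i$ since $v\in B(G_i)$; so minimality of $(G;A)$ gives $f(G_i;A_i)\ge\partial(G_i)$ and $f(G_i;\{v\})\ge\partial(G_i)$. The plan is to glue these into an $A$-good subset of $G$ of size at least $\partial(G_1)+\partial(G_2)-1$: on the side carrying most of $A$ take an $A_i$-good set, on the other a $\{v\}$-good set together with an ordering in which $v$ is first, and splice along $v$; because there are no edges between $V(G_1)\setminus\{v\}$ and $V(G_2)\setminus\{v\}$ and $v$ is placed below the whole of that $\{v\}$-good set, every non-$A$ vertex keeps exactly the back-neighbours it had on its own side (one must also decide, according to whether $v$ lies in both chosen sets, whether to keep $v$ in the union, which costs at most $1$). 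This yields $f(G;A)\ge\partial(G_1)+\partial(G_2)-1$, so it remains to check $\partial(G_1)+\partial(G_2)-1\ge\partial(G)$.

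That last inequality is where the embedding bookkeeping bites, and I expect it to be the main obstacle. Using $\tau(G)=0$ from Lemma~\ref{lem-nospecialcycle}, and noting that a special cycle of a piece that does not enclose the other piece is still special in $G$: when the split respects the outer face one has $|B(G)|=|B(G_1)|+|B(G_2)|-1$ and $\tau(G_1)=\tau(G_2)=0$, so $\partial(G_1)+\partial(G_2)-1=\partial(G)$; in the remaining ``nested'' configuration, where say $G_2$ lies inside an inner face of $G_1$, one has $|B(G)|=|B(G_1)|$, $\tau(G_2)=0$, and $\tau(G_1)\le 1$ (no graph in $\mathcal Q$ contains a proper special cycle, so at most one special cycle of $G_1$ can surround that face), giving $\partial(G_1)+\partial(G_2)-1-\partial(G)=\tfrac14\bigl(|B(G_2)|-\tau(G_1)-1\bigr)>0$ since the inner piece has at least three boundary vertices by Lemma~\ref{d4}. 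Either way $f(G;A)\ge\partial(G)$, contradicting the choice of $(G;A)$; hence $G$ is $2$-connected and $|A|\le 2$.

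Finally, to force $|A|=2$ I would rule out $|A|\le 1$. As $G$ is $2$-connected with $|V(G)|>2$, the boundary $\mathbf B(G)$ is a cycle, so we may pick an edge $v_1v_2$ of $\mathbf B(G)$ with $A\subseteq\{v_1,v_2\}=:A'$; then $A'$ is the vertex set of a length-$1$ path in $\mathbf B(G)$, which has no internal vertices and so is admissible, hence usable, with $|A'|=2$. Moreover $f(G;A')\le f(G;A)$: given a $(3,A')$-degenerate induced subgraph $G[Y]$, eliminate its non-$A'$ vertices as witnessed, leaving $G[Y\cap A']$ on at most two vertices, and then eliminate the vertices of $Y\cap(A'\setminus A)$, each of degree at most $1$, which exhibits $G[Y]$ as $(3,A)$-degenerate. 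Therefore $f(G;A')\le f(G;A)<\partial(G)$, so $(G;A')$ is again a counterexample with the same minimum number of vertices but with $|A'|>|A|$, contradicting the maximality of $|A|$. Hence $|A|\ge 2$, and with $|A|\le 2$ we conclude $|A|=2$.
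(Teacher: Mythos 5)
Your overall strategy coincides with the paper's (split at a cut-vertex, apply extremality to the two pieces, glue, compare $\partial$, and then use criterion (ii) to force $|A|=2$), and both your $\partial$-bookkeeping for the two embeddings of the split and your final enlargement argument for $|A|=2$ are sound. The gap is in the gluing step, and it occurs precisely in the case you cannot avoid: when the cut-vertex $v$ lies in $A$ and $A$ has vertices in both pieces, i.e.\ when $v$ is an internal vertex of the admissible path $A$. (This is the essential case: by Definition~\ref{admissible} every internal vertex of $A$ is a cut-vertex, so ruling out $|A|\ge 3$ forces you through exactly this configuration.) There you take a $\{v\}$-good set $X_1$ on the side with fewer $A$-vertices, but that side still contains vertices of $A\setminus\{v\}$, and your claim that in the spliced ordering ``every non-$A$ vertex keeps exactly the back-neighbours it had on its own side'' fails: the vertices of $(A\cap V(G_1))\setminus\{v\}$ must be moved into the initial $A$-block, which can create new back-neighbours for non-$A$ vertices of $X_1$. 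More fundamentally, a $\{v\}$-good set need not be $(A\cap V(G_1))$-good at all, because enlarging the exempt set is not a weakening of the condition: exempt vertices may no longer be peeled. For instance, delete one edge $x_1y_1$ from the octahedron and add a new vertex $a$ adjacent to $x_1$ and $y_1$; the resulting plane graph is $3$-degenerate (hence $\emptyset$-good), yet every vertex other than $a$ has degree $4$, so it is not $\{a\}$-good. Hence no reordering of your spliced set is guaranteed to witness $A$-goodness, and discarding the offending $A$-vertices would destroy the count $|X|\ge |X_1|+|X_2|-1$ that you need.

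The repair is what the paper does: when $v\in A$, take $A_i=A\cap V(G_i)$ on \emph{both} sides (you already noted $f(G_i;A_i)\ge\partial(G_i)$ holds by minimality), set $X=(X_1\cup X_2-\{v\})\cup(X_1\cap X_2)$, and verify $A$-goodness by a collection order (collect $X_1-A$, then $X_2-A$, leaving $A\cap X$); this works because every $A$-vertex present on either side is exempt in that side's certificate, and every neighbour of a $G_1$-side vertex other than $v$ lies in $G_1$. With that change, the rest of your write-up -- the usability of the restricted sets, the dichotomy between the split that respects the outer face and the nested split with $\tau$ of the outer piece at most $1$ and $|B|\ge 3$ for the inner piece via Lemma~\ref{d4}, and the maximality argument showing that any $(3,A')$-degenerate set with $A\subseteq A'$, $|A'|=2$ is $(3,A)$-degenerate -- goes through as in the paper.
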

\begin{proof}
	Suppose $G$ is not $2$-connected.
	If $\abs{V(G)}\le 3$, then
	$G$ is $(3,A)$-degenerate, so $f(G;A)=\partial(G)$ and we are done.
	Else $\abs{V(G)}> 3$. As $G$ is connected, it has a cut-vertex~$x$. 
	Let $G_1$, $G_2$ be subgraphs of $G$ such that $G=G_1 \cup G_2$,
	$V(G_1) \cap V(G_2)=\{x\}$, and $|V(G_{1})\cap A|\le |V(G_{2})\cap A|$.
	Observe that if $x\notin A$, then $A\cap V(G_1)=\emptyset$ by the choice of $G_1$ because $A$ is usable in $G$.

	Let $A_1=V(G_1)\cap A$ if $x\in A$ and $A_1=\{x\}$ otherwise.
	Let $A_2=V(G_2)\cap A$.
	Note that for each $i=1,2$,
	$A_i$ is usable in $G_i$.   
	For $i=1,2$, 
	let $X_i$ be a maximum $A_i$-good set in $G_i$.

	Let  $X:=(X_1 \cup X_2 - \{x\}) \cup (X_1 \cap X_2)$.   
	We claim that $X$ is $A$-good in~$G$.
	If $x\in A$, then collect $X_1-A$, $X_2-A$, $A\cap X$.
	If $x\notin A$ and $x\in X_1\cap X_2$, then collect $X_1-\{x\}$, $X_2$.
	If $x\notin A$ and $x\notin X_1\cap X_2$, then collect $X_1-\{x\}$, $X_2-\{x\}$.
	This proves the claim that $X$ is $A$-good in $G$.
	
As $(G;A)$ is extreme, $f(G_i;A_i) \ge \partial(G_i)$ for $i=1,2$.  

If $x \in B$ then $B(G_i)=B(G)\cap V(G_i)$ for $i=1,2$. 
Note that any special cycle of $G_i$ is a special cycle of $G$ and  
so $\tau(G_i)=0$ for $i=1,2$
by Lemma~\ref{lem-nospecialcycle} and hence
 $\partial(G) = \partial(G_1)+ \partial(G_2) - 1$. 

 If $x \notin B$, then we may assume $V(G_1)\cap B(G)=\emptyset$.
 Hence $B(G)=B(G_2)$.
 Since only one inner face of $G_2$ contains vertices of $G_1$,
 $\tau(G_2)\le 1$ by Lemma~\ref{lem-nospecialcycle}.
 Note that \[\partial(G)=\partial(G_1)+\partial(G_2)-\frac{3}{4}
 +\frac{1}{4}\tau(G_2)
 -\frac{1}{4}(\abs{B(G_1)}-\tau(G_1)).\]
 Since $\tau(G_1) \le |B(G_1)|-2$, we have 
 $\partial(G) \le \partial(G_1)+ \partial(G_2) - 1$.

In both cases, we have the contradiction:
\[
  f(G;A)%
  \ge\abs{X_1}+\abs{X_2}-1= f(G_1;A_1)+f(G_2;A_2)-1\ge \partial(G_1)+ \partial(G_2) - 1\ge \partial(G).
\]
Thus $G$ is $2$-connected, and hence   $|A| \le 2$. As $(G;A)$ is extreme,
 we have $|A|=2$.
\end{proof}

In the following, set  $A=\{a,a'\}$.
 
\begin{lemma}\label{nochord}
The boundary cycle $\mathbf{B}$ has no chord.
\end{lemma}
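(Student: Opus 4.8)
The plan is to suppose, for contradiction, that $\mathbf B$ has a chord $e=xy$, split $G$ along $e$ into two smaller plane graphs, apply Theorem~\ref{main} inductively to each, and recombine the resulting induced $3$-degenerate subgraphs. Let $P_1,P_2$ be the two $x$--$y$ arcs of the cycle $\mathbf B$, chosen so that the edge $aa'$ lies on $P_1$; this is possible because $aa'\in E(\mathbf B)$ (as $A$ is usable in the connected graph $G$ and $|A|=2$) and $E(\mathbf B)=E(P_1)\cupdot E(P_2)$. Put $C_i=P_i+e$ and $G_i=\inn_G[C_i]$. Then $G=G_1\cup G_2$, $V(G_1)\cap V(G_2)=\{x,y\}$, $E(G_1)\cap E(G_2)=\{e\}$, each $G_i$ equals $G[V(G_i)]$ (no edge of $G$ between two vertices of $V(G_i)$ can leave the disk bounded by $C_i$), and $\mathbf B(G_i)=C_i$; in particular $\{a,a'\}$ is usable in $G_1$ and $\{x,y\}$ is usable in $G_2$. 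Since $x,y$ are non-adjacent on $\mathbf B$ (else $e\in E(\mathbf B)$), both arcs have internal vertices, so $3\le|V(G_i)|<|V(G)|$. Every special cycle of $G_i$ is a special cycle of $G$ (its interior in $G_i$ coincides with its interior in $G$), so Lemma~\ref{lem-nospecialcycle} gives $\tau(G_1)=\tau(G_2)=\tau(G)=0$; together with $|V(G_1)|+|V(G_2)|=|V(G)|+2$ and $|B(G_1)|+|B(G_2)|=|B|+2$ this yields $\partial(G_1)+\partial(G_2)=\partial(G)+2$. By minimality of $(G;A)$, Theorem~\ref{main} applies to $(G_1;\{a,a'\})$ and $(G_2;\{x,y\})$, producing $X_1\subseteq V(G_1)$ with a witnessing order $L_1$ showing $X_1$ is $\{a,a'\}$-good in $G_1$ and $|X_1|\ge\partial(G_1)$, and likewise $X_2\subseteq V(G_2)$, $L_2$, $|X_2|\ge\partial(G_2)$ with $X_2$ being $\{x,y\}$-good in $G_2$. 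The goal will then be to assemble an $A$-good set $X\subseteq V(G)$ with $|X|\ge\partial(G)$.

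\textbf{The main obstacle is the shared pair $\{x,y\}$.} When one joins $X_1$ and $X_2$, a vertex of $\{x,y\}$ lying in exactly one $X_i$ can destroy the back-degree bound of its neighbours across the cut; but simply discarding $x$ and $y$ from both sides loses two vertices, exactly consuming the surplus $\partial(G_1)+\partial(G_2)=\partial(G)+2$. The resolution I would use is to force the two sets to agree on $\{x,y\}$ at a cost that is exactly matched by this surplus. Put $S_i=X_i\cap\{x,y\}$. Since $\{x,y\}=A_2$, removing $S_2\setminus S_1$ from $X_2$ keeps it $\{x,y\}$-good, and removing $S_1\setminus S_2$ from $X_1$ keeps it $\{a,a'\}$-good, so we may assume $X_1\cap\{x,y\}=X_2\cap\{x,y\}=:S$. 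Then $X_1\cap X_2=S$, and with $X=X_1\cup X_2$ one gets $|X|=|X_1|+|X_2|-|S|\ge\partial(G_1)+\partial(G_2)-|S_1\setminus S_2|-|S_2\setminus S_1|-|S|=\partial(G)+2-|S_1\cup S_2|\ge\partial(G)$, because $S_1\cup S_2\subseteq\{x,y\}$.

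It remains to certify that $X$ is $A$-good in $G$. I would first choose $L_2$ so that it induces the same order on $\{x,y\}$ as $L_1$ (free, since $x,y$ precede $V(G_2)\setminus\{x,y\}$ in $L_2$), and then take a common linear extension $L\in\Pi(G)$ of $L_1|_{V(G_1)}$ and $L_2|_{V(G_2)}$ in which $\{a,a'\}$ comes first; such $L$ exists since $L_1,L_2$ agree on $V(G_1)\cap V(G_2)=\{x,y\}$, at most one of $a,a'$ lies in $V(G_2)$, and any such vertex lies in $A_2$ and hence precedes $V(G_2)\setminus\{x,y\}$ in $L_2$. Now $X\cap V(G_1)=X_1$ and $X\cap V(G_2)=X_2$ (as $S\subseteq X_1\cap X_2$) and $L|_{V(G_i)}=L_i$, so the back-degree check splits into three easy cases: if $v\in(X-A)\cap(V(G_1)\setminus\{x,y\})$ then $N_G(v)\subseteq V(G_1)$ and $d^{L}_{G[X]}(v)=d^{L_1}_{G_1[X_1]}(v)\le3$; symmetrically for $v\in(X-A)\cap(V(G_2)\setminus\{x,y\})$; and if $v\in\{x,y\}\cap X$ with $v\notin A$, then since $\{x,y\}\subseteq A_2$ every $V(G_2)$-neighbour of $v$ outside $\{x,y\}$ follows $v$ in $L$, so all back-neighbours of $v$ in $G[X]$ lie in $V(G_1)$ and again $d^{L}_{G[X]}(v)=d^{L_1}_{G_1[X_1]}(v)\le3$. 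Hence $X$ is $A$-good, $f(G;A)\ge|X|\ge\partial(G)$, contradicting that $(G;A)$ is a counterexample; so $\mathbf B$ has no chord.

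Apart from the bookkeeping of $\{x,y\}$ described above, the only point needing a little care is the case where a chord endpoint coincides with $a$ or $a'$: there $A_1=A$ still works and $A_2=\{x,y\}$ now contains a vertex of $A$, so one must verify that the common extension $L$ can still be taken with $A$ first — which is precisely why $L_1$ and $L_2$ were chosen to agree on $\{x,y\}$.
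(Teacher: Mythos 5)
Your proof is correct and takes essentially the same route as the paper: split $G$ along the chord into $\inn[P_1+e]$ and $\inn[P_2+e]$, use $A$ and $\{x,y\}$ as the usable sets for the two pieces, apply minimality of $(G;A)$ to both, and merge the two good sets losing at most two vertices, which is exactly absorbed by $\partial(G_1)+\partial(G_2)=\partial(G)+2$ (using $\tau(G_1)=\tau(G_2)=0$ from Lemma~\ref{lem-nospecialcycle}). Your explicit trimming of $X_1,X_2$ to agree on $\{x,y\}$ and the merged ordering with $A$ first is just a more detailed rendering of the paper's one-line collection-order justification of the same set $X=(X_1\cup X_2-\{x,y\})\cup(X_1\cap X_2)$.
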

\begin{proof}
Assume $\mathbf{B}$ has a chord $e:=xy$. 
Let $P_1$, $P_2$ be the two paths from $x$ to $y$ in $\mathbf{B}$ such that $A\subseteq V(P_1)$.
Since $e$ is a chord, both $P_1$ and $P_2$ have length at least two.
 
Set $G_{1}=\inn[P_1+e]$
and $G_{2}=\inn[P_2+e]$. As $\tau(G)=0$ by Lemma~\ref{lem-nospecialcycle}, we know that $\tau(G_1)=\tau(G_2)=0$. Hence 
$\partial(G)=\partial(G_1)+\partial(G_2)-2$.
We may assume that $A \subseteq V(G_2)$. Let $A_1=\{x,y\}$ and $A_2=A$.

For $i=1,2$, let $X_i$ be a  maximum $A_i$-good set in $G_i$. 
Then $X = (X_1 \cup X_2 - \{x,y\}) \cup (X_1 \cap X_2)$ is an $A$-good set in $G$:
collect $X_1-\{x,y\}$, $(X_2-\{x,y\})\cup (X_1\cap X_2)$.
Thus
\[f(G;A) \ge f(G_1;A_1) + f(G_2;A_2) - 2\ge \partial(G_1)+\partial(G_2)-2= \partial(G),\]
contrary to the choice of $G$.
\end{proof}

\begin{lemma}
\label{lem-neartriangulation}
$G$ is a near plane triangulation.
\end{lemma}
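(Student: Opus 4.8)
The plan is to exploit condition (iii) --- maximality of $\|G\|$ --- in the choice of the extreme counterexample $(G;A)$. Suppose, for contradiction, that $G$ has an inner face $F$ that is not a triangle. By Lemma~\ref{cutvertex}, $G$ is $2$-connected, so $\partial F$ is a cycle $w_1w_2\cdots w_kw_1$ with $k\ge 4$. Since a plane graph contains no $K_5$, and since a $4$-cycle bounding a face of a plane graph cannot have both of its diagonals present (that would embed $K_4$ with a non-triangular face), there is a pair of non-adjacent vertices $u,v$ on $\partial F$; moreover, when $k\ge 6$ one may choose $u,v$ so that both arcs of $\partial F$ between them have at least three edges, since otherwise the chords joining all such pairs, which must be drawn outside $F$, would pairwise cross or form a non-planar subgraph. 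Draw $uv$ inside $F$ and put $G'=G+uv$. Then $G'$ is a simple connected plane graph, $V(G')=V(G)$, and $\mathbf B(G')=\mathbf B(G)$; in particular $A$ is still the vertex set of an admissible path of $G'$, hence usable in $G'$, and $\|G'\|=\|G\|+1$.

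I would then run the standard extremality and counting step. Because $(G;A)$ is extreme, $(G';A)$ is not a counterexample, so $f(G';A)\ge\partial(G')$. On the other hand $f(G';A)\le f(G;A)$: any ordering of $V(G')=V(G)$ witnessing that a set $Y$ is $A$-good in $G'$ also witnesses it in $G$, because deleting the edge $uv$ cannot raise any back-degree. Since $V(G)$ and $B(G)$ are unchanged by the addition, and $\tau(G)=0$ by Lemma~\ref{lem-nospecialcycle}, we have $\partial(G')=\partial(G)-\tfrac14\tau(G')$. Hence it suffices to choose $u,v$ so that $G'$ has no special cycle, i.e. $\tau(G')=0$: then $f(G;A)\ge f(G';A)\ge\partial(G')=\partial(G)$, contradicting that $(G;A)$ is a counterexample, and the lemma follows.

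So the heart of the matter is to show $G'$ has no special cycle. Suppose $C$ is one. Since $\inn_{G'}[C]$ and $\inn_G[C]$ differ only in the edge $uv$, and $G$ has no special cycle, $uv$ must be an edge of $Q:=\inn_{G'}[C]$, which is isomorphic to a member of $\mathcal Q$. Each graph in $\mathcal Q$ is a near-triangulation of the disk bounded by $C$, and by Observation~\ref{obs1} its interior triangle $T_C=xyz$ consists of interior vertices with $d_{G'}(x)=d_{G'}(y)=4$. If $uv$ is incident to $x$ or $y$, that endpoint has degree $3$ in $G=G'-uv$ and, being interior, does not lie in $A\subseteq B$, contradicting Lemma~\ref{d4}. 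If $uv$ is incident to neither $x$ nor $y$, then, since every non-boundary edge of $Q$ lies on two triangular faces of $Q$, the edge $uv$ splits $F$ into two faces, the one lying inside the disk of $C$ being a triangle; for $k\ge 6$ this contradicts our choice of $u,v$ (both arcs of $\partial F$ were made long), and for $k\in\{4,5\}$ only finitely many configurations survive. In each of these, $T_C$ is a triangle of degree-$4$ vertices of $G$ contained in a triangle $v_1v_2v_3$, so deleting a suitable $v_i$ makes $V(T_C)$ collectable in $G-v_i$; then Lemma~\ref{cor-red} with $X=\{v_i\}$, $Y=V(T_C)$, $s=0$ is violated (equivalently, Lemma~\ref{lem-a} is contradicted after adjoining one further interior vertex). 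I expect this last inspection over $\mathcal Q$ --- verifying that no completion of a $\mathcal Q$-configuration by a single added chord is compatible with the minimum-degree, no-chord, and no-special-cycle properties already established for $G$ --- to be the main obstacle; everything else is routine.
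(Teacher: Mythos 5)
Your opening is the same as the paper's: use extremality (iii), add a chord $uv$ inside a non-triangular inner face $F$ to get $G'$, note $f(G';A)\le f(G;A)$, $\mathbf B(G')=\mathbf B(G)$, and conclude from $f(G';A)\ge\partial(G')$ that the only escape is $\tau(G')>\tau(G)=0$, i.e.\ an exposed special cycle $C$ of $G'$ with $uv\in E(G'_C)$. Where you diverge is in trying to \emph{avoid} creating such a cycle by choosing the chord, and that is where the argument breaks. The pivotal claim ``every non-boundary edge of $Q$ lies on two triangular faces of $Q$'' is false: by inspection of Figure~\ref{octahedron}, $Q_2$ and $Q_3$ have a quadrilateral interior face, $Q_4^+$ has one, and $Q_4$ has a pentagonal interior face; e.g.\ in $Q_4$ the non-boundary edges $zv_1$ and $zv_3$ lie on the pentagon. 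Since the interior faces of $\inn_{G'}[C]$ are faces of $G'$, the piece of $F$ lying inside $C$ need not be a triangle at all. Concretely, the added edge can be the edge $v_3v_4$ of $C$ itself, with the quadrilateral face $v_3v_4v_1z$ of $Q_2$ (or the pentagon of $Q_4$) being one of the two pieces of $F$; this requires only that one arc of $\partial F$ between $u$ and $v$ have $3$ (or $4$) edges, which is perfectly compatible with your ``both arcs at least three edges'' choice when $k\ge 6$. Your degree argument via Lemma~\ref{d4} only rules out $uv$ incident with $x$ or $y$ (and with $z$ when $d_Q(z)=4$); it says nothing about these boundary-edge cases, nor about $uv=zv_4$ or $zv_5$ in $Q_2^+,Q_4^+,Q_4^{++}$. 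So the claimed contradiction for $k\ge6$ evaporates, and no choice of chord has been shown to avoid an exposed special cycle.

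The fallback for the remaining configurations is also not a proof. Invoking Lemma~\ref{cor-red} with $X=\{v_i\}$, $Y=V(T_C)$, $s=0$ requires $(X\cup Y)\cap B\neq\emptyset$ and $|B(G-(X\cup Y))|\ge|B|$; neither is verified (you do not even use that $C$ is exposed, which is what would put $v_i$ on $B$, and the exposure count after deleting $v_i$ and $T_C$ is exactly the delicate point). Likewise Lemma~\ref{lem-a} needs \emph{four} collectable interior vertices, and ``adjoining one further interior vertex'' is not shown to be possible. The ``finitely many configurations'' inspection is explicitly left undone, and it is not a small afterthought: the paper's proof does not try to avoid the special cycle at all, but accepts it, deletes a vertex $v\in X_C$ and collects $T_C$ using Observation~\ref{obs1}\ref{obs1collect}, uses Lemma~\ref{cor-red} to force every vertex of $(V_C\cup V(F))-(V(T_C)\cup\{v\})$ onto $B$, and then a degree count plus Lemmas~\ref{cutvertex}, \ref{nochord}, \ref{d4} and \ref{lem-nospecialcycle} shows $G$ itself would have to be triangulated or isomorphic to one of $Q_2,Q_3,Q_4,Q_4^+$ with $\mathbf B$ special --- a contradiction. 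Some argument of that kind (or a genuinely completed case analysis) is needed; as written, your proof has a real gap.
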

\begin{proof}
	By Lemma~\ref{cutvertex}, every face boundary of $G$ is a cycle of $G$.
Assume to the contrary that $G$ has an interior face $F$ which is not a triangle.
Then $V(F)$ has a pair of vertices non-adjacent in $G$ because $G$ is a plane graph.
Let $e \notin E(G)$ be an edge drawn on $F$ joining them. Then  $G'=G+e$ is a plane graph with $B(G')=B(G)$.
As $G$ is extreme, $G'$ is not a counterexample. As $f(G';A) \le f(G;A)$, we conclude $\tau(G') > \tau(G)$, and hence $G'$ has an exposed special cycle $C$ and $e$ is an edge of $G'_C$. By \ref{obs1collect} of Observation~\ref{obs1},  there is a vertex $ v \in X_C$   such that after deleting $v$, we can collect  all the three vertices of $T_C$.  In $G-(V(T_C) \cup \{v\})$, all vertices in $(V_C \cup V(F)) - (V(T_C) \cup \{v\})$ are exposed. 
By  Lemma~\ref{cor-red}, none of these vertices can be an interior vertex of~$G$, because otherwise $|B(G-(V(T_C)\cup \{v\})| \ge |B(G)|$. So all these vertices are boundary vertices of $G$. 
By Lemmas~\ref{cutvertex} and~\ref{nochord}, $G$ is $2$-connected,
$\abs{A}=2$, and 
 $B(G)$ has no chord, so $G$ has no other vertices and $\inn(B(G))=T_C$, as $v \in X_C$ is also a boundary vertex of $G$.  
By the definition of usable sets, the two vertices in $A$ are adjacent.

By Lemma~\ref{d4}, 
$\|u,V(T_C)\|\ge 2$ for every vertex $u\in B(G)-A$, and $\|w,B(G)\|\ge 2$ for every vertex $w\in V(T_C)$.   
On the other hand, the number of vertices $u\in B(G)$ with $\|u,V(T_C)\|\ge 2$ is at most $3$. So $|B(G)| \le 3+\abs{A}= 5$. 

If $|B(G)|=3$, then $G$ is triangulated.
Suppose $|B(G)|=4$. If $\|u,V(T_C)\|\ge2$ for three vertices $u\in B(G)$, then $G$ is isomorphic to $Q_2$; else $G$ is isomorphic to $Q_3$. Both are contradictions. If $|B(G)|=5$, then $G$ is isomorphic to $Q_4$ or $Q_4^+$, again a contradiction.  
\end{proof}

\section{Properties of separating  cycles}\label{sec:separating}
	 
In a plane graph $G$, a cycle $C$ is called \emph{separating} if 
both $V(\inn(C))$ and $V(\ext(C))$ are nonempty.
In this section we will discuss properties of
separating cycles in $G$.

\begin{lemma}\label{triangle}
	Suppose $T$ is a separating triangle of $G$ and let $I=\inn(T)$. Then
	\begin{enum}[label=(\alph*)]
	  \item $\|V(T),V(I)\|\ge6$, 
	  \item $|I|\ge3$,
	  \item $\|x,V(I)\|\ge 1$ for all $x\in V(T)$,
	  and
	  \item for all distinct $x$, $y$ in $V(T)$,
	  $\abs{N(\{x,y\})\cap V(I)}\ge 2$.
	 \end{enum}
\end{lemma}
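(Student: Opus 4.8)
The plan is to work inside the disc bounded by $T=xyzx$. Since $G$ is a near plane triangulation (Lemma~\ref{lem-neartriangulation}) and $T$ is a cycle of $G$, the graph $D:=\inn_G[T]$ is a triangulation of the closed disc: a plane graph with $V(D)=V(T)\cupdot V(I)$, all inner faces triangles, and outer face bounded by $T$. Writing $k:=|I|$, a routine Euler-formula computation ($2|E(D)|=3\cdot(\text{number of inner faces})+3$) gives $|E(D)|=3k+3$. I would use this identity for all four parts, and since (a), (c), (d) invoke $|I|\ge3$ I would establish (b) first.

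For (b): if $k=0$ then $V(\inn(T))=\emptyset$, contradicting that $T$ is separating. If $k=1$, then $D$ has $4$ vertices and $3k+3=6=\binom{4}{2}$ edges, so $D=K_4$ and the unique vertex $w$ of $V(I)$ has $N_G(w)=\{x,y,z\}$ (all of its edges lie inside $T$), whence $d_G(w)=3$. If $k=2$, then $D$ has $5$ vertices and $9=\binom{5}{2}-1$ edges, so exactly one pair of vertices of $D$ is non-adjacent; this pair cannot lie within the triangle $V(T)$, so it meets $V(I)$ in a vertex $w$, and then $d_D(w)=3$, hence $d_G(w)=3$. In either case $w\notin B$, and $A\subseteq B$, so Lemma~\ref{d4} forces $d_G(w)\ge4$, a contradiction; thus $k\ge3$.

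Part (a) then follows by sorting the edges of $D$ by type: $\|V(T),V(I)\|=|E(D)|-|E(T)|-\|G[V(I)]\|\ge(3k+3)-3-(3k-6)=6$, using that the simple planar graph $G[V(I)]$ on $k\ge3$ vertices has at most $3k-6$ edges. For (c), fix $x\in V(T)$, pick a neighbour $y\in V(T)$, and examine the inner face $F$ of $D$ incident with the boundary edge $xy$: it is a triangle $xyw$, and if $w$ were the third vertex $z$ of $T$ then $T$ itself would bound an inner face of $D$, forcing $V(I)=\emptyset$ and contradicting (b); so $w\in V(I)$ and $x\sim w$. For (d), suppose $|N(\{x,y\})\cap V(I)|\le1$ for distinct $x,y\in V(T)$; by (c) both $N(x)\cap V(I)$ and $N(y)\cap V(I)$ are nonempty, so both equal $\{w\}$ for a single $w\in V(I)$. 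Then $N_D(x)=\{y,z,w\}$ and $N_D(y)=\{x,z,w\}$, and reading off the inner faces around $x$ and around $y$ shows that $xyw$, $xzw$, $yzw$ are all faces of $D$; the union of these three closed triangles is a closed region bounded by $T$, hence equals the whole disc, so $V(D)=\{x,y,z,w\}$ and $|I|=1$, again contradicting (b).

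I expect the only step needing genuine care to be the final one in (d): arguing that the three faces $xyw$, $xzw$, $yzw$ must exhaust the disc. I would handle it by noting that each edge of $G$ lies in at most two of these triangles and that the edges lying in exactly one of them are precisely $xy$, $yz$, $zx$, so the boundary of the region they cover is exactly $T$; since that region is connected and contained in the disc bounded by $T$, it is the disc. Everything else is standard bookkeeping about triangulations of a disc.
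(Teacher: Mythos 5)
Your proof is correct and follows essentially the same route as the paper: Euler-formula counting in the triangulated disc $\inn[T]$ for (a), Lemma~\ref{d4} applied to interior vertices for (b), the triangular inner faces along edges of $T$ for (c), and the observation that a single common interior neighbour would force $|I|=1$ for (d). Your write-up simply makes explicit the counting and topological details that the paper's terser argument leaves implicit.
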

\begin{proof} If $|I| \le 2$, then $I$ contains a vertex $v$ with $d_G(v) \le 3$, contrary to Lemma~\ref{d4}.
	Thus $|I| \ge 3$ and (b) holds. 
	Moreover, 
 $I^+:=\inn [T]$ is triangulated and therefore
 $\|I^+\|=3|I^+|-6$ and $\|I\|\le 3|I|-6$.
 Thus 
$$\|V(T),V(I)\|=\|I^+\|-\|T\|-\|I\|\ge 3(3+|I|)-6-3-(3|I|-6)=6.$$  Thus (a)   holds. 
As $I^+$ is triangulated and $T$ is separating,  
  every edge of $T$ is contained in a triangle of $I^+$ other than $T$; so (c) holds.

If $\abs{(N(x)\cup N(y))\cap V(I)}\le 1$, then 
$\abs{I}=1$ because $G$ is a near plane triangulation. 
This contradicts (b). So (d) holds. 
\end{proof}

\begin{lemma}\label{cor-reducible}
Let $C$ be a separating cycle in $G$ such that $V(C)\cap A=\emptyset$.
Assume $X$, $Y$ are disjoint subsets of   $G$ such that 
$X \cup Y \ne \emptyset$,
$Y$ is collectable in $G-X$, and $G[ X \cup Y]$ is connected. 
Let $G_1=\inn[C]-(X\cup Y)$,   $G_2=\ext(C)-(X\cup Y)$,
$B_1=B(G_1)$, $B_2=B(G_2)$,  
$G_2'=\ext[C]-(X\cup Y)$, 
$A'=V(C)-(X\cup Y)$. 
If $A'$ is  usable in $G_1$ and   collectable in $G_2'$, then
$$  |Y| +  |B_1| + |B_2|   < 3|X|+|B|+ \tau(G_2) \le 3|X|+|B|+1.$$ 
In particular, 
\[
\abs{Y}< \begin{cases}
3\abs{X}+\abs{B}-\abs{B_1}-\abs{B_2}
&\text{if }(X\cup Y)\cap B\neq \emptyset,\\
3\abs{X}+\tau(G_2)-\abs{B_1}
&\text{otherwise.}
\end{cases}
\]
\end{lemma}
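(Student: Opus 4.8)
The statement is a "splitting along a separating cycle" lemma: we delete/collect $X\cup Y$, split the remaining graph along $C$, apply minimality to the two sides with well-chosen usable sets, and reassemble. The plan is to set $G^* = G-(X\cup Y)$ and write $G^* = G_1 \cup G_2'$ with $G_1 \cap G_2' = G^*[A']$ where $A' = V(C)-(X\cup Y)$. Since $V(C)\cap A = \emptyset$, the original $A$ sits entirely in one side; say $A \subseteq V(\ext[C])$, so $A$ survives into $G_2'$ (intersected with $V(G_2')$), and $A'$ is used as the auxiliary usable set on the $G_1$ side, which is legitimate precisely because of the hypothesis that $A'$ is usable in $G_1$. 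Apply the extremality of $(G;A)$ to $G_1$ with usable set $A'$ and to $G_2'$ with usable set $A \cup A'$ (which is usable there — $A'$ is an admissible path on the boundary of $G_2'$ and $A$ is an admissible path, and one checks they are compatible, or one argues more carefully about which admissible path to take); this gives $f(G_1;A') \ge \partial(G_1)$ and $f(G_2';A\cup A') \ge \partial(G_2')$. Then an $A'$-good set $X_1$ of $G_1$ and an $(A\cup A')$-good set $X_2$ of $G_2'$ can be merged into an $A$-good set of $G$: collect $Y$ last using that $Y$ is collectable in $G-X$, glue $X_1$ and $X_2$ along $A'$ exactly as in the proofs of Lemmas~\ref{cutvertex} and \ref{nochord} (the $A'$ vertices get placed at the very end in a common collection order, using that $A'$ is collectable in $G_2'$). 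This yields
\[
f(G;A) \;\ge\; |X_1| + |X_2| - |A'| + |Y| \;\ge\; \partial(G_1) + \partial(G_2') - |A'| + |Y|.
\]

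**The counting step.** It remains to compare $\partial(G_1) + \partial(G_2') - |A'| + |Y|$ with $\partial(G)$. Write $|A'| = |V(C)| - |(X\cup Y)\cap V(C)|$ and recall $|V(G)| = |V(G_1)| + |V(G_2')| - |A'| + |X\cup Y| - |(X\cup Y)\cap V(C)|$ (the vertices of $C$ removed by $X\cup Y$ are double-counted in $G_1$ and... actually here one must be careful: $C$ is entirely in $G_1$ or split — since $C$ is a cycle and $G_1 = \inn[C]-(X\cup Y)$, $G_2' = \ext[C]-(X\cup Y)$, the shared vertices are exactly $A' = V(C)-(X\cup Y)$). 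The boundary accounting is the crux: $B(G_1) = B_1$, $B(G_2') = B_2 \cup (\text{something})$ — and one needs $B \subseteq$ the boundary of $G_2'$ up to the part destroyed by $X\cup Y$, together with the fact that any exposed special cycle of $G_1$ or $G_2'$ is an exposed special cycle of $G$ (because $(X\cup Y)\cap V(C) $ or connectivity forces this), so $\tau(G_1), \tau(G_2')$ contribute controllably, and $\tau(G) = 0$ by Lemma~\ref{lem-nospecialcycle}. Plugging the definition of $\partial$ and simplifying, the inequality $f(G;A) \ge \partial(G)$ would contradict that $(G;A)$ is a counterexample unless
\[
|Y| + |B_1| + |B_2| \;\ge\; 3|X| + |B| + \tau(G_2),
\]
i.e. the negation of the claimed bound; and $\tau(G_2) \le 1$ since only the single inner face of $G_2 = \ext(C)-(X\cup Y)$ bounded by (a sub-curve of) $C$ can host a special cycle not already a special cycle of $G$, so $\tau(G_2)\le 1$ by Lemma~\ref{lem-nospecialcycle}, giving the "$\le 3|X|+|B|+1$" form. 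The "in particular" clauses are then immediate: when $(X\cup Y)\cap B \ne\emptyset$, any special cycle of $G_2$ is a special cycle of $G$ so $\tau(G_2)=0$, giving $|Y| < 3|X| + |B| - |B_1| - |B_2|$; otherwise drop $|B_2|\ge$ the relevant part of $|B|$ to get $|Y| < 3|X| + \tau(G_2) - |B_1|$.

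**Main obstacle.** The genuinely delicate point is not the arithmetic but verifying that the two auxiliary sets used for induction are actually \emph{usable} in the respective subgraphs, and that the glued ordering really witnesses $A$-goodness. For $G_1$ the set $A' = V(C)-(X\cup Y)$ must form an admissible path in $\mathbf B(G_1)$ — this is exactly what the hypothesis "$A'$ is usable in $G_1$" grants, but one must also know $A \cap V(G_1)$ behaves, which is vacuous here since $V(C)\cap A=\emptyset$ and $A$ lies outside. For $G_2'$ one needs $A \cup A'$ (restricted to $V(G_2')$) to be usable: this requires $A'$ to be collectable in $G_2'$ (a hypothesis) so that in the merging the $A'$-vertices can be shoved to the top of the order after $A$, and one must check no component issue arises. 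I also expect care is needed to justify $B \subseteq B(G_2') \cup (\text{deleted vertices})$ and that no special cycle of $G_1$ or $G_2'$ is "new" (type-c) — precisely the role of requiring $G[X\cup Y]$ connected and $(X\cup Y)\cap B\ne\emptyset$ in the sharper clause. Once these structural facts are nailed down, the displayed inequality falls out by substituting into $\partial$ and using $\partial(G) \le f(G;A)$ fails for a counterexample, exactly as in Lemmas~\ref{lem-a}, \ref{cor-red}, and \ref{cutvertex}.
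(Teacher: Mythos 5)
Your overall strategy (split along $C$, apply the minimality of the extreme counterexample to the two sides, reassemble, count) is the paper's strategy, but the specific decomposition you chose breaks the argument at two points. First, you invoke the induction hypothesis for $(G_2';A\cup A')$, which requires $A\cup A'$ to be \emph{usable} in $G_2'$. In general it is not: since $V(C)\cap A=\emptyset$, the sets $A$ and $A'$ are disjoint, and Definition~\ref{usable} demands that the designated set meet each component in the vertex set of a \emph{single} admissible path; two disjoint paths in one component (e.g.\ $A=\{a,a'\}\subseteq B$ and $A'=\{x,y\}\subseteq V(C)$, exactly the situation in the application inside Lemma~\ref{sep-triangle}) are not of this form, so $f(G_2';A\cup A')\ge\partial(G_2')$ cannot be extracted from extremality. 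The paper avoids this by inducting on the \emph{vertex-disjoint} pieces $G_1$ (with usable set $A'$) and $G_2=\ext(C)-(X\cup Y)$ (with usable set $A$), so that $V(G)=V(G_1)\cupdot V(G_2)\cupdot(X\cup Y)$; the hypothesis that $A'$ is collectable in $G_2'$ is used only to splice the $A'$-vertices into the ordering \emph{between} the good set of $G_2$ and the interior part of the good set of $G_1$, where the collection order in $G_2'$ bounds their back-degree by $3$. Your placement of $A'$ ``at the very end'' does not achieve this: at the top of the order an $A'$-vertex still has all of its neighbours in $\inn(C)$ below it, and collectability in $G_2'$ says nothing about those.

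Second, the counting for your overlapping decomposition is asserted rather than done, and it does not deliver the stated bound. Each vertex of $A'$ contributes $\tfrac34$ (plus $\tfrac14$ only if it lies on $B(G_2')$) to $\partial(G_2')$, whereas deleting $A'$ from $G_2'$ may expose many new vertices onto $B(G_2)=B_2$; consequently $\partial(G_2')-|A'|$ can be strictly smaller than $\partial(G_2)$, so the lower bound $f(G;A)\ge\partial(G_1)+\partial(G_2')-|A'|+|Y|$ is weaker than $f(G;A)\ge\partial(G_1)+\partial(G_2)+|Y|$ and need not force $|Y|+|B_1|+|B_2|<3|X|+|B|+\tau(G_2)$. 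With the disjoint decomposition the identity $\partial(G)=\partial(G_1)+\partial(G_2)+\tfrac34(|X|+|Y|)-\tfrac14(|B_1|+|B_2|-|B|-\tau(G_2))$ is exact, using $\tau(G)=\tau(G_1)=0$ (any special cycle of $G_1$ is one of $G$, and Lemma~\ref{lem-nospecialcycle}) and $\tau(G_2)\le1$ (Lemma~\ref{lem-delta}: only type-c cycles can survive, and at most one), and the inequality follows at once. Your handling of the two ``in particular'' cases ($\tau(G_2)=0$ when $(X\cup Y)\cap B\ne\emptyset$, and $B_2=B$ otherwise) is correct and matches the paper.
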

\begin{proof}
    Since $A'$ is usable, $(X\cup Y)\cap V(C)\neq\emptyset$ and 
    so $X \cup Y$ lies  in the infinite  face of $G_1$. 
    Thus any special cycle of $G_1$ is also a special cycle of $G$. Thus by Lemma~\ref{lem-nospecialcycle}, $\tau(G)= \tau(G_1)=0$. 
	By Lemma~\ref{lem-delta},
	in an optimal special cycle packing of $G_2$, 
	at most one cycle is type-c 
	and there are no type-a or type-b cycles.
	Therefore
	$\tau(G_2)\le 1$.

 As  $A'$ is collectable in $G'_2$,
we have  $$f(G;A) \ge f(G_1;A') + f(G_2;A) + |Y|.$$ 
On the other hand,
 $$\partial(G)=\partial(G_1)+\partial(G_2) + \frac34 (|X|+|Y|) - \frac14  (|B_1| + |B_2|-|B| - \tau(G_2)) .$$

 As  $f(G_1;A') \ge \partial(G_1)$ and $f(G_2;A) \ge \partial(G_2)$, we have   
 $$\partial(G) - \frac34 (|X|+|Y|) +  \frac14  (|B_1| + |B_2|-|B| - \tau(G_2)) \le 
  f(G_1;A')+f(G_2;A)  \le f(G;A) - |Y|.$$ 
  As $f(G;A) < \partial(G)$, it follows   that  
\[|Y| +  |B_1| + |B_2|   < 3|X|+|B|+ \tau(G_2) \le 3|X|+|B|+1.
   \]

Note that if $(X \cup Y)\cap B\neq\emptyset$, then $\tau(G_2)=0$. In this case, we have  
  $$  |Y| +  |B_1| + |B_2|   < 3|X|+|B|.$$
  If $(X \cup Y)\cap B=\emptyset$, then $B_2=B$. In this case, we have
  $ |Y| +  |B_1|     < 3|X| + \tau(G_2)$.
\end{proof}

\begin{lemma}
	\label{sep-triangle}
	Let $C$ be a separating triangle of $G$. 
	If $C$ has no vertex in $B(G)$, then  either 
	$\|v,V(\ext(C))\| \ge 3$ for all vertices $v \in V(C)$ or  
	$\|v,V(\ext(C))\| \ge 4$ for two vertices $v\in V(C)$.
\end{lemma}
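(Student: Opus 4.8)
The plan is to prove the lemma for \emph{all} separating triangles $C$ with $V(C)\cap B=\emptyset$ at once, by induction on $|\inn(C)|$, with the reducibility Lemma~\ref{cor-reducible} doing the main work. Fix such a $C$ and suppose its conclusion fails; after relabelling $V(C)=\{u_1,u_2,u_3\}$ this means $\|u_1,J\|\le 2$ and $\|u_2,J\|\le 3$, where $J:=V(\ext(C))$. Since $V(C)\subseteq V-B\subseteq V-A$, each $u_i$ has $d_G(u_i)\ge 4$ by Lemma~\ref{d4} and $V(C)\cap A=\emptyset$, so Lemma~\ref{triangle} applies with $I:=\inn(C)$: $|I|\ge 3$, $\|V(C),V(I)\|\ge 6$, and $\|u_i,V(I)\|\ge 1$ for each $i$. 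For the base case $|I|=3$: then $\inn_G[C]$ is a $6$-vertex near-triangulation with outer triangle $C$ whose three interior vertices lie in $V-A$ and have no neighbour outside $C$, hence have degree $\ge 4$ in $\inn_G[C]$; checking the two $6$-vertex plane triangulations, this forces $\inn_G[C]$ to be the octahedron, so $C$ is a special cycle (of type $Q_1$), contradicting Lemma~\ref{lem-nospecialcycle}. So we may assume $|I|\ge 4$.

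For the inductive step I first apply Lemma~\ref{cor-reducible} to $C$ with $X=\{u_3\}$, $Y=\emptyset$, and $A'=V(C)-\{u_3\}=\{u_1,u_2\}$. Here $Y$ is trivially collectable in $G-X$, $G[X]$ is connected, and $(X\cup Y)\cap B=\emptyset$; the set $\{u_1,u_2\}$ is usable in $G_1=\inn_G[C]-u_3$ because $u_1u_2$ is an edge of the boundary cycle of the triangulated disc $\inn_G[C]-u_3$, and it is collectable in $G_2'=\ext_G[C]-u_3$ via the order $u_1,u_2$ since $d_{G_2'}(u_1)=1+\|u_1,J\|\le 3$ and $d_{G_2'-u_1}(u_2)=\|u_2,J\|\le 3$ — this is precisely where the hypotheses $\|u_1,J\|\le 2$ and $\|u_2,J\|\le 3$ enter. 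The boundary of $\inn_G[C]-u_3$ is the cycle $u_1q_1\cdots q_tu_2u_1$, where $q_1,\dots,q_t$ are the neighbours of $u_3$ in $V(I)$ and $t=\|u_3,V(I)\|\ge 1$, so $|B_1|=t+2$. Lemma~\ref{cor-reducible} then yields $0=|Y|<3|X|+\tau(G_2)-|B_1|=3+\tau(G_2)-(t+2)$, that is $t<1+\tau(G_2)\le 2$, so $t=1$.

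Now let $p$ be the unique neighbour of $u_3$ in $V(I)$. The two inner faces of $\inn_G[C]$ at $u_3$ are $u_3u_1p$ and $u_3pu_2$, so $p$ is adjacent to $u_1,u_2,u_3$, and $C':=u_1u_2p$ is a separating triangle with $V(C')\cap B=\emptyset$, $V(\ext(C'))=\{u_3\}\cup J$, and $\inn(C')=I-p$, hence $|\inn(C')|=|I|-1<|I|$. Apply the inductive hypothesis to $C'$. Since $p$ has no neighbour in $J$, $\|p,V(\ext(C'))\|=|N(p)\cap\{u_3\}|=1<3$, so the first alternative of the lemma fails for $C'$; therefore two of $u_1,u_2,p$ have $\|\cdot,V(\ext(C'))\|\ge 4$, and they must be $u_1$ and $u_2$. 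But $\|u_i,V(\ext(C'))\|=|N(u_i)\cap\{u_3\}|+\|u_i,J\|=1+\|u_i,J\|$, so $\|u_1,J\|\ge 3$, contradicting $\|u_1,J\|\le 2$. This finishes the induction.

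The step I expect to require the most care is the application of Lemma~\ref{cor-reducible}: one must check that $\inn_G[C]-u_3$ really is a triangulated disc whose boundary is the simple $(t+2)$-cycle $u_1q_1\cdots q_tu_2u_1$, so that $|B_1|=t+2$ exactly; this rests on $\inn_G[C]$ being $2$-connected and on the deletion of the boundary vertex $u_3$ not creating a cut vertex, a point that has to be read off from the structure of an extreme counterexample (e.g.\ that no small separating cycle produces a cut vertex here). Given that, the octahedron recognition in the base case and the passage to $C'$ are routine.
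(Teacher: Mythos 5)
Your proof is correct and takes essentially the same route as the paper: assume $\|u_1,V(\ext(C))\|\le 2$ and $\|u_2,V(\ext(C))\|\le 3$, apply Lemma~\ref{cor-reducible} with $X$ the third vertex and $Y=\emptyset$ (using $|B_1|\ge \|u_3,V(\inn(C))\|+2$) to force $u_3$ to have a unique interior neighbour $p$, and then descend to the smaller separating triangle $u_1u_2p$. The only differences are organizational: the paper phrases the descent as a minimal-area counterexample (so it needs no base case, since Lemma~\ref{triangle}(b) plus the descent already yields the contradiction), whereas you run an induction on $|\inn(C)|$ and supply a base case at $|\inn(C)|=3$ via the octahedron/special-cycle observation, which is valid but not needed.
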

\begin{proof}
Suppose not. Let $C=xyzx$ be a counterexample with the minimal area. We may assume that 
$\|x, V(\ext(C))\| \le 2$ and  $\|y,V(\ext(C))\| \le 3$.
  By Lemma~\ref{triangle}(c), $z$ has 
  a neighbour $w$ in $I:=\inn(C)$.  
  If $w$ is the only neighbour of $z$ in $I$, then by Lemma~\ref{triangle}(b), 
  $C':=xwyx$ is a separating triangle. However, $w$ has only $1$ neighbour in $\ext(C')$
  and $x$ has at most $3$ neighbours in $\ext(C')$, contradicting the choice of $C$.
  
  Thus $\|z,V(I)\|\ge 2$.

  We apply Lemma~\ref{cor-reducible} with $C$, $X=\{z\}$ and $Y=\emptyset$. 
   Then $A':=\{x,y\}$ is usable in $G_1:=\inn[C]-z$, $A'$ is collectable in $G_2':=\ext[C]-z$ and $B_1:=B(G_1)\supseteq \{x,y\}\cupdot N_I(z)$.  So $|B_1|\ge4$, and this contradicts Lemma~\ref{cor-reducible}.
   \end{proof}
   
   \begin{lemma}\label{C4septri}
   Let $C$ be a separating induced cycle of length $4$ in $G$
   having no vertex in $B(G)$.
   Then exactly one of the following holds. 
   \begin{enum}
    \item $\abs{B(\inn(C))}\ge 4$.
    \item $\abs{V(\inn(C))}\le 2$ and every vertex in $\inn(C)$ has degree $4$ in $G$.
   \end{enum}
   \end{lemma}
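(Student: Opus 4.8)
The plan is to prove the dichotomy by establishing the following: if $|B(\inn(C))|\le 3$, then $|V(\inn(C))|\le 2$ and every vertex of $\inn(C)$ has degree $4$ in $G$. This also yields mutual exclusivity, since $|B(\inn(C))|\le|V(\inn(C))|$, so (b) forces $|B(\inn(C))|\le 2<4$. Write $I=\inn(C)$. Since $C$ is separating, $V(I)\ne\emptyset$; since $V(C)\cap B(G)=\emptyset$ and the usable set $A$ lies in $B(G)$, we get $V(C)\cap A=\emptyset$, so Lemma~\ref{d4} gives $d_G(v)\ge 4$ for all $v\in V(C)\cup V(I)$. As $G$ is a near plane triangulation (Lemma~\ref{lem-neartriangulation}) and $C$ is an induced $4$-cycle, $\inn[C]$ is a near-triangulation with outer boundary $C$, and every vertex of $I$ has all its $G$-neighbours in $\inn[C]$.

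First I would settle the case $|V(I)|\le 2$ by a short Euler-formula edge count inside the near-triangulation $\inn[C]$: if $|V(I)|=1$ the lone interior vertex is joined to all four vertices of $C$, and if $|V(I)|=2$ the two interior vertices are adjacent and the sum of their (entirely interior) degrees equals $8$; combined with $\delta\ge 4$ this forces each to have degree exactly $4$, so (b) holds.

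Next, assuming $|B(I)|\le 3$ and $|V(I)|\ge 3$, I would show $I$ is connected: each edge $v_iv_{i+1}$ of $C$ bounds, on its interior side, a triangular face $v_iv_{i+1}w$ of $\inn[C]$ with $w\in V(I)$ (as $C$ is induced), the interior neighbours of each $v_i$ form a path in $I$, and walking around $C$ links all these witnesses into one component, which (by near-triangularity) is all of $I$. Hence $I$ is a connected near-triangulation on at least three vertices, so $|B(I)|=3$ and $B(I)$ is either a triangle $T$ or a path $p_1p_2p_3$. If $B(I)=T$ with $T$ separating (i.e.\ $V(I)\supsetneq V(T)$), then $T$ has no vertex on $B(G)$; the triangulated, vertex-free region between $C$ and $T$ has $4+3=7$ spoke edges, so $\sum_{v\in V(T)}\|v,V(C)\|=7$, and since a vertex of $T$ has no neighbour in $\ext_G(C)$ this equals $\sum_{v\in V(T)}\|v,V(\ext_G(T))\|$; but Lemma~\ref{sep-triangle} forces that last sum to be at least $8$, a contradiction. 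If $B(I)=T=I$, the region between $C$ and $T$ is a triangulated annulus with $7$ spokes; the condition $\delta\ge 4$ eliminates the spoke distribution $(1,1,2,3)$ in all cyclic placements (each gives a vertex of $T$ of degree $3$), leaving $(1,2,2,2)$, which determines $\inn[C]$ up to isomorphism; comparison with Figure~\ref{octahedron} identifies it with $Q_2^+$ with boundary cycle $C$, so $C$ is a special cycle of $G$, contradicting Lemma~\ref{lem-nospecialcycle}.

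The remaining case, $B(I)=I=p_1p_2p_3$, is where the main difficulty lies. Here $\delta\ge 4$ and planarity again pin down $\inn[C]$ up to relabelling: $p_1p_2,p_2p_3\in E$, $N(p_1)\cap V(C)=\{v_1,v_2,v_3\}$, $N(p_3)\cap V(C)=\{v_3,v_4,v_1\}$, $N(p_2)\cap V(C)=\{v_1,v_3\}$, so each of $v_1,v_3$ is adjacent to all of $p_1,p_2,p_3$. Deleting $v_1$ makes $\{p_1,p_2,p_3\}$ collectable, after which $v_2$ and $v_4$ have degrees $1+\|v_2,V(\ext(C))\|$ and $1+\|v_4,V(\ext(C))\|$; so if either is at most $2$ we may collect one further vertex to obtain a set $Y$ of $4=4\cdot|\{v_1\}|$ interior vertices with $G[\{v_1\}\cup Y]$ connected, contradicting Lemma~\ref{lem-a}. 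Likewise, Lemma~\ref{cor-reducible} applied to $C$ with $X=\{v_1,v_3\}$, $Y=\{p_1,p_2,p_3\}$ (so $A'=\{v_2,v_4\}$ is two isolated vertices, hence usable, in $\inn[C]-X-Y$, and collectable in $\ext[C]-\{v_1,v_3\}$ once $\|v_2,V(\ext(C))\|,\|v_4,V(\ext(C))\|\le 3$) gives a contradiction, and symmetric reductions (deleting $v_2$ or $v_3$ instead, and combining collections) push every vertex of $C$ to be heavy toward $\ext(C)$. The hard part is precisely this final sub-case, in which all four vertices of $C$ have many neighbours in $\ext(C)$: every reduction confined to $\inn[C]$ falls short of the threshold of Lemma~\ref{lem-a} by exactly one vertex, so closing it requires reaching into the exterior — e.g.\ locating a forbidden separating triangle there, or applying Lemma~\ref{cor-reducible} with $X\cup Y$ straddling $C$ — using crucially that no vertex of $C$ lies on $B(G)$.
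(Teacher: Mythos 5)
Your reduction to the case $|B(\inn(C))|\le 3$, your treatment of $|V(\inn(C))|\le 2$, and your two triangle sub-cases are sound; in particular, identifying $\inn[C]$ with $Q_2^+$ and invoking Lemma~\ref{lem-nospecialcycle} when $\inn(C)$ is a facial triangle is slicker than the paper, which reaches the same configuration and then runs a delete-and-collect argument. But the proof is not complete: you explicitly leave open the sub-case where $\inn(C)$ is the path $p_1p_2p_3$ and every vertex of $C$ is heavy towards $\ext(C)$, and the tools you propose for it are miscalibrated. Lemma~\ref{lem-a} demands $4|X|$ collected interior vertices, and after deleting one vertex of $C$ only the three $p_i$ are guaranteed collectable, so you are always one short unless some $v_i$ happens to have at most two neighbours in $\ext(C)$. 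Worse, your application of Lemma~\ref{cor-reducible} with $X=\{v_1,v_3\}$, $Y=\{p_1,p_2,p_3\}$ does not yield a contradiction even under its stated hypotheses: here $X\cup Y$ is disjoint from $B$ and $B_1=\{v_2,v_4\}$, so the relevant conclusion of that lemma reads $|Y|<3|X|+\tau(G_2)-|B_1|$, i.e.\ $3<6+\tau(G_2)-2$, which is simply true.

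The missing idea is that the full strength of Lemma~\ref{lem-a} is not needed here: since the deleted vertex $w\in V(C)$ is an interior vertex of $G$, the boundary does not shrink, and Lemma~\ref{lem-taug'} applied to $\{w\}\cup V(\inn(C))$ gives $\partial(G)\le\partial(G')+\frac34\cdot 4+\frac{\delta}{4}$ where $G'=G-w-V(\inn(C))$; as collecting the three $p_i$ gives $f(G;A)\ge f(G';A)+3$, deleting one and collecting three already breaks even provided $G'$ has no type-c exposed special cycle. If such a cycle does arise, the new face of $G'$ has length at most $5$, forcing $\|w,V(\ext(C))\|\le 2$; then deleting a different vertex $w'$ of $C$ adjacent to some $p_i$, one can collect $p_1,p_2,p_3$ and afterwards $w$ itself (its remaining degree is at most $3$), which are four interior vertices and does trigger Lemma~\ref{lem-a}. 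This is exactly how the paper closes the case (it organizes the whole $|B(\inn(C))|\le 3$ analysis through the single Euler count \eqref{eq:euler}, splitting on whether $\inn(C)$ contains a cycle, but the content in the tree case is the above).
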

   \begin{proof}
   Suppose that $|B(\inn(C))|\le 3$. 
   By Euler's formula, we have \[\|\inn [C]\|= 3\abs{V(\inn[C])}-7
   =3\abs{V(\inn(C))}+5\]
   as $G$ is a near plane triangulation. 
   Then since $C$ is induced, by Lemma~\ref{d4},
   \begin{equation}\label{eq:euler}
   \begin{split}
       0&\le \sum_{v\in V(\inn(C))} (d(v)-4)\\
	   &=\|\inn[C]\| - \|C\| + \| \inn(C)\|- 4|V(\inn(C))|\\
       &=(3|V(\inn(C))|+5)-4 + \| \inn(C) \|  - 4|V(\inn(C))|\\
       &=1-|V(\inn(C))|+\| \inn(C)\|  .
       \end{split}
   \end{equation}

   Suppose that  
   $\inn(C)$ has a cycle. Since $\abs{B(\inn(C))}\le 3$, 
   we deduce that $B(\inn(C)) = xyzx$ is a triangle. 
   By Euler's formula applied on $G[V(C)\cup B(\inn(C))]$, we have 
   \[\|V(C),B(\inn(C))\|= (3\cdot 7-7)-3-4=7,\] 
   hence $\mathbf B(\inn(C))$ is a facial triangle by Lemma~\ref{sep-triangle}. Therefore, $x,y,z$ have degree $4$, $4$, $5$ in $G$ by \eqref{eq:euler} and Lemma~\ref{d4}. 
   Let $w, w'\in V(C)$ be consecutive neighbours of $x$ in $V(C)$. From $G$, we can delete $w$ and collect $x,y,z$. Let $G'=G-\{w,x,y,z\}$. 
   If $G'$ has an exposed special cycle, then 
   the face of $G'$ containing $w$ has length at most $5$, implying that 
   $\| w, V(\ext(C))\| \le 2$ because $C-w$ is a subpath of an exposed special cycle of $G'$, as $C$ is induced. Then 
   we can delete $w'$ and collect $x,y,z,w$, contradicting Lemma~\ref{lem-a}.
   Therefore $G'$ has no exposed special cycles.
   Then $\partial(G)=\partial(G')+3$
   and $f(G;A) \ge f(G';A)+3 \ge \partial(G')+3 =\partial(G)$, a contradiction.

   Therefore $\inn(C)$ has no cycles.  
   Then 
    $\| \inn(C)\|\le \abs{V(\inn(C))}-1$,
    and so in \eqref{eq:euler} the equality 
    must hold. This means
    $\inn(C)$ is a tree and every vertex in $\inn(C)$ has degree $4$ in $G$ 
    by Lemma~\ref{d4}.
    If $\inn(C)$ has at least $3$ vertices,
    then let $w$ be a   vertex in $V(C)$ adjacent to some vertex in $\inn(C)$. 
    By deleting $w$, we can collect all the vertices in $\inn(C)$. Similarly we can choose 
    $w$ so that $G'=G-w-V(\inn(C))$ contains no special cycle, and that leads to the same contradiction.
      Thus we deduce (b).
   \end{proof}

\section{Degrees of boundary vertices} 	\label{sec:boundary}

\begin{lemma}
\label{deg<6}
Each vertex in $B$ has degree at most $5$.
\end{lemma}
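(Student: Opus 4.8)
The plan is to rule out the existence of a boundary vertex of degree at least $6$ by a single application of Lemma~\ref{cor-red} with a singleton set $X$. So suppose, for contradiction, that some $v\in B$ has $d:=d_G(v)\ge 6$. Since $G$ is $2$-connected by Lemma~\ref{cutvertex}, the boundary $\mathbf B$ is a cycle, so $v$ lies on the cycle $\mathbf B$; let $v_1$ and $v_d$ be its two neighbours along $\mathbf B$, and list the remaining neighbours of $v$ as $v_2,\dots,v_{d-1}$ in the rotation around $v$, so that the clockwise cyclic order of the neighbours of $v$ is $v_1,v_2,\dots,v_d$ and consecutive ones span a face at $v$. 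By Lemma~\ref{nochord}, $\mathbf B$ has no chord, so none of $v_2,\dots,v_{d-1}$ lies on $\mathbf B$ (such a vertex $v_i$ would make $vv_i$ a chord); thus $v_2,\dots,v_{d-1}$ are $d-2\ge 4$ interior vertices of $G$.

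The next step is to compute $|B(G-v)|$ exactly. As $G$ is a near plane triangulation by Lemma~\ref{lem-neartriangulation}, the faces incident with $v$ other than the outer face are precisely the triangles $vv_iv_{i+1}$ for $1\le i\le d-1$. Deleting $v$ merges these $d-1$ triangles together with the outer face into a single new outer face, whose boundary is the old boundary cycle with $v$ replaced by the path $v_1v_2\cdots v_d$. Consequently $B(G-v)=(B-\{v\})\cupdot\{v_2,\dots,v_{d-1}\}$, so $|B(G-v)|=|B|-1+(d-2)=|B|+(d-3)$; here $G-v$ is connected because $G$ is $2$-connected. I expect this bookkeeping — checking that exactly the $d-2$ vertices $v_2,\dots,v_{d-1}$ become newly exposed and nothing else changes on the boundary — to be the only step that needs care, and it is exactly where the near-triangulation and chordless-boundary properties are used.

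Finally I would apply Lemma~\ref{cor-red} with $X=\{v\}$, $Y=\emptyset$, and $s=d-3$. Its hypotheses hold: $\emptyset$ is trivially collectable in $G-v$, $|B(G-(X\cup Y))|=|B|+s$, $G[X\cup Y]=G[\{v\}]$ is connected, and $(X\cup Y)\cap B=\{v\}\ne\emptyset$. The conclusion $s+|Y|<3|X|$ then reads $d-3<3$, i.e.\ $d\le 5$, contradicting $d\ge 6$. This argument is insensitive to whether $v\in A$: if $v\in A$, then $A-\{v\}$ remains usable in $G-v$ by Lemma~\ref{lem:reduction}, so $(G-v;A-\{v\})$ is a strictly smaller instance on which extremality of $(G;A)$ forces $f(G-v;A-\{v\})\ge\partial(G-v)$, which is exactly what is used inside Lemma~\ref{cor-red}.
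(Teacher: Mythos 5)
Your proposal is correct and is essentially the paper's own argument: delete the boundary vertex of degree at least $6$, note (using $2$-connectedness, the chordless boundary, and near-triangulation) that its at least $d-2\ge 4$ interior neighbours become exposed, and apply Lemma~\ref{cor-red} with $X=\{v\}$, $Y=\emptyset$ (the paper simply takes $s=3$ rather than $s=d-3$). Your extra bookkeeping of $B(G-v)$ and the remark about $v\in A$ are fine but not needed beyond the inequality $|B(G-v)|\ge |B|+3$.
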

\begin{proof}
Assume to the contrary that $x \in B$ has $d(x) \ge 6$. 
Then deleting $x$ exposes at least $4$ interior vertices.
Apply Lemma~\ref{cor-red} with $X=\{x\}, Y=\emptyset$ and $s=3$, we obtain a contradiction.    
\end{proof}

Recall that $A=\{a,a'\}$.

\begin{lemma}
\label{d=5}
Each vertex in $B-A$ has degree $5$.
\end{lemma}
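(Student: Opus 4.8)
We already know from Lemma~\ref{d4} and Lemma~\ref{deg<6} that every vertex $x\in B-A$ satisfies $4\le d(x)\le 5$, so it suffices to rule out $d(x)=4$. The plan is to take a hypothetical boundary vertex $x\in B-A$ with $d(x)=4$ and derive a contradiction by deleting $x$ (together possibly with one more vertex) and collecting a small set of exposed interior vertices, then invoking one of our reduction lemmas (Lemma~\ref{cor-red}, Lemma~\ref{lem-a}, or Lemma~\ref{cor-reducible}).

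First I would set up notation. Since $G$ is $2$-connected (Lemma~\ref{cutvertex}) and a near triangulation (Lemma~\ref{lem-neartriangulation}) with no boundary chord (Lemma~\ref{nochord}), the vertex $x$ has exactly two boundary neighbours, say $p,q\in B$, and the remaining neighbours of $x$ are interior, arranged in order around $x$ as $p,w_1,w_2,q$ (so there are exactly two interior neighbours $w_1,w_2$, each of which forms a triangular face with $x$ and its cyclic neighbours; in particular $pw_1, w_1w_2, w_2q\in E(G)$). Deleting $x$ merges the faces at $x$ into one face whose boundary contains $p,w_1,w_2,q$; since $w_1,w_2$ are interior vertices of $G$, deleting $x$ exposes them. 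Now the key observation is that $w_1$ (and symmetrically $w_2$) has degree at least $4$ in $G$ by Lemma~\ref{d4}, and once $x$ is removed, $w_1$ loses one neighbour; I would try to argue that after deleting $x$ we can collect $\{w_1,w_2\}$, or that collecting them creates a contradiction via $|B(G-\{x,w_1,w_2\})|\ge |B(G)|$, which would contradict Lemma~\ref{cor-red} with $X=\{x\}$, $Y=\{w_1,w_2\}$, $s=0$ since then $s+|Y|=2$ is not less than $3|X|=3$ — so that alone is not enough; we need to expose a third interior vertex or show $w_1,w_2$ are not collectable, forcing structural constraints (degrees of $w_1,w_2$ are large, or $w_1w_2$ lies on a separating triangle $xw_1w_2$, etc.).

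The main case analysis will hinge on the separating triangles through $x$: the triangle $xw_1w_2$ (if it is separating) and the triangles $xpw_1$, $xw_2q$ need not be separating since $p,q\in B$. If $xw_1w_2$ is separating, Lemma~\ref{triangle} gives $\|V(xw_1w_2),V(\inn(xw_1w_2))\|\ge 6$ and $|\inn(xw_1w_2)|\ge 3$, and since $d(x)=4$ we have $\|x,\inn(xw_1w_2)\|=0$, forcing $\|w_1,\inn\|+\|w_2,\inn\|\ge 6$ (actually $\ge 3$ each by Lemma~\ref{triangle}(c) applied appropriately — careful, (c) needs all three vertices of the triangle to have a neighbour inside, but $x$ has none, so $xw_1w_2$ cannot be separating at all). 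That is the clean way in: $xw_1w_2$ is \emph{not} separating, so the face on the other side of $w_1w_2$ from $x$ is a triangle $w_1w_2r$ for some vertex $r$. Then deleting $x$ exposes $w_1,w_2$, and after deleting $x$ the vertices $w_1,w_2$ have back-degree controlled; I would attempt to collect $w_1$ then $w_2$ (or collect one of them and delete the other) in $G-x$, and apply Lemma~\ref{cor-reducible} or Lemma~\ref{lem-a} after possibly also removing $p$ or $q$. The precise reduction: either we get an exposed interior vertex beyond $w_1,w_2$ contradicting Lemma~\ref{cor-red}, or the degrees of $w_1,w_2$ are pinned to $4$ and their common neighbour structure collapses to one of the special configurations, contradicting Lemma~\ref{lem-nospecialcycle}.

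The hard part, I expect, is the bookkeeping when neither $w_1$ nor $w_2$ is immediately collectable in $G-x$ — that is, when both have back-degree $\ge 4$ after deleting $x$, equivalently $d_G(w_i)\ge 5$ and $w_i$ has $\ge 4$ neighbours other than $x$ that survive. In that regime I would switch to a more global count: use Lemma~\ref{cor-reducible} with the separating cycle being a short cycle through $p,q$ bounding a region containing $x,w_1,w_2$, choosing $X$ and $Y$ so that $A'=V(C)-(X\cup Y)$ stays usable inside and collectable outside; the inequality $|Y|+|B_1|+|B_2|<3|X|+|B|$ should fail unless $w_1,w_2$ have exactly degree $5$, which then combines with a discharging-flavoured count on $B$ to finish. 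I anticipate spending most of the effort ensuring the cycle $C$ chosen for Lemma~\ref{cor-reducible} is genuinely separating, induced where needed, and that $A'$ meets the usability/collectability hypotheses — these are exactly the conditions that tend to be delicate in arguments of this kind.
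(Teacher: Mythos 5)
There is a genuine gap: your proposal is a plan rather than a proof, and the reduction you actually propose does not close. Your setup is fine ($x$ has two boundary neighbours and two interior neighbours $w_1,w_2$ in rotation, $xw_1w_2$ is facial and cannot be separating since $\|x,V(\inn(xw_1w_2))\|=0$ would violate Lemma~\ref{triangle}(c)), but your central move --- delete $x$, collect $\{w_1,w_2\}$ --- gives $s+|Y|=s+2$ against $3|X|=3$ in Lemma~\ref{cor-red}, so you need the boundary to strictly grow, and as you yourself concede you have no argument that it does. The paper's decisive move in the main case is the reverse of yours: when $x$ has a boundary neighbour $y\in B-A$, one takes the common \emph{interior} neighbour $z$ of $x$ and $y$ (after first showing $|N(x)\cap N(y)|=1$ via a separating-triangle argument), deletes $z$, and collects the two \emph{boundary} vertices $x,y$; if $d(y)=5$ this exposes three interior vertices and hits the threshold $s+|Y|=1+2=3=3|X|$ exactly, giving the contradiction. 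Nothing in your proposal reaches this threshold.

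Moreover, the two situations where that reduction fails are not handled by "pinning degrees and collapsing to a special configuration" in the loose sense you describe; they each require a complete structural determination of $G$. If every boundary neighbour $y\in B-A$ of $x$ also has $d(y)=4$, the paper propagates the degree-4 condition around $\mathbf B$, proves the interior consists of exactly three vertices, and identifies $G$ as $Q_2^+$ or $Q_4^{++}$, contradicting Lemma~\ref{lem-nospecialcycle}. If instead $N(x)\cap B\subseteq A$ (so $\mathbf B=xaa'x$ is a triangle and neither boundary neighbour is collectable --- a case your proposal never isolates, even though the usability of $A$ makes it behave entirely differently), the paper bounds $|N^\circ(y)|,|N^\circ(z)|\le 2$ for the two interior neighbours, rules out their adjacency to $a,a'$ via Lemma~\ref{triangle}(d), forces a unique common neighbour $w$ with $d(y)=d(z)=d(w)=4$, and again finds that $\mathbf B$ is a special cycle. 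Your final paragraph ("the inequality should fail unless $w_1,w_2$ have exactly degree 5, which then combines with a discharging-flavoured count") is speculation where the actual proof does its hardest work, so the argument as written cannot be completed along the lines you sketch.
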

\begin{proof}
Suppose that there is a vertex $x \in B-A$  with $d(x)<5$. By Lemma~\ref{d4}, $d(x)=4$. 
By Lemma~\ref{nochord}, exactly two of the neighbors of $x$ are in $B$.
Consider two cases.
		
\medskip
\noindent
{\it Case 1:}  $x$ has a neighbour $y \in B-A$. As    $|A|= 2$, we have $|B|\ge 4$.
As $G$ is a near plane triangulation,  there is a vertex $z\in N(x)\cap N(y)$ such that $xyzx$ is a facial triangle. As $\mathbf B$ has no chords by  Lemma~\ref{nochord},  $(N(x)\cap N(y))\cap B(G)=\emptyset$.

Suppose there is $z'\in N(x)\cap N(y)-\{z\}$.
Since $d(x)=4$ and $G$ is a near plane triangulation, $xzz'x$ is a facial triangle. Since $d(z)\ge 4$ by Lemma~\ref{d4}, $T:=yzz'y$ is a separating triangle.
As $d(y)\le 5$ by Lemma~\ref{deg<6}, $y$ has a unique neighbour $y'\in V(\inn (T))$ and therefore both $yy'zy$ and $yy'z'y$ are facial triangles.
By Lemma~\ref{triangle}(b), $\inn(T)$ contains at least three vertices and so $T':=zz'y'z$ is a separating triangle with $\|z,V(\ext(T'))\| = 2$ and $\|y',V(\ext(T'))\| = 1$, contrary to Lemma~\ref{sep-triangle}. 
So $N(x)\cap N(y)=\{z\}$.

If $d(y)=5$, then deleting $z$ and collecting $x$ and $y$ exposes  three vertices in $(N^{\circ}(x)\cup N^{\circ}(y))-\{z\}$, the resulting graph $G'=G-\{x,y,z\}$ has   $|B(G')|\ge|B|+1$. 
Apply Lemma~\ref{cor-red} with $X=\{z\}, Y=\{x,y\}$, and $s=1$, we obtain a contradiction.  

Hence $d(y)=4$. By repeating the same argument, we deduce that for all edges $vv'\in \mathbf B-A$, we have
\begin{enum*}
\item $d(v)=4=d(v')$ and
\item $|N(v)\cap N(v')|=1$.
\end{enum*}

Let $x'$, $y'$ be vertices such that  $N^{\circ}(x)=\{x',z\}$ and $N^{\circ}(y)=\{y',z\}$.  
As $G$ is a near plane triangulation and $\mathbf B$ is chordless, $G-B$ is connected. 
Let $J=\{x',z,y'\}$.
If $V-B\neq J$, then there exist $b\in J$ and $t\in (V-B)-J$ such that $b$ and $t$ are adjacent. 
Then 
deleting $b$ and collecting $x$, $y$ exposes 
all vertices in $(J-\{b\})\cup \{t\}$. 
Let $G'=G-\{x,y,b\}$. Then 
  $|B(G')|\ge|B|+1$. 
  With $X=\{b\}$, $Y=\{x,y\}$, and $s=1$, this contradicts Lemma~\ref{cor-red}.
   Hence $V-B=J$.

 \begin{figure}
	\centering
	\begin{tikzpicture}[yscale=.7]
	\tikzstyle{every node}=[circle,draw,fill=black!50,inner sep=0pt,minimum width=4pt]
	\node [label=$z$] at (0,.75)(z) {};
	\node [label=below:$x'$] at (-.6,.1) (x') {};
	\node [label=$x$] at (-1.3,1.2) (x) {};
	\node [label=left:$u$] at (-2,0) (u) {};
	\node [label=right:$v$] at (2,0) (v) {};
	\node [label=$y$] at (1.3,1.2) (y) {};
	\node [label=below:$y'$] at (.6,.1) (y') {};
	\draw (u)--(x');
	\draw (y')--(v);
	\draw (x)--(x');
	\draw (x)--(z)--(x');
	\draw (y)--(z)--(y');
	\draw (y)--(y');
	\scoped [on background layer] \draw (u) to [out=90,in=180] (0,1.5) to [out=0,in=90] (v);
	\scoped [on background layer] \fill [black,fill opacity=.2] (x'.center)--(u.center)--(u.south) to [out=-90,in=180] (0,-1.5) to [out=0,in=-90](v.south)-- (v.center)--(y'.center)--(z.center)--cycle;
	\draw [dashed] (u) to [out=-90,in=180] (0,-1.5) to [out=0,in=-90] (v);
	\end{tikzpicture}
	\caption{Case 1 in the proof of Lemma~\ref{d=5}. The dashed line may have other vertices and the gray region has other edges but no interior vertices.}
	\label{fig:d=5c1} %
\end{figure}
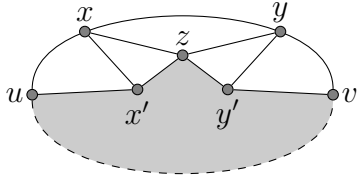

Let $u$, $v$ be vertices in $B$ so that $uxyv$ is a path in $\mathbf B$. Since $G$ is a near plane triangulation, $x'$  is adjacent to $u$ and $z$, and $y'$ is adjacent to $v$ and $z$, see Figure~\ref{fig:d=5c1}. Then $ux'zy,xzy'v$ are paths in $G$.
If $A=\{u,v\}$, then $\mathbf B$ is a $4$-cycle and as $d(x'), d(y') \ge 4$, we  must have $x'y' \in E(G)$, which implies that $G$ is isomorphic to $Q_2^+$ and $\mathbf B$ is a special cycle, contrary to Lemma~\ref{lem-nospecialcycle}.  
Therefore $A \ne \{u,v\}$ and 
since $y\notin A$, we deduce that $v\notin A$. This implies $d(v)=4$.  
Then $v$ has another neighbour in $J$, and 
by the observation that $y$ and $v$ have only one common neighbour $y'$, 
we deduce that $v$ is non-adjacent to $z$. 
Thus $v$ is adjacent to $x'$, and $x'$ is adjacent to $y'$. 

Furthermore every vertex in $B-\{u,x,y,v\}$ has degree at most $3$, 
because $\mathbf B$ has no chords and $x'$ is the only possible interior neighbor. By Lemma~\ref{d4}, every vertex in $B-\{u,x,y,v\}$ is in $A$. Then $G$ is isomorphic to $Q_4^{++}$ and $\mathbf B$ is a special cycle, contrary to Lemma~\ref{lem-nospecialcycle}.

\begin{figure}
	\centering
	\begin{tikzpicture}[xscale=.7,yscale=.3,rotate=-90]
        \tikzstyle{every node}=[circle,draw,fill=black!50,inner sep=0pt,minimum width=4pt]
        \filldraw [fill=black!20] 
           (60:3) node (a) [label=right:$a'\in A$]{}
           -- (120:.7) node (y)[label=below:$z$] {} 
           -- (240:.7) node (z)[label=below:$y$] {}
           -- (-60:3) node (b) [label=left:$a\in A$]{}
           -- cycle;
        \node at (180:3) (x)[label=$x$]{};
        \draw (a)--(x)--(b);
        \draw (x)--(y); \draw (z)--(x);
	\end{tikzpicture}
	\caption{Case 2 in the proof of Lemma~\ref{d=5}. The gray region may have other vertices.}\label{fig:d=5c2}
\end{figure}
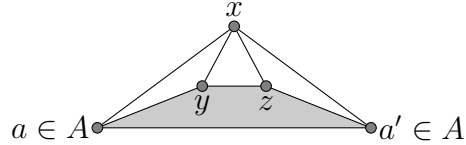

\medskip
\noindent
{\it Case 2:} $N_G(x) \cap B \subseteq A$.	 Then 
 $\mathbf{B}=xaa'x$. 
 Since $G$ is a near plane triangulation and $d(x)=4$,
 the neighbours of $x$ form a path of length $3$ from $a$ to $a'$, say $ayza'$ where $a$, $y$, $z$, $a'$ are the neighbours of $x$. (See Figure~\ref{fig:d=5c2}.)
 
If $|N^\circ(y)|\ge 3$, then deleting $y$ and collecting $x$ exposes at least three vertices in $N^{\circ}(y)$. 
Let $G'=G-\{x,y\}$. Then  $\abs{B(G')}\ge\abs{B}+2$. With $X=\{y\}, Y=\{x\}$, and $s=2$, this contradicts Lemma~\ref{cor-red}.  
 
Thus $|N^\circ(y)|\le 2$ and so $d(y) \le 5$. (Note that $y$ may be adjacent to $a'$.) By symmetry, $|N^\circ(z)|\le 2$ and $d(z)\le 5$.

If $y$ is adjacent to $a'$, then 
$z$ is non-adjacent to $a$ and so $d(z)=4$ by Lemma~\ref{d4}.
Then $T:=yza'y$ is a separating triangle, as $\inn(T)$ contains a neighbour of $z$. 
Since $d(y)\le 5$ and $d(z)=4$, we have $|N(\{y,z\})\cap V(\inn(T))|=1$, %
contrary to Lemma~\ref{triangle}(d).
 
So $y$ is non-adjacent to $a'$. 
By symmetry, $z$ is non-adjacent to $a$. As $\abs{N^{\circ}(y)}, \abs{N^{\circ}(z)}\le 2$ and $d(y), d(z)\ge 4$,  $y$ and $z$ have a unique common neighbour $w$ and $d(y)=d(z)=4$. Since $G$ is a near plane triangulation, 
$w$ is adjacent to both $a$ and $a'$.

If $d(w)>4$, then deleting $w$ and collecting $y$, $z$, $x$ exposes at least one vertex and so $|B(G-\{x,y,z,w\})|\ge |B|$. With $X= \{w\}, Y=\{x,y,z\}$, and $s=0$, this contradicts Lemma~\ref{cor-red}. 
This implies $d(w)=4$, hence $B(G)$ is a special cycle, contrary to Lemma~\ref{lem-nospecialcycle}.
\end{proof}
	
\section{The boundary is a triangle}\label{sec:triangle}
In this section we prove that $|B|=3$. 

\begin{lemma} If $xy\in E(\mathbf B-A)$,
	\label{property}
	then the following hold:
	\begin{enum}
	\item\label{su} There are  $S:=\{x_1, x_2, u, y_1, y_2\}\sub V-B$ and $x^*,y^*\in B$ such that   $x^*x_1 x_2 u y$ is a path in $G[N(x)]$ and 
	$x u y_1 y_2y^*$ is a path in $G[N(y)]$.
	\item \label{xuy} $d(x_2), d(u), d(y_1) \ge 5$.
	\item \label{dist}The vertices $x_1, x_2, u, y_1, y_2$ are all distinct.
		\item \label{7-4} $|N^{\circ}(\{x_2, u\})-S| \le 2$ and   $|N^{\circ}(\{y_1, u\})  -S | \le 2$.
		\item\label{indu-}   $x_2y_1,x_2y_2,x_1y_1,ux_1,uy_2\notin E$.
		\item \label{7-7} There is $w_1\in (N(\{x_2,u,y_1\})\cap B)-\{x,y\}$; in particular  $G[S]$ is an induced path.  
		\item\label{7-6} $x_2,u\notin N(x^*)$ and $y_1, u\notin N(y^*)$.    
		\item \label{7-8} Neither $x^*$ nor $y^*$ is equal to 
		the vertex $w_1$ from \ref{7-7}.
	\end{enum}
\end{lemma}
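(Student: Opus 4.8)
Since $(G;A)$ is an extreme counterexample, $x$ and $y$ are boundary vertices avoiding $A$, so $d(x)=d(y)=5$ by Lemma~\ref{d=5}, while $G$ is a $2$-connected (Lemma~\ref{cutvertex}) near plane triangulation (Lemma~\ref{lem-neartriangulation}) with chordless boundary (Lemma~\ref{nochord}). Part~(a) then follows from this structure: the link of a boundary vertex of a $2$-connected near triangulation is a path joining its two boundary neighbours, so the link of $x$ is a path of length $4$ from $x^*$ (its second boundary neighbour) to $y$, and likewise the link of $y$ is a path of length $4$ from $x$ to $y^*$; here $u$ is the apex of the unique inner triangle on the boundary edge $xy$, and $x_1,x_2,u,y_1,y_2\in N^{\circ}(x)\cup N^{\circ}(y)\subseteq V-B$ while $x^*,y^*\in B$.

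For parts~(b), (c), and (e) I would run one combined case analysis over the ways this local picture can degenerate: an identification among $x_1,x_2,y_1,y_2$; one of the edges $x_2y_1$, $x_2y_2$, $x_1y_1$, $ux_1$, $uy_2$; or $d(u)=4$, $d(x_2)=4$, or $d(y_1)=4$. In each case I would produce a small reducible configuration contradicting something already established. The typical outcomes are: a vertex of $S$ is forced to degree at most $3$, contradicting Lemma~\ref{d4} (for instance $x_2=y_1$ closes the rotation at $u$ to $N_G(u)=\{x,y,x_2\}$, and $ux_1\in E$ makes $xux_1x$ a separating triangle whose interior, if it were $\{x_2\}$, would make $d(x_2)=3$); or a short separating triangle or induced separating $4$-cycle has too small an interior, contradicting Lemma~\ref{triangle}(b),(d) or Lemma~\ref{C4septri} (for instance $x_2y_1\in E$ yields the separating $4$-cycle $xx_2y_1yx$ enclosing only $u$, which---once it is checked to be induced using the identifications already excluded---Lemma~\ref{C4septri} forces to have at most two interior vertices, all of degree $4$, a configuration then eliminated directly); or a connected set of at most $4$ interior vertices becomes collectable after a single deletion in a way that exposes too many vertices, contradicting Lemma~\ref{lem-a} or Lemma~\ref{cor-red}. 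In particular $d(u)\ge 5$, because $d(u)=4$ would close the rotation at $u$ with a facial triangle $uy_1x_2$, i.e.\ $x_2y_1\in E$, already excluded; and $d(x_2),d(y_1)\ge 5$ come from the analogous analysis of a putative fourth neighbour $t$ of $x_2$ (which would force facial triangles $x_2ut$ and $x_2tx_1$, hence $tu,tx_1\in E$), after which deleting one vertex and collecting a short interior chain contradicts Lemma~\ref{cor-red}.

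Granting (a)--(c) and (e), part~(d) is a counting step: if $|N^{\circ}(\{x_2,u\})-S|\ge 3$, then deleting $x$ and collecting the resulting low-degree neighbours of $x_2$ and $u$ exposes at least three interior vertices not previously on the boundary, so $|B(G-(X\cup Y))|\ge |B(G)|+s$ with $s+|Y|\ge 3|X|$ and $(X\cup Y)\cap B\ne\emptyset$ (through $x$), contradicting Lemma~\ref{cor-red}; the bound for $\{y_1,u\}$ is symmetric. For~(f), suppose $N(\{x_2,u,y_1\})\cap B\subseteq\{x,y\}$; then the connected interior set $\{x_2,u,y_1\}$, together with whatever it encloses, is cut off from the rest of $G$ by a short cycle through $x$ and $y$ built from the facial triangle $xuy$, and deleting $u$, or applying Lemma~\ref{cor-reducible} to that separating cycle, yields a contradiction. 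Once $w_1$ exists, any chord of $G[S]$ is either one of the non-edges of~(e) or creates a small separating triangle, so $G[S]$ is an induced path. For~(g) and (h): putting $u$ or $x_2$ into $N(x^*)$ makes $xx^*ux$ (resp.\ $xx^*x_2x$) a separating triangle, and since $d(x)=5$ forces $x$ to have exactly two neighbours strictly inside it, Lemma~\ref{triangle}(a) combined with the degree bound $d(x^*)\le 5$ from Lemma~\ref{deg<6} and the non-edges of~(e) is violated; the statement at $y^*$ is symmetric; and $w_1\in\{x^*,y^*\}$ would reinstate one of these forbidden edges or separating triangles, so $w_1\notin\{x^*,y^*\}$.

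The main obstacle is organising the case analysis behind (b), (c), and (e): because $x$ and $y$ lie on $B$, the clean separating-triangle dichotomy of Lemma~\ref{sep-triangle} is unavailable for triangles through $x$ or $y$, so many sub-cases must instead be closed via the $\partial$-counting lemmas (Lemmas~\ref{cor-red} and~\ref{cor-reducible}); this requires careful bookkeeping of precisely which interior vertices become exposed after each deletion--collection, and of which of the earlier parts (a)--(e) may legitimately be invoked in each sub-case.
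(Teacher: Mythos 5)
Your overall toolbox is the right one, and part~\ref{su} is fine, but the way you have organised parts \ref{xuy}, \ref{dist}, \ref{indu-} is circular, and two later parts are not actually proved. For \ref{xuy} you derive $d(u)\ge 5$ from ``$x_2y_1\notin E$, already excluded''; but excluding $x_2y_1\in E$ is part \ref{indu-}, and the only route you offer to it is the $4$-cycle $xx_2y_1yx$ together with Lemma~\ref{C4septri} --- which is inapplicable, since that cycle passes through the boundary vertices $x$ and $y$ while Lemma~\ref{C4septri} requires a separating induced $4$-cycle with no vertex in $B(G)$ (you note yourself that Lemma~\ref{sep-triangle} is unavailable for triangles through $x$ or $y$, but the same restriction bites here). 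In the paper the order is forced: $d(x_2),d(u),d(y_1)\ge5$ are proved first by deleting $u$ (resp.\ $x_2$) and collecting $\{x_2,x,y\}$ (resp.\ $\{u,x,y\}$) against Lemma~\ref{cor-red}, then \ref{dist} and \ref{7-4}, and only then is $x_2y_1\in E$ killed, via the interior triangle $ux_2y_1u$, Lemma~\ref{triangle} and the bound \ref{7-4}, ending in a $K_5$. Your argument for \ref{7-4} is also garbled as stated: deleting $x$ alone lets you collect nothing useful ($x_2,u$ still have degree $\ge4$, and their interior neighbours have uncontrolled degree), so no exposure count of the form $s+|Y|\ge 3|X|$ materialises; the working move is $X=\{x_2,u\}$, $Y=\{x,y\}$.

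Parts \ref{7-6} and \ref{7-8} have genuine gaps as well. For \ref{7-6} your triangle $xx^*x_2x$ (or $xx^*ux$) again contains the boundary vertex $x$, so Lemma~\ref{sep-triangle} cannot be used, and the count you propose from Lemma~\ref{triangle}(a) plus $d(x^*)\le5$ does not close, because $x_2$ (or $u$) may have degree $6$ and supply the missing interior edges; the paper instead passes to the triangle $x^*x_1x_2x^*$, extracts the unique interior neighbour $w$ of $x^*$, and applies Lemma~\ref{sep-triangle} to the interior triangle $wx_1x_2w$. For \ref{7-8}, $w_1\in\{x^*,y^*\}$ does \emph{not} ``reinstate'' anything forbidden by \ref{7-6}: part \ref{7-6} only forbids $u,y_1\in N(y^*)$, so the whole content of \ref{7-8} is the case $w_1=y^*$ with $x_2$ adjacent to $y^*$, and ruling it out requires the paper's separate argument (showing $y^*\in A$ by applying \ref{7-7} to the edge $yy^*$, hence $x^*\notin A$ and $d(x^*)=5$, and then a deletion--collection of $X=\{x_1,u\}$, $Y=\{x,x^*,y,y_1\}$ against Lemma~\ref{cor-red}). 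Part \ref{7-7} is likewise only gestured at; the paper's proof needs the degree conclusions $d(x_2)=d(y_1)=5$, a counting step forcing $x_2$ and $y_1$ to share their interior neighbours, and a final collection argument, none of which is replaced by the ``cut off by a short cycle through $x$ and $y$'' sketch.
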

\begin{proof}
\ref{su} By Lemma~\ref{d=5}, $d(x)=5=d(y)$. By Lemmas~\ref{cutvertex} and \ref{nochord}, there are $x^*,y^*\in B$ with $N(x)\cap B=\{x^*,y\}$ and $N(y)\cap B=\{x,y^*\}$. As $G$ is a near plane triangulation, there is $u\in N(x)\cap N(y)$. So \ref{su} holds.

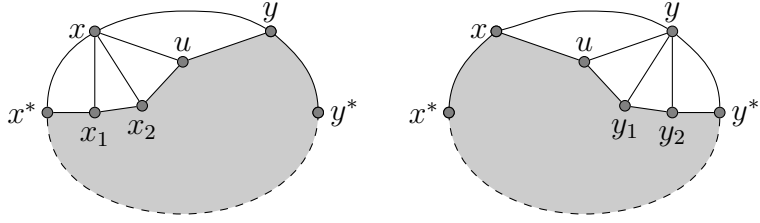
\begin{figure}
	\centering
	\begin{tikzpicture}[scale=.9]
	\tikzstyle{every node}=[circle,draw,fill=black!50,inner sep=0pt,minimum width=4pt]
	\node [label=$u$] at (0,.75)(u) {};
	\node [label=below:$x_2$] at (-.6,.1) (x2) {};
	\node [label=below:$x_1$] at (-1.3,0) (x1) {};
	\node [label=left:$x$] at (-1.3,1.2) (x) {};
	\node [label=left:$x^*$] at (-2,0) (x^*) {};
	\node [label=right:$y^*$] at (2,0) (y^*) {};
	\node [label=above:$y$] at (1.3,1.2) (y) {};
	\draw (x^*)--(x1)--(x)--(x2)--(u)--(y);
	\draw (x1)--(x2);
	\draw (x)--(u);
	\scoped [on background layer] \draw (x^*) to [out=90,in=180] (0,1.5) to [out=0,in=150] (y) ;
	\scoped [on background layer] \fill [black,fill opacity=.2] (u.center)--(x2.center)--(x1.center)--(x^*.center)--(x^*.south) to [out=-90,in=180] (0,-1.5) to [out=0,in=-90] (y^*.center) to [out=90,in=-40](y.center)--(u.center)--cycle;
	\draw [dashed,on background layer] (x^*) to [out=-90,in=180] (0,-1.5) to [out=0,in=-90] (y^*.center) ;
	\draw [on background layer] (y^*.center) to [out=90,in=-40] (y.center);
	\node  at (1.3,1.2)  {};
	\node  at (2,0)  {};
	\end{tikzpicture} 
	\quad
	\begin{tikzpicture}[scale=.9]
		\tikzstyle{every node}=[circle,draw,fill=black!50,inner sep=0pt,minimum width=4pt]
		\node [label=$u$] at (0,.75)(u) {};
		\node [label=left:$x$] at (-1.3,1.2) (x) {};
		\node [label=left:$x^*$] at (-2,0) (x^*) {};
		\node [label=right:$y^*$] at (2,0) (y^*) {};
		\node [label=above:$y$] at (1.3,1.2) (y) {};
		\node [label=below:$y_2$] at (1.3,0) (y2) {};
		\node [label=below:$y_1$] at (.6,.1) (y1) {};
		\draw (y^*)--(y2)--(y1)--(u)--(x);
		\draw (y)--(y1);
		\draw (y)--(u);
		\draw (y)--(y2);
		\scoped [on background layer] \draw (x) to [out=15,in=180] (0,1.5) to [out=0,in=150] (y);
		\scoped [on background layer] \fill [black,fill opacity=.2] (u.center)--(y1.center)--(y2.center)--(y^*.center)--(y^*.south) to [out=-90,in=0] (0,-1.5) to [out=180,in=-90] (x^*.center) to [out=90,in=-140](x.center)--(u.center)--cycle;
		\draw [dashed,on background layer](y^*.south) to [out=-90,in=0] (0,-1.5) to [out=180,in=-90] (x^*.center);
		\draw [on background layer] (x^*.center) to [out=90,in=-140] (x.center);
		\draw [on background layer] (y^*) to [out=90,in=-40] (y);
		\node at (-1.3,1.2)  {};
		\node at (-2,0)  {};
		\end{tikzpicture} 
	\caption{The situation in the proof of Lemma~\ref{property}\ref{xuy}.}
	\label{fig:x2deg} %
\end{figure}

\medskip\noindent
\ref{xuy} 
(See Figure~\ref{fig:x2deg}.)
As $d(u)\ge 4$ by Lemma~\ref{d4}, $x_2\ne y_1$.
Assume $d(x_2)=4$. 
If $x_2$ is adjacent to $y$, then 
$x_2=y_2$, implying that $d(x_2)>4$, contradicting the assumption. 
Thus $x_2$ is non-adjacent to $y$ and 
deleting $u$ and collecting $x_2$, $x$, $y$ exposes $y_1$, $y_2$ (note that it is possible that $x_1\in \{y_1,y_2\}$, so we do not count it as exposed).
We have $|B(G-\{u,x_2,x,y\})|\ge |B|$.
With $X=\{u\}, Y=\{x_2,x,y\}$, and $s=0$, this contradicts  
 Lemma~\ref{cor-red}. Thus $d(x_2)\ge 5$ by Lemma~\ref{d4}. 
 By symmetry, $d(y_1)\ge 5$. 
If $d(u)=4$, then we can delete $x_2$, collect $u,x,y$, and expose $y_1,y_2$.
This contradicts Lemma~\ref{cor-red} applied with $X=\{x_2\}, Y=\{u,x,y\}$, and $s=0$.
  So \ref{xuy} holds.

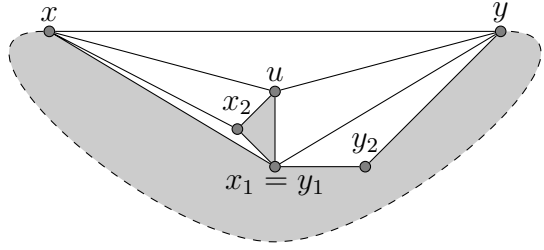
\begin{figure}
	\centering
        \begin{tikzpicture}
          \tikzstyle{every node}=[circle,draw,fill=black!50,inner sep=0pt,minimum width=4pt]
          \fill [fill=black!20]  
          (0,.5) node (u) [label=$u$] {}
          -- (0,-.5) node (v) [label={[yshift=2.8ex]below:{$x_1=y_1$}}] {}
          -- (-.5,0) node (x2) [label=above:$x_2$]   {}
          -- cycle;
          \node [label=above:$x$] at (-3,1.3) (x) {};
          \node [label=above:$y$] at (3,1.3) (y) {};
          \node [label=above:$y_2$] at (1.2,-.5) (y2) {};
          \scoped[on background layer]\fill [fill=black,fill opacity=.2,text opacity=1] 
          (x.center)--(x.west)
          to [out=-180,in=-180] (0,-1.5) 
          to [out=0,in=0] 
          (y.east)--(y.center)
          -- (y2.center)--(v.center) --cycle;
          \draw (x)--(u)--(y)--(y2)--(v)--(x)--(x2)--(u)--(v)--(x2);
          \draw (v)--(y);
          \draw (x) -- (y);
          \draw [dashed] (x) to [out=-180,in=-180] (0,-1.5) to [out=0,in=0] (y);
        \end{tikzpicture}
	\caption{When $x_1=y_1$ in the proof of  Lemma~\ref{property}\ref{dist}. Gray regions may have other vertices.}
	\label{B1-|N(xy)|=4-0} %
\end{figure}

\medskip\noindent
\ref{dist} Since $d(u)\ge 5$, we deduce $x_2\ne y_1$, and if $x_1=y_1$,  then  $T:=x_1x_2ux_1$ is a separating triangle (see Figure~\ref{B1-|N(xy)|=4-0}), since $d(x_2)\ge 5$. As $\|x_2,V(\ext(T))\|=1$ and $\|u,V(\ext(T))\|=2$, this contradicts Lemma~\ref{sep-triangle}.   So $x_1 \ne y_1$. By symmetry,    $x_2 \ne y_2$.

It remains to show that $x_1\ne y_2$. Suppose not. 
By \ref{xuy}, $d(x_2) 
\ge 5$, so $C:=x_1x_2uy_1x_1$ is a separating $4$-cycle (see Figure~\ref{B1-|N(xy)|=4-1}). We first prove the following.
\begin{equation}\label{cl1}
\text{For all $u'\in V(C)-\{u\}$, $|N(\{u,u'\})  \cap V(\inn(C))| \le 3$.}
\end{equation}
 Suppose not. 
 Then deleting $u$, $u'$ and collecting $x$, $y$ exposes
 two vertices in $V(C)-\{u,u'\}$ and at least $4$ vertices in $\inn(C)$.
 So 
 $|B(G-\{u,u',x,y\})|\ge|B|-2+2+4$.
This contradicts Lemma~\ref{cor-red} with $X=\{u,u'\}$, $Y=\{x,y\}$, and $s=4$. This proves \eqref{cl1}.

\begin{figure}
	\centering
	\begin{tikzpicture}
	\tikzstyle{every node}=[circle,draw,fill=black!50,inner sep=0pt,minimum width=4pt]
	\node [label=$u$] at (0,.75)(u) {};
	\node [label={[yshift=2.8ex]below:{$x_1=y_2$}}] at (0,-.75)(v) {};
	\node [label=right:$x_2$] at (-1,0) (x2) {};
	\node [label=left:$x$] at (-2,0) (x) {};
	\node [label=right:$y$] at (2,0) (y) {};
  \node [label=left:$y_1$] at (1,0) (y1) {};
  \scoped[on background layer]
  \fill [black,fill opacity=.2] 
    (u.center)--(x2.center)--(v.center)--(y1.center)--cycle
  	(x.center) to [out=-90,in=180] (0,-1.5) to [out=0,in=-90] (y.center)
    -- (v.center)  --cycle;	
  \scoped[on background layer]\draw [dashed] (x.center) to [out=-90,in=180] (0,-1.5) to [out=0,in=-90] (y.center);	
  \draw (x)--(u)--(y)--(y1)--(v)--(x)--(x2)--(u)--(y1)--(v)--(x2);
  \draw (v)--(y);
  \draw (x) to [out=90,in=180] (0,1.5) to [out=0,in=90] (y);  
	\end{tikzpicture}
	\caption{When $x_1=y_2$ in Lemma~\ref{property}\ref{dist}. Gray regions may have other vertices.}
	\label{B1-|N(xy)|=4-1} %
\end{figure}
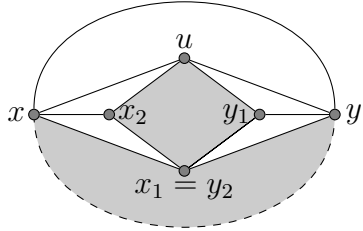

If $u$ is adjacent to $x_1$, then 
 $C_1:=x_1x_2ux_1$ and $C_2:=x_1uy_1x_1$ are both separating triangles by \ref{xuy}.  Then $|N(\{u,x_1\})\cap V(\inn(C_i))|\ge2$ for each $i\in \{1,2\}$ by Lemma~\ref{triangle}(d). Thus $|N(\{u,x_1\})  \cap V(\inn(C))| \ge 4$, contrary to \eqref{cl1}. So $u$ is non-adjacent to $x_1$.

If $x_2$ is adjacent to $y_1$, then $C_3:=ux_2y_1u$ is a separating triangle by \ref{xuy}. Then  $|N(\{u,x_2\})\cap V(\inn(C_3))|\ge2$ by Lemma~\ref{triangle}(d). As $\|u,V(\ext(C_3))\|=2$, Lemma~\ref{sep-triangle} implies that $\|x_2,V(\ext(C_3))\|\ge4$, hence $|N(\{u,x_2\})\cap V(\inn(x_1x_2y_1x_1))|\ge2$. Thus $|N(\{u,x_2\})  \cap V(\inn(C))| \ge 4$,  contrary to \eqref{cl1}.
So $C$ has no chord.

By \ref{xuy}, $C$ is a separating induced cycle of length $4$ in $G$. By Lemma~\ref{C4septri}, either $|B(\inn(C))|\ge 4$ or $\abs{V(\inn(C))}\le 2$ and every vertex in $\inn(C)$ has degree $4$ in~$G$.

By \eqref{cl1}, $d(x_2),d(y_1) \le6$.  
If $|B(\inn(C))|\ge 4$, then 
deleting $u$, $x_1$ and collecting $x$, $y$, $x_2$, $y_1$ exposes at least $4$ vertices 
and therefore $|B(G-\{u,x_1,x,y,x_2,y_1\})|\ge |B|+2$. 
This contradicts Lemma~\ref{cor-red} applied with $X=\{u,x_1\}$,  $Y=\{x,y,x_2,y_1\}$, and $s=2$.

Therefore we may assume $1\le \abs{V(\inn(C))} \le 2$
and every vertex in $\inn(C)$ has degree $4$ in $G$. 
As $x_2$ is non-adjacent to $y_1$, $x_1$ has at least one neighbour in $\inn(C)$
and therefore after deleting $x_1$, we can collect all vertices in $V(\inn(C))$ 
and then collect $x_2$, $y_1$ and $u$, this contradicts Lemma~\ref{lem-a}.
So \ref{dist} holds.

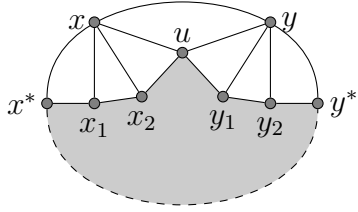
\begin{figure}
	\centering
	\begin{tikzpicture}[scale=.9]
	\tikzstyle{every node}=[circle,draw,fill=black!50,inner sep=0pt,minimum width=4pt]
	\node [label=$u$] at (0,.75)(u) {};
	\node [label=below:$x_2$] at (-.6,.1) (x2) {};
	\node [label=below:$x_1$] at (-1.3,0) (x1) {};
	\node [label=left:$x$] at (-1.3,1.2) (x) {};
	\node [label=left:$x^*$] at (-2,0) (x^*) {};
	\node [label=right:$y^*$] at (2,0) (y^*) {};
	\node [label=right:$y$] at (1.3,1.2) (y) {};
	\node [label=below:$y_2$] at (1.3,0) (y1) {};
	\node [label=below:$y_1$] at (.6,.1) (y2) {};
	\draw (x^*)--(x1)--(x)--(x2)--(u)--(y)--(y2)--(y1)--(y);  
	\draw (x1)--(x2);
	\draw (x)--(u)--(y2);
	\draw (y1)--(y^*);
	\scoped [on background layer] \draw (x^*) to [out=90,in=180] (0,1.5) to [out=0,in=90] (y^*);
	\scoped [on background layer] \fill [black,fill opacity=.2] (u.center)--(x2.center)--(x1.center)--(x^*.center)--(x^*.south) to [out=-90,in=180] (0,-1.5) to [out=0,in=-90](y^*.south)-- (y^*.center)--(y1.center)--(y2.center)--cycle;
	\draw [dashed] (x^*) to [out=-90,in=180] (0,-1.5) to [out=0,in=-90] (y^*);
	\end{tikzpicture} 
	\caption{The situation in the proof of Lemma~\ref{property}\ref{7-4}; $x_1, x_2, u, y_1, y_2$ are all distinct. The gray region has other vertices.}
	\label{B1-distinct} %
\end{figure}

\medskip\noindent
\ref{7-4}   (See Figure~\ref{B1-distinct}.)	If $|N^{\circ}(\{x_2,u\})-S|\ge 3$,
then deleting $x_2$, $u$ and collecting $x$, $y$ exposes $x_1$, $y_1$, $y_2$, and three other vertices and so $|B(G-\{x_2,u,x,y\})|\ge |B|-2+6$. By applying Lemma~\ref{cor-red} with $X=\{x_2,u\}$, $Y=\{x,y\}$, and $s=4$,  we obtain a contradiction. So we deduce that $|N^{\circ}(\{x_2,u\})-S|\le 2$. By symmetry, $|N^{\circ}(\{y_1,u\})-S|\le 2$.

\medskip\noindent
\ref{indu-} Suppose $x_1$ is adjacent to $u$. By \ref{xuy} and \ref{7-4}, $d(x_2)= 5$. Thus
$T:=ux_1x_2u$ is a separating triangle.  
Let $w_1$, $w_2$ be the two neighbours of $x_2$ other than $x_1$, $x$, $u$
so that $x_1w_1 w_2u$ is a path in $G$. Such a choice exists because
$G$ is a near plane triangulation.
 As $d(w_2)\ge 4$ by Lemma~\ref{d4}, and $u$ has no neighbours in $\inn(ux_1w_1w_2u)$ by \ref{7-4}, 
  $x_1$ is adjacent to $w_2$. 
 As $d(w_1)\ge 4$,  $T':=x_1w_1w_2x_1$ is a separating triangle. 
 Note that $x_2x_1w_1x_2$, $x_2w_1w_2x_2$, $x_2w_2ux_2$, and $ux_1w_2u$ are facial triangles. Thus $\|w_1,V(\ext(T'))\|=1$ and $\|w_2,V(\ext(T'))\|=2$, contrary to Lemma~\ref{sep-triangle}. 
So  $x_1$ is non-adjacent to $u$. 
By symmetry, $y_2$ is non-adjacent to $u$.

Suppose that $x_2$ is adjacent to $y_1$. Let $T'':=ux_2 y_1u$. By \ref{xuy}, $d(u)\ge 5$, so $T''$ is a separating triangle. 
By \ref{7-4}, $\|z,V(\inn(T''))\|\le2$ for all $z\in V(T'')$.
By Lemma~\ref{triangle}, 
\[\sum_{z\in V(T'')} \|z,V(\inn(T''))\|=\|V(T''),V(\inn(T''))\|\ge6\] 
and therefore $\|z,V(\inn(T''))\|=2$ for all $z\in V(T'')$.
By \ref{7-4}, $N(u)\cap V(\inn(T''))=N(x_2)\cap V(\inn(T''))=N(y_1)\cap V(\inn(T''))$.
Then $u$, $x_2$, $y_1$, and their neighbours in $\inn(T'')$ induce a $K_5$ subgraph, contradicting our assumption on $G$.
Thus $x_2$ is non-adjacent to $y_1$.

Suppose that $x_2$ is adjacent to $y_2$. Since $x_2$ is non-adjacent to $y_1$, \ref{xuy}  and \ref{7-4} imply that $d(y_1)=5$. 
Let $w_1$, $w_2$ be the two neighbours of $y_1$ other than $u$, $y$, $y_2$ such that $uw_1w_2y_2$ is a path in $G$.
By~\ref{7-4}, $N^\circ(u)-S\subseteq \{w_1,w_2\}$. If $u$ is adjacent to both $w_1$ and $w_2$, then $uw_1w_2u$, $uy_1w_1u$, $y_1w_1w_2y_1$ are facial triangles, implying that $w_1$ has degree $3$, contradicting Lemma~\ref{d4}. Thus, as $d(u)\ge 5$ by \ref{xuy}, we deduce that $d(u)=5$. Since $G$ is a near plane triangulation, $x_2$ is adjacent to $w_1$ and $ux_2w_1u$, $uw_1y_1u$ are facial triangles.
If $x_2w_1 w_2 y_2x_2$ is a separating cycle, then 
deleting $w_1$, $w_2$ and collecting $y_1$, $u$, $y$, $x$ exposes at least $4$ vertices
and so $\abs{B(G-\{w_1,w_2,y_1,u,y,x\})}\ge \abs{B}-2+4$.
By applying Lemma~\ref{cor-red} with $X=\{w_1, w_2\}$, $Y=\{y_1, u, y, x\}$, and $s=2$, we obtain a contradiction. 
So $x_2w_1 w_2 y_2x_2$ is not a separating cycle.
By Lemma~\ref{d4}, $d(w_2)\ge 4$ and therefore $w_2$ is adjacent to $x_2$ and 
  $d(w_1) =4= d(w_2)$. Then, deleting $y_1$ and collecting $w_1$, $w_2$, $u$, $y$, $x$ exposes $3$ vertices and $\abs{B(G-\{y_1,w_1,w_2,u,y,x\})}= \abs{B}-2+3$. By applying Lemma~\ref{cor-red} with  $X=\{y_1\}$, $Y=\{w_1, w_2, u, y,x\}$, and $s=1$, we obtain a contradiction. 
So $x_2$ is non-adjacent to $y_2$. By symmetry, $x_1$ is non-adjacent to $y_1$.

\medskip\noindent
\ref{7-7} Suppose that none of $x_2$, $u$, $y_1$ has neighbours in $B-\{x,y\}$.
By \ref{xuy}, \ref{7-4}, and \ref{indu-}, 
 $d(x_2)=5=d(y_1)$.  
 If \[|N^{\circ}(\{x_2,y_1\})-\{u\}|\ge5\] then 
 deleting $u$, $x_2$ and collecting $x$, $y$, $y_1$ exposes all vertices in $N^{\circ}(\{x_2,y_1\})-\{u\}$ and so 
 $\abs{B(G-\{u,x_2,x,y,y_1\})}\ge \abs{B}-2+5$.
 By applying Lemma~\ref{cor-red} with
  $X=\{u,x_2\}$, $Y=\{x,y,y_1\}$, and $s=3$, we  obtain a contradiction. Thus 
  $|N^{\circ}(\{x_2,y_1\})-\{u\}|\le 4$ and therefore   
  $x_2$, $y_1$ have the same set of neighbours in $V(G)-(B\cup S)$
  by \ref{dist} and \ref{indu-}. Let $w$, $w'$ be the neighbours of $x_2$ (and also of $y_1$) such that $w \in V(\inn(uy_1w'x_2u))$.
  Then $w$ is the unique common neighbour of $x_2$, $u$, and $y_1$. By \ref{7-4} and Lemma~\ref{d4}, $w$ is adjacent to $w'$. Thus $d(w)=4$.
  Deleting $u$ and collecting $w$, $x_2$, $y_1$, $x$, $y$ exposes at least $3$ vertices including $w'$ and so 
  $\abs{B(G-\{u,w,x_2,y_1,x,y\})}\ge \abs{B}-2+3$.
  This contradicts Lemma~\ref{cor-red} applied with $X=\{u\}$, $Y=\{w,x_2,y_1,x,y\}$, and $s=1$.

Thus at least one vertex of $x_2$, $u$, and $y_1$ is adjacent to a vertex in $B-\{x,y\}$. 
Then $x_1$ is non-adjacent to $y_2$. By \ref{indu-},   $G[S]$ is an induced path and \ref{7-7} holds.

\medskip\noindent 
\ref{7-6} Suppose that $x^*$ is adjacent to $x_2$.
 As $d(x_1) \ge 4$ by Lemma~\ref{d4}, $T:=x^*x_1 x_2 x^*$ is a separating triangle.
Since $d(x^*) \le 5$ by Lemma~\ref{deg<6}, $x^*$ has a unique neighbour $w\in V(\inn(T))$. So $w$ is adjacent to both $x_1$ and $x_2$.  As $d(w)\ge 4$ by Lemma~\ref{d4}, $T':=wx_1x_2w$ is a separating triangle with 
 $\|w, V(\ext(T'))\|=1$ and $\|x_1, V(\ext(T'))\|=2$,  
contrary to Lemma~\ref{sep-triangle}. So $x^*$ is non-adjacent to $x_2$. By symmetry, $y^*$ is non-adjacent to $y_1$.

Suppose $u$ is adjacent to $x^*$. 
As $d(x_1) \ge 4$ and $d(x^*) \le 5$ by Lemmas~\ref{d4} and \ref{deg<6}, $x^*$ has a unique neighbour $w\in V(\inn( x^*x_1x_2ux^*))$  adjacent to both $x_1$ and $u$.
By \ref{xuy} and \ref{7-4}, 
$w$ is adjacent to $x_2$.
If $uwx_2u$ is a separating triangle, then by Lemma~\ref{triangle}(d), $|N(\{x_2,u\})\cap V(\inn (uwx_2u))|\ge 2$, hence $|N^{\circ}(\{x_2,u\})-S|\ge 3$, contrary to \ref{7-4}. So $uwx_2u$ is facial.  
As $d(x^*) \le 5$,  $wx^*x_1w$  and $wx^*uw$ are  facial triangles. 
  As $d(x_2) \ge 5$ by \ref{7-4},   $T':=wx_1x_2w$ is a separating triangle.  So $\|x_1, V(\ext(T'))\|=2$ and $\|x_2, V(\ext(T'))\|=2$,   
contrary to Lemma~\ref{sep-triangle}.   Thus $u$ is non-adjacent to $x^*$. By symmetry, $u$ is non-adjacent to $y^*$. So \ref{7-6} holds.

   \medskip\noindent 
  \ref{7-8} Suppose that $w_1=y^*$. By \ref{7-6}, $y^*$ is adjacent to $x_2$. Let $C:= y^*x_2 u y_1 y_2 y^*$ and
  $C'$ be the cycle formed by the path from $x^*$ to $y^*$ in $\mathbf B(G)-x-y$ together with the path $y^* x_2x_1x^*$.
  Since $G$ is a near plane triangulation and $d(y_2)\ge 4$, by \ref{7-7}
  there is $w\in N(y^*)\cap N(y_2)\cap V(\inn(C))$. By Lemma~\ref{deg<6}, $d(y^*)=5$,
  and 
  therefore $x_2$ is adjacent to $w$ and $x_2wy^*x_2$ is a facial triangle.
  Let $y^{**}\in B$ be the neighbour of $y^*$ other than $y$.
  Then $x_2y^{**}y^*x_2$ is also a facial triangle in $G$. Because $x_2$ is non-adjacent to $x^*$ by \ref{7-6}, $y^{**}\ne x^*$. 
  By \ref{7-7} applied to $yy^*$, we have $y^*\in A$ because $uy_1y_2wx_2$ is not an induced path in $G$. Thus $x^*\notin A$ because $\abs{A}=2$. 
  By Lemma~\ref{nochord}, $\mathbf B(G)$ is chordless.
  Therefore by Lemma~\ref{d=5}, $d(x^*)=5$ and so $\|x^*,V(\inn(C'))\|=2$.
  By \ref{xuy}, \ref{7-4}, and \ref{indu-}, we have $\abs{N^\circ(y_1)- S}=2$.
  Deleting $x_1$, $u$ and collecting $x$, $x^*$, $y$, $y_1$ exposes at least $6$ vertices, including two neighbours of $x^*$ in $\inn(C')$ 
  and two neighbours of $y_1$ in $\inn(C)$. So $\abs{B(G-\{x_1,u,x,x^*,y,y_1\})}\ge \abs{B}-3+6$.
  By applying Lemma~\ref{cor-red} with $X=\{x_1,u\}$, $Y=\{x,x^*,y,y_1\}$, and $s=3$, we  obtain a contradiction. So $w_1\ne y^*$. By symmetry, $w_1\ne x^*$. Thus \ref{7-8} holds.
\end{proof}

\begin{lemma}
	\label{B1}
	$|B|=3$. 
\end{lemma}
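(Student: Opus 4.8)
Suppose for contradiction that $|B|\ge 4$. Since $A$ is usable and $|A|=2$, the set $A$ is the vertex set of an edge of $\mathbf B$, and $\mathbf B$ — being a cycle of length at least $4$ — contains an edge $xy$ with $x,y\notin A$. I would apply Lemma~\ref{property} to $xy$, obtaining $S=\{x_1,x_2,u,y_1,y_2\}\subseteq V-B$, the boundary vertices $x^*,y^*$, and a boundary vertex $w_1\in(N(\{x_2,u,y_1\})\cap B)-\{x,y\}$. By Lemma~\ref{property}\ref{7-8}, $w_1\notin\{x^*,y^*\}$, so $x,y,x^*,y^*,w_1$ are five distinct vertices of $B$; in particular $|B|\ge 5$, which already settles the case $|B|=4$. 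It remains to rule out $|B|\ge 5$.

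So assume $|B|\ge5$. The structural core is Lemma~\ref{property}\ref{su}: since $d(x)=d(y)=5$ and $G$ is a near triangulation, the inner faces at $x$ are exactly the triangles $xx^{*}x_{1},\ xx_{1}x_{2},\ xx_{2}u,\ xuy$ and those at $y$ are $xuy,\ yuy_{1},\ yy_{1}y_{2},\ yy_{2}y^{*}$; hence $x^{*}x_{1}x_{2}uy_{1}y_{2}y^{*}$ is a path of $G$ (induced, by Lemma~\ref{property}\ref{7-7},\ref{7-6}). Consequently, deleting $\{x,y\}$ replaces the boundary arc $x^{*}xyy^{*}$ by $x^{*}x_{1}x_{2}uy_{1}y_{2}y^{*}$, so $|B(G-\{x,y\})|=|B|+3$ and $x_{1},x_{2},u,y_{1},y_{2}$ are all exposed. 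The plan is then to delete a suitable pair $X\subseteq S$ (with $G[X\cup\{x,y\}]$ connected), collect $x$ and $y$, and cascade — collecting further exposed interior vertices once their removed neighbours bring their degree down to $3$, and invoking Lemma~\ref{lem-a} when the cascade stays interior. The degree bounds $d(x_{2}),d(u),d(y_{1})\ge5$ of Lemma~\ref{property}\ref{xuy}, the non-adjacencies \ref{indu-},\ref{7-6}, and the estimates \ref{7-4} control how many extra interior vertices are exposed; the target is a pair $X,Y$ with $Y$ collectable in $G-X$, $G[X\cup Y]$ connected, $(X\cup Y)\cap B\ne\emptyset$, and $|B(G-(X\cup Y))|\ge|B|+s$ with $s+|Y|\ge 3|X|$, contradicting Lemma~\ref{cor-red}. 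Equivalently one can work with the separating cycle $C$ obtained by rerouting $\mathbf B$ along $x^{*}x_{1}x_{2}uy_{1}y_{2}y^{*}$, so that $V(\ext_G(C))=\{x,y\}$, and feed $C$ into Lemma~\ref{cor-reducible} whenever $\inn(C)\neq\emptyset$.

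The delicate part, which I expect to be the main obstacle, is the ``thin'' configuration in which $\inn(C)$ is empty (or nearly so) and $d(x_{2})=d(u)=d(y_{1})=5$, where the crude exposure count falls just short. Here one applies Euler's formula to $\inn[C]$ together with Lemmas~\ref{d4}, \ref{deg<6}, and~\ref{d=5} to force $|B|=5$ and pin $G$ down to an essentially unique $10$-vertex graph; the exact position of $w_{1}$ (usable precisely because $w_{1}\notin\{x^{*},y^{*}\}$ and $w_1\in N(\{x_2,u,y_1\})$) determines the remaining adjacencies, and one then exhibits explicitly a $(3,A)$-degenerate induced subgraph on $9$ vertices, giving $f(G;A)\ge 9>35/4=\partial(G)$ — a contradiction, since $\tau(G)=0$ by Lemma~\ref{lem-nospecialcycle}. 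The intermediate cases, in which the belt vertices have extra neighbours forming separating triangles or $4$-cycles, are disposed of by the analysis of Section~\ref{sec:separating} (Lemmas~\ref{sep-triangle} and~\ref{C4septri}) applied to those cycles, each time yielding either a forbidden $K_{5}$, a subgraph isomorphic to one of the graphs in $\mathcal Q$ (contradicting Lemma~\ref{lem-nospecialcycle}), or a reduction contradicting Lemma~\ref{cor-red} or Lemma~\ref{lem-a}. The bulk of the work is verifying that these subcases are exhaustive and that each one closes.
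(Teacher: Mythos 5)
Your first step is exactly the paper's: apply Lemma~\ref{property} to an edge $xy\in E(\mathbf B-A)$ and use \ref{7-8} to get five distinct boundary vertices $x,y,x^*,y^*,w_1$, hence $\abs{B}\ge 5$. But from that point on your proposal is a plan rather than a proof, and it misses the two steps that actually close the case $\abs{B}\ge 5$. The paper first proves the claim $N^{\circ}(u)=\{x_2,y_1\}$: if $u$ had a third interior neighbour, then (taking $y^*\notin A$, say, and applying Lemma~\ref{property}\ref{7-7} to the edge $yy^*$ to see that $u$ is non-adjacent to $N^\circ(y^*)$) deleting $\{u,y_2\}$ and collecting $y,x,y^*$ exposes six vertices, contradicting Lemma~\ref{cor-red}. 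Since $d(u)\ge 5$, this forces $u$ to have a boundary neighbour $z\notin\{x,y\}$. The second, decisive step is a global extremal argument: among all choices of an edge $e\in E(\mathbf B-A)$ and such a boundary neighbour $z$ of the corresponding $u$, choose one minimizing the length of the boundary path $P(e,z)$ avoiding $A$; the edge $e'=yy^*$ then yields a strictly shorter such path, a contradiction.

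The gap in your write-up is that you mischaracterize the hard case. The obstruction to closing $\abs{B}\ge5$ by local exposure counting around $xy$ is not a ``thin'' configuration with $\inn(C)=\emptyset$ that Euler's formula pins down to a $10$-vertex graph; it is precisely the configuration the paper isolates, in which $u$ (a vertex with $d(u)\ge 5$ and only two interior neighbours) is adjacent to a boundary vertex $z$ that may lie far from $x$ and $y$ on $\mathbf B$. In that situation deleting vertices of $S$ exposes too few new interior vertices for Lemma~\ref{cor-red}, and no rerouted separating cycle $C$ with $V(\ext(C))=\{x,y\}$ exists, because the edge $uz$ crosses out of the ``belt''. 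No amount of local analysis at a single edge $xy$ resolves this; one needs the minimality argument over all pairs $(e,z)$, which is the content you would still have to supply.
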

 \begin{proof}  
For an edge $e=xy \in E(\mathbf{B}-A)$, let $x^*, x_1, x_2, u, y_1, y_2, y^*$ be as in Lemma~\ref{property}.
Suppose that $\abs{B}\ge 4$. Then $x^*\neq y^*$.
Lemma~\ref{property}\ref{7-8} implies that $B$ has a vertex other than $x$, $y$, $x^*$, and $y^*$. So, $\abs{B}\ge 5$.

We claim that $N^{\circ}(u) = \{x_2, y_1\}$.
Suppose not. 
By Lemma~\ref{cutvertex}, $|A|=2$, so at least one vertex of $\{x^*,y^*\}$, say, $y^*$ is not in $A$.
By Lemma~\ref{property}\ref{7-7} applied to $yy*$, 
we deduce that $u$ is non-adjacent to vertices in $N^\circ(y^*)$.
Thus deleting $u$, $y_2$ and collecting $y$, $x$, $y^*$ exposes at least 
$6$ vertices and so $\abs{B(G-\{u,y_2,y,x,y^*\})}\ge\abs{B}-3+6$. 
By applying Lemma~\ref{cor-red} with $X=\{u, y_2\}$, $Y=\{y,x,y^*\}$, and $s=3$, we obtain a contradiction. 
So $N^{\circ}(u) = \{x_2, y_1\}$.

Since $d(u) \ge 5$ by Lemma~\ref{property}\ref{xuy}, $u$ has at least one boundary neighbour $z \ne x,y$. Let $\mathbf B(x,z)$ be the boundary path from $x$ to $z$ not containing $y$, and $\mathbf B(y,z)$ be the boundary path from $y$ to $z$ not containing $x$. So $\mathbf B(x,z)$ and $\mathbf B(y,z)$ have only one vertex in common, namely $z$. One of $\mathbf B(x,z)$, $\mathbf B(y,z)$ has no internal vertex in $A$. We denote this path by $P(e, z)$. We choose $e=xy$ and $z$ so that $P(e,z)$ is shortest.  Assume $P(e,z) = \mathbf B(y,z)$. Let $e'=yy^*$. Then $e' \in E(\mathbf B-A)$. Let $y_2$ be the common neighbour of $y$ and $y^*$ and let $z' \ne y, y^*$ be a boundary neighbour of $y_2$.
Then $P(e', z')$ is a proper subpath of $P(e,z)$, and hence is shorter. This contradicts our choice of $e$ and~$z$.
\end{proof}

 \section{The final contradiction}\label{sec:proof}	
 In this section we complete the proof of Theorem~\ref{main}. First we prove a lemma.
 
\begin{figure}
	\centering
	\begin{tikzpicture}
	  \tikzstyle{every node}=[circle,draw,fill=black!50,inner sep=0pt,minimum width=4pt]
	  \draw (-30:2.5) node [label=right:$a'$] (a') {}
	  .. controls (30:1.2).. 
	  (90:1.3) node[label=$v$] (v){}
	  .. controls (150:1.2).. 
	  (210:2.5) node[label=left:$a$](a){}--cycle;
	  \draw (a)--(-.7,0) node[label=left:$x$](x){} -- (0,.7) node[label=below:$y$] (y){}
	  --(.7,0)node[label=right:$z$](z){}--(a');
	  \draw (v)--(x);
	  \draw (v)--(y);
	  \draw (v)--(z);
	  \scoped[on background layer] \fill [black,opacity=0.2](x.center)--(y.center)--(z.center)--(a'.center)--(a.center)--cycle;
	\end{tikzpicture}
	\caption{The situation of Lemma~\ref{fl}.}\label{fig:fl-setup}
  \end{figure}
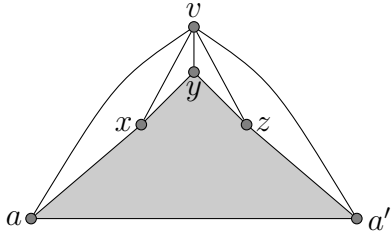
 
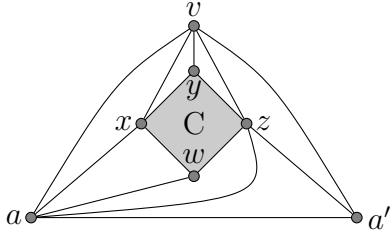
\begin{figure}
  \centering
  \begin{tikzpicture}
    \tikzstyle{every node}=[circle,draw,fill=black!50,inner sep=0pt,minimum width=4pt]
    \draw (-30:2.5) node [label=right:$a'$] (a') {}
    .. controls (30:1.2).. 
    (90:1.3) node[label=$v$] (v){}
    .. controls (150:1.2).. 
    (210:2.5) node[label=left:$a$](a){}--cycle;
    \draw (a)--(-.7,0) node[label=left:$x$](x){} -- (0,.7) node[label=below:$y$] (y){}
    --(.7,0)node[label=right:$z$](z){}--(a');
    \draw (x)--(0,-.7) node[label=$w$](w){}--(z) .. controls (1,-1) .. (a)--(w);
    \draw (v)--(x);
    \draw (v)--(y);
    \draw (v)--(z);
    \node [draw=none,fill=none] at (0,0) {C};
    \scoped[on background layer] \fill [black,opacity=0.2](x.center)--(y.center)--(z.center)--(w.center)--cycle;
  \end{tikzpicture}
  \caption{An illustration of the proof of Lemma~\ref{fl}\ref{za}.}\label{fig:fl}
\end{figure}

\begin{lemma}\label{fl}
If $B=\{a,a',v\}$ and $axyza'$ is a path in $G[N(v)]$ (see Figure~\ref{fig:fl-setup}), 
then the following hold. 
\begin{enum}
	\item\label{xz}$x$ is non-adjacent to $z$.
	\item \label{ya} $y$ is adjacent to neither $a$ nor $a'$. 
	\item \label{za} $z$ is non-adjacent to $a$ and $x$ is non-adjacent to $a'$.
	\item \label{xyz} $d(x), d(y), d(z)\ge 5$.
	\item \label{XYZ}  $|N^{\circ}(\{x,y,z\})|\le 4$.
	\item \label{xzw} $x$ and $z$ have a common neighbour $w \notin \{y, v\}$.   
	\item \label{xcapz} $N(x)\cap N(z)=\{v,w,y\}$.
	\end{enum}
\end{lemma}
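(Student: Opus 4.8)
The plan is to prove (a)--(g) in this order, each part using the earlier ones. Everything rests on the fact that, by Lemmas~\ref{B1} and~\ref{d=5}, $v$ has degree exactly $5$, so the link of $v$ is precisely the $5$-cycle $a\,x\,y\,z\,a'$, the inner faces at $v$ are the triangles $vax$, $vxy$, $vyz$, $vza'$, and $x,y,z$ are interior vertices; in particular $G$, being a near plane triangulation whose outer face is a triangle, is a triangulation, so the link of every interior vertex is a cycle. The recurring tools are the separating-cycle lemmas~\ref{triangle}, \ref{sep-triangle}, \ref{C4septri}, the reduction lemmas~\ref{cor-red} and~\ref{lem-a}, and Lemma~\ref{lem-nospecialcycle}.

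For (a), suppose $x\sim z$. Since the two faces at the edge $vx$ are $vax$ and $vxy$, and since $d(y)\ge 4$ by Lemma~\ref{d4}, neither $vxzv$ nor $xyzx$ is a face, so both are separating triangles. The triangle $xyzx$ has no vertex in $B$, has $v$ in its exterior, and $y$ has no neighbour in $V(\ext(xyzx))$ apart from $v$ (its remaining neighbours are $x,z$ and vertices of $\inn(xyzx)$), so $\|y,V(\ext(xyzx))\|=1$. By Lemma~\ref{sep-triangle}, $x$ and $z$ therefore have at least $4$ neighbours in $V(\ext(xyzx))$ each, and with Lemma~\ref{triangle}(c) this forces $d(x),d(z)\ge 7$. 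Now delete $v$, so the boundary becomes the $5$-cycle $a\,x\,y\,z\,a'$: if $d(y)=4$, the remaining neighbour $p$ of $y$ is interior, collecting $y$ (now of degree $3$) after deleting $v$ exposes $p$, and $|B(G-v-y)|=5=|B|+2$ contradicts Lemma~\ref{cor-red} with $X=\{v\}$, $Y=\{y\}$, $s=2$; if $d(y)\ge5$ one uses the large degrees of $x$ and $z$ to locate a further reducible configuration and again contradicts Lemma~\ref{cor-red}. Hence $x\not\sim z$. Part (b) is similar: $y\sim a$ makes $axyx$ a separating triangle (else $d(x)=3$, contradicting Lemma~\ref{d4}) with $a\in B$, and $y\sim a'$ makes $a'zya'$ one; in each case, after deleting $v$, an appropriate collection exposes one vertex too many for Lemma~\ref{cor-red}. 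Part (c) is handled the same way, now using (a) and (b) to fix the local structure around $x$ and $z$.

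For (d), since by (a) $x\not\sim z$ and by (c) $x\not\sim a'$, if $d(x)=4$ then the fourth neighbour $p$ of $x$ (besides $v,a,y$) is interior; after deleting $v$, the vertex $x$ has degree $3$ on the $5$-cycle $a\,x\,y\,z\,a'$, collecting $x$ exposes $p$, and $|B(G-v-x)|=5=|B|+2$ contradicts Lemma~\ref{cor-red} with $X=\{v\}$, $Y=\{x\}$, $s=2$; the cases $d(z)=4$ and $d(y)=4$ (for the latter using (b)) are identical. For (e), assume $|N^{\circ}(\{x,y,z\})|\ge 5$. By (a)--(c), after deleting $v$ the vertices $x,y,z$ lie on the induced boundary pentagon $a\,x\,y\,z\,a'$; delete $v$ together with one of $x,y,z$ and collect the remaining two wherever their degrees allow, peeling them off one at a time, so that every vertex of $N^{\circ}(\{x,y,z\})$ gets exposed. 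Then Lemma~\ref{cor-red} (or Lemma~\ref{lem-a} when the collected set is large enough) yields $|N^{\circ}(\{x,y,z\})|\le4$, a contradiction.

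Finally, (f) and (g) amount to determining $N(x)\cap N(z)$, which by hypothesis and (a)--(c) equals $\{v,y\}\cup W$ for a set $W$ of interior common neighbours. Using that $G$ is a triangulation, the links of $x$, $y$, $z$ are cycles pairwise sharing the arc $v\,y$; combining this with (d) and (e) one shows $W\neq\emptyset$ (otherwise the $4$-cycle $x\,v\,z\,y\,x$ has empty interior, and deleting $\{v,y\}$ and collecting along the path of $y$'s interior neighbours exposes too much for Lemma~\ref{cor-red}), which is (f). To get $|W|\le1$, and hence (g), note that if $|W|\ge2$ the common neighbours of the non-adjacent pair $x,z$ produce nested separating $4$-cycles with no boundary vertex, and applying Lemma~\ref{C4septri} to the innermost one together with a reduction deleting a low-degree interior common neighbour gives a contradiction; so $N(x)\cap N(z)=\{v,w,y\}$. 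I expect (e)--(g) to be the crux: the work is to choose the connected delete-and-collect set so that Lemma~\ref{cor-red}'s inequality (each deletion costs $3$, each collected vertex earns $1$) is genuinely violated, and to deal with the sub-cases where the relevant separating cycle passes through $a$ or $a'$, where Lemma~\ref{sep-triangle} is unavailable and one argues via Lemma~\ref{triangle} and direct counting.
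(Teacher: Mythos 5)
Your overall strategy (separating triangles/cycles plus delete-and-collect reductions via Lemmas~\ref{cor-red}, \ref{sep-triangle}, \ref{C4septri}) is the paper's strategy, and your part \ref{xyz} is correct and even a little cleaner than the paper's (delete $v$, collect the degree-$4$ vertex, apply Lemma~\ref{cor-red} with $s=2$). But as written the proposal has genuine gaps. First, parts \ref{ya} and \ref{za} are only asserted: ``an appropriate collection exposes one vertex too many'' and ``handled the same way'' are placeholders, and \ref{za} is in fact the heart of the lemma --- the paper needs the common neighbour $w\in N(a)\cap N(x)\cap N(z)$, the separating $4$-cycle $xyzwx$, the reduction Lemma~\ref{cor-reducible} (not \ref{cor-red}) with several different choices of $X,Y$, the bound $|N(\{u,u'\})\cap V(I)|\le 3$ for edges of the cycle, a $K_{3,3}$-subdivision planarity argument, and Lemma~\ref{C4septri}; none of this is recoverable from your sketch. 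Second, in \ref{xz} and \ref{XYZ} your delete-and-collect moves only work when the vertices you want to collect end up with residual degree at most $3$; when $d(y)$ is large (your ``if $d(y)\ge5$'' case in \ref{xz}, and the collectability of $x,z$ after deleting $\{v,y\}$ in \ref{XYZ}) the collection simply fails, and ``wherever their degrees allow'' leaves exactly this case open. The paper resolves it by splitting on $d(y)\le 6$ versus $d(y)\ge 7$ and swapping which vertices are deleted and which are collected ($X=\{x,z\},Y=\{v,y\}$ versus $X=\{x,y\},Y=\{v\}$); some such case split is unavoidable.

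Third, your argument for \ref{xzw} rests on a claim that is vacuous: the $4$-cycle $xvzy$ \emph{always} has empty interior, since $vy\in E(G)$ and $vxy$, $vyz$ are faces of the triangulation, so its emptiness says nothing about the absence of a common neighbour of $x$ and $z$ on the other side of the path $xyz$. The paper's proof of \ref{xzw} instead deduces $d(x)=d(y)=d(z)=5$, identifies the four interior neighbours $x_1,x_2,z_1,z_2$, shows the region bounded by $ax_1x_2z_1z_2a'$ is empty so that $|V(G)|=10$, and then beats $\partial(G)$ directly using Observation~\ref{ob0}; your sketched reduction does not reach a verifiable contradiction. Likewise \ref{xcapz} is only gestured at (``nested separating $4$-cycles'', ``deleting a low-degree interior common neighbour''), whereas the actual argument chooses $w$ maximizing $|V(\inn(xyzwx))|$, proves $y\not\sim w'$ via Lemma~\ref{sep-triangle}, locates the two neighbours $y_1,y_2$ of $y$ inside, and contradicts Lemma~\ref{C4septri} with $|B(\inn(C))|=3$ and $|V(\inn(C))|\ge 3$. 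So the proposal correctly identifies the toolkit but does not yet constitute a proof of \ref{xz}, \ref{ya}, \ref{za}, \ref{XYZ}, \ref{xzw}, or \ref{xcapz}.
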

\begin{proof}
	\ref{xz} Suppose $x$ is adjacent to $z$.
	As $K_{5}\nsubseteq G$, $x$ is non-adjacent to $a'$ or $z$ is non-adjacent to $a$; by symmetry, assume $x$ is non-adjacent to $a'$.
	Since $d(y) \ge 4$, $T:=xyzx$ is a separating triangle.  
	By Lemma~\ref{triangle}, $|V(\inn(T))| \ge 3$.
	Since $\|y, V(\ext(T))\|=1$, Lemma~\ref{sep-triangle} implies  $\|x, V(\ext(T))\|
	\ge 4$, 
	and so  $|N^{\circ}(x) \cap V(\ext(T))|
  \ge 2$. 
	
  If $d(y)\le6$, then deleting $x$, $z$ and collecting $v$, $y$ exposes at least $5$ vertices from $B(\inn(T))$ and $N^{\circ}(x)\cap V(\ext(T))$
  and so $\abs{B(G-\{x,z,v,y\})}\ge \abs{B}-1+5$. By applying Lemma~\ref{cor-red} with $X=\{x,z\}$, $Y=\{v,y\}$, and $s=4$, we obtain a contradiction. Therefore,
 $d(y)\ge7$. Then $|N^{\circ}(y)\cap V(\inn(T))|\ge 4$ and so 
 deleting $x$, $y$ and collecting $v$ exposes at least $7$ vertices,
 and $\abs{B(G-\{x,y,v\})}\ge \abs{B}-1+7$.
 By applying Lemma~\ref{cor-red} with $X=\{x,y\}$, $Y=\{v\}$, and $s=6$, we obtain a contradiction. So \ref{xz} holds.  

\medskip\noindent
	\ref{ya} Suppose $y$ is adjacent to $a$. Then  $T:=axya$ is a separating triangle, because $d(x) \ge 4$ and the other triangles incident with $x$ are facial. 
	As $d(a) \le 5$ by Lemma~\ref{deg<6}, $a$ has a unique neighbour $w$ in $\inn(T)$.
	As $d(w) \ge 4$, $T':=xwyx$ is a separating triangle. Now $\|w,V(\ext(T'))\|=1$, and $\|x,V(\ext(T'))\|=2$,   
	contrary to Lemma~\ref{sep-triangle}. Thus $y$ is non-adjacent to $a$. By symmetry, $y$ is non-adjacent to $a'$.  So \ref{ya} holds.
	
	\medskip\noindent 
	\ref{za} Suppose that $z$ is adjacent to $a$.
	By \ref{xz}, $z$ is non-adjacent to $x$. As $d(x) \ge 4$ and $d(a) \le 5$ by Lemmas~\ref{d4} and \ref{deg<6},  there is $w\in (N(a)\cap N(x)\cap N(z))-\{v\}$, and $xawx$, $wazw$, $aza'a$ are all facial triangles. (See Figure~\ref{fig:fl}.) By \ref{ya}, $y\ne w$. 
	Since $d(y)\ge 4$ by Lemma~\ref{d4}, $C:=xyzwx$ is a separating cycle of length $4$. Let $I=\inn(C)$.  
	Then $V=B\cupdot V(C)\cupdot V(I)$, 
	\begin{enum*}
	\item\label{e1} $\|x,V(\ext(C))\|=2$, \item\label{e2} $\|y,V(\ext(C))\|=1$, \item\label{e3} $\|z,V(\ext(C))\|=3$, and \item\label{e4}$\|w,V(\ext(C))\|=1$.
	\end{enum*}

 If $w$ is adjacent to $y$, then we apply Lemma~\ref{cor-reducible} with $C$, $X=\{w\}$, and $Y=\emptyset$. As $y$ is adjacent to $w$, $A':=\{x,y,z\}$ is usable in $G_1:=\inn[C]-w$, and by (i--iii), $A'$ is collectable in $G'_2:=\ext[C]-w$.  As $\abs{V(G_2)}=\abs{B}=3$, $\tau(G_2)=0$.  This contradicts Lemma~\ref{cor-reducible}.
 
	So  using \ref{xz}, $C$ is chordless and $x$ has at least one neighbour in $\inn(C)$.

By Lemma~\ref{C4septri}, either $|B(I)|\ge 4$ or $\abs{V(I)}\le 2$ and every vertex in $I$ has degree $4$ in $G$.
If $\abs{V(I)} \le 2$ and every vertex in $I$ has degree $4$ in $G$, then  $V-\{x\}$ is $A$-good as we can collect $V(I),y,w,z,v,a',a$.
Then $f(G;A)\ge \abs{V(G)}-1 \ge \partial(G)$, a contradiction.  
Therefore  $|B(I)|\ge 4$. 

If there is an edge $uu'\in E(C)$ with $|N(\{u,u'\})\cap V(I)|\ge4$,
then we apply Lemma~\ref{cor-reducible} with $C$, $X=\{u,u'\}$ and $Y=\emptyset$.  Now $A':=V(C)-\{u,u'\}$ is usable in $G_1:=\inn[C]-\{u,u'\}$, $A'$ is collectable in $G_2':=\ext[C]-\{u,u'\}$, $|B_1|\ge6$, and $B_2=B$. As $G_2=\mathbf B$, $\tau(G_2)=0$. This contradicts Lemma~\ref{cor-reducible}.
So 
$|N(\{u,u'\})\cap V(I)|\le3$ for all edges $uu'\in E(C)$ and in particular,
$\|u,V(I)\|\le3$ for all $u\in V(C)$. This implies $d(y)\le 6$.

If $|N(\{x,y,z\})\cap V(I)|\ge4$, then we apply  Lemma~\ref{cor-reducible} with $C$, $X=\{x,z\}$ and $Y=\{v,y\}$.  Then $Y$ is collectable in $G-X$, $A':=\{w\}$ is usable in $G_1:=\inn[C]-\{x,y,z\}$, $A'$ is collectable in $G_2':=\ext[C]-\{x,y,z\}$, $|B_1|\ge5$, and $B_2=B-\{v\}$. 
As $(X \cup Y)\cap B\neq\emptyset$, this contradicts Lemma~\ref{cor-reducible}.

Therefore $|N(\{x,y,z\})\cap V(I)|\le 3$. 
Since $\abs{B(I)}\ge 4$, there exists a vertex $u$ in $B(I)-N(\{x,y,z\})$. 
Then $w$ is the only neighbour of $u$ in $C$.

Because $G$ is a plane triangulation and $d(u)\ge 4$, $w$ is adjacent to $u$.
Since $u$ is non-adjacent to $x$, $y$, $z$, we deduce that 
$B(I)\cap N(w)$ contains $u$ and at least two of the neighbours of $u$.
Since $\| w, V(I)\|\le 3$, we deduce that $\|w,V(I)\|=3$.
Since $|N(\{x,w\})\cap V(I)|\le 3$, 
all neighbours of $x$ in $I$ are adjacent to $w$. 
Similarly all neighbours of $z$ in $I$ are adjacent to $w$.
Since $\abs{B(I)}\ge 4$, there is a vertex $t$ in $B(I)$ non-adjacent to $w$.
Then $t$ is non-adjacent to $x$ and $z$.
Therefore $t$ is adjacent to $y$.
By the same argument, $\| y, V(I)\| = 3$ and every neighbour of $x$ or $z$ in $I$
is adjacent to $y$.
Thus, every vertex in $N(\{x,z\})\cap V(I)$ is adjacent to both $y$ and $w$.

If $\|x,V(I)\|\ge 2$, then $x$, $y$, $w$, and their common neighbours in $I$ together with $a$ are the branch vertices of a $K_{3,3}$-subdivision, using the path $avy$. So $G$ is nonplanar, a contradiction. Thus, $\|x,V(I)\|\le 1$ and similarly $\|z,V(I)\|\le 1$.
This means that $d(x)\le 5$ and $B(\inn[C]-\{x,y,w\})=B(I)\cup \{z\}$.

We apply Lemma~\ref{cor-reducible} with $C$, $X=\{w,y\}$ and $Y=\{x\}$.   Then $Y$ is collectable in $G-X$ and $A'=\{z\}$ is usable in $G_1:=\inn[C]-\{w,x,y\}$, $A'$ is collectable in $G'_2:=\ext[C]-\{w,x,y\}$, $|B_1|=\abs{B(I)\cup\{z\}}\ge5$, $B_2=B$, and $G_2=\mathbf B$. Thus $\tau(G_2)=0$ and this contradicts Lemma~\ref{cor-reducible}.  
Hence  $z$ is non-adjacent to $a$. By symmetry, $x$ is non-adjacent to $a'$. Thus \ref{za} holds.

\medskip\noindent 
\ref{xyz}	Suppose $d(u)\le4$ for some $u\in\{x,y,z\}$. 
  By Lemma~\ref{d4}, $d(u)=4$. Let $u':=y$ if $u\ne y$,  $u':=x$ otherwise.
  Then, deleting $u'$ and collecting $u$, $v$ exposes at least $2$ vertices in $N^\circ(\{u,u'\})$ by \ref{xz} and \ref{za} and so $\abs{B(G-\{u,u',v\})}\ge \abs{B}-1+2$.
  By applying Lemma~\ref{cor-red} with $X=\{u'\}$, $Y=\{u,v\}$, and $s=1$, we obtain a contradiction. So \ref{xyz} holds.

\medskip\noindent 
\ref{XYZ} Suppose $|N^{\circ}(\{x,y,z\})|\ge 5$. If $d(y)\le6$, then 
deleting $x$, $z$ and collecting $v$, $y$ exposes at least $5$ vertices and so 
$\abs{B(G-\{x,z,v,y\})}\ge \abs{B}-1+5$. 
By applying Lemma~\ref{cor-red} with $X=\{x,z\}$, $Y=\{v,y\}$, and $s=4$, 
we obtain a contradiction.   
	Thus $d(y)\ge7$. Then either $|N^{\circ}(\{x,y\})-\{z\}|\ge5$ or $|N^{\circ}(\{z,y\})-\{x\}|\ge5$. We may assume by symmetry that $|N^{\circ}(\{x,y\})-\{z\}|\ge5$. 
  Then deleting $x$, $y$ and collecting $v$ exposes at least $6$ vertices and so $\abs{B(G-\{x,y,v\})}\ge \abs{B}-1+6$.
  By applying Lemma~\ref{cor-red} with $X=\{x,y\}$, $Y=\{v\}$, and $s=5$, 
	we obtain a contradiction.	So \ref{XYZ} holds.

\begin{figure}
	\centering
	\begin{tikzpicture}
	\tikzstyle{every node}=[circle,draw,fill=black!50,inner sep=0pt,minimum width=4pt]
	\node [label=$v$] at (0,2) (v) {};
	\node [label=left:$a$] at (-3,-2) (a) {};
	\node [label=right:$a'$] at (3,-2) (a') {};	
	\node [label=left:$x$] at (-1,0) (x) {};
	\node [label=below:$y$] at (0,0) (y) {};
	\node [label=right:$z$] at (1,0) (z) {};
	\node [label=below:$x_1$] at (-1.2,-1) (x1) {};
	\node [label=below:$x_2$] at (-.5,-1) (x2) {};
	\node [label=below:$z_1$] at (.5,-1) (z1) {};
	\node [label=below:$z_2$] at (1.2,-1) (z2) {};
    \draw (v)--(a)--(a')--(v);
    \draw (a)--(x)--(v)--(y)--(z)--(v);
    \draw (x)--(y);
    \draw (z)--(a');
    \draw (a)--(x1)--(x)--(x2)--(y)--(z1)--(z)--(z2)--(a');
    \draw (x1)--(x2);
    \draw (z1)--(z2);	
    \draw (x2)--(z1);
    \scoped[on background layer]
    \fill [black,opacity=.2,pattern=north east lines](a.center)--(x1.center)--(x2.center)
    --(z1.center)--(z2.center)--(a'.center)--cycle;
    \node [draw=none,fill=none] at (0,-1.5){$C^*$};
	\end{tikzpicture} 
	\caption{Proof of Lemma~\ref{fl}\ref{xzw}. There are no vertices in $\inn(C^*)$.}
	\label{figurecom.nei.ofxz} %
\end{figure}
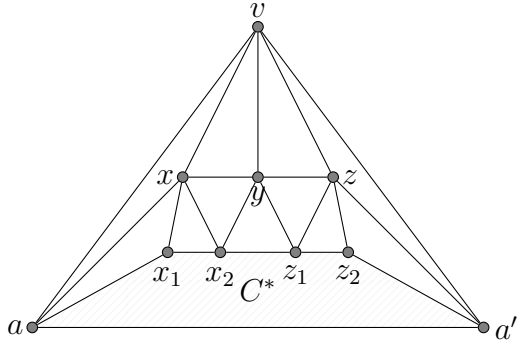

\medskip\noindent 
\ref{xzw}	Suppose $N(x)\cap N(z)=\{y,v\}$. By \ref{xyz}, $d(x), d(z) \ge 5$. By \ref{XYZ}, $|N^{\circ}(\{x,z\})-\{y\}|\le 4$. 
By \ref{za}, $z$ is non-adjacent to $a$
and $x$ is non-adjacent to $a'$
and by \ref{xz}, $x$ is non-adjacent to $z$.
So each of $x$ and $z$ have exactly two neighbours in $\inn(axyza'a)$ and $d(x)=d(z)=5$. Let $x_1$, $x_2$ be those neighbours of $x$
and $z_1, z_2$ be those two neighbours of $z$.
We may assume that $x_1x_2yz_1z_2$ is a path in $G$
by swapping labels of $x_1$ and $x_2$ and swapping labels of $z_1$ and $z_2$ if necessary. 
By \ref{XYZ}, 
we have $N^{\circ}(y)-\{x,z\}\subseteq \{x_1,x_2,z_1,z_2\}$. As $d(x_2)\ge 4$, $y$ is not adjacent to $x_1$ because otherwise $x_1x_2yx_1$ is a separating triangle, that will make a new interior  neighbour of $y$ by Lemma~\ref{triangle}(c), contrary to \ref{XYZ}.
By symmetry, $y$ is not adjacent to $z_2$. So $x_2$ is adjacent to $z_1$ as $G$ is a plane triangulation. 
Therefore $d(y)=5$.

Let $C^*:=ax_1x_2z_1z_2a'a$. Suppose that  $w\in N(\{x_1,x_2,z_1,z_2\})\cap V(\inn (C^*))$. Then by symmetry, we may assume $w$ is adjacent to $x_1$ or $x_2$. 
Deleting $x_1$, $x_2$ and collecting $x$, $y$, $v$, $z$ exposes $w$, $z_1$, $z_2$ and so $\abs{B(G-\{x_1,x_2,x,y,v,z\})}\ge \abs{B}-1+3$.
By applying Lemma~\ref{cor-red} with $X=\{x_1,x_2\}$, $Y=\{x,y,v,z\}$, and $s=2$,  
we obtain a contradiction. Thus $N(\{x_1,x_2,z_1,z_2\})\cap V(\inn (C^*)) = \emptyset$ and therefore  $\abs{G}=10$. See Figure~\ref{figurecom.nei.ofxz}.

By Observation~\ref{ob0} applied to $\inn[C^*]$, there is a vertex $w\in\{x_1,x_2,y_1,y_2\}$ having degree at most $2$ in $\inn[C^*]$.
By symmetry, we may assume that $w=x_i$ for some $i\in \{1,2\}$.
Since $d(x_i)\le 4$, 
after deleting $x_{3-i}$, we can collect $x_i$, $x$, $y$, $v$, $z$, resulting  in an outerplanar graph, which can be collected by Observation~\ref{ob0}.
So, $f(G;A)\ge9\ge \partial(G)$, a contradiction. So \ref{xzw} holds.

\medskip\noindent 
\ref{xcapz} Suppose there is $w'\in N(x)\cap N(z)-\{v,w,y\}$. 
Let $C:=xyzwx$.
We may assume that $w$ is chosen to maximize $\abs{V(\inn(C))}$.
So $w'$ is in $V(\inn(C))$ and together with~$\ref{xz}$, we deduce that $C$ is an induced cycle.
 
 We claim that $y$ is non-adjacent to $w'$.
 Suppose not.  As $d(y)\ge 5$ by \ref{xyz}, $xw'yx$ or $zw'yz$ is a separating triangle. By symmetry, we may assume $xw'yx$ is a separating triangle. Thus $|N(\{x,y\})\cap V(\inn (xw'yx))|\ge 2$ by Lemma~\ref{triangle}(d). Because $G$ is a plane triangulation, by \ref{XYZ}, $w$ is adjacent to $w'$ and $xww'x$, $zww'z$, and $yzw'y$ are facial triangles. Thus $\|y,V(\ext(xw'yx))\|=\|w',V(\ext(xw'yx))\|=2$, contrary to Lemma~\ref{sep-triangle}.  This proves the claim that $y$ is non-adjacent to $w'$.
 
 Therefore
 $\|y,V(\inn(xyzw'x))\|=2$ by \ref{xyz} and \ref{XYZ}.  Let $y_1$, $y_2$ be two neighbours of $y$ in $\inn(xyzw'x)$ such that $xy_1y_2z$ is a path in $G$. Because $G$ is a	plane triangulation, by \ref{XYZ}, $w'$ is adjacent to both $y_1$ and $y_2$
 and $\inn(xw'zwx)$ has no vertex. Then 
 $C$ is a separating induced cycle of length $4$ and $|B(\inn(C))|=3$, contrary to Lemma~\ref{C4septri}.	So \ref{xcapz} holds. 
\end{proof}

\begin{proof}[Proof of Theorem~\ref{main}] 
Let $(G;A)$ be an extreme counterexample. 
Then $G$ is a near plane triangulation.
Let $B=B(G)$ and $\mathbf B=\mathbf B(G)$.
By Lemmas~\ref{cutvertex} and \ref{B1}, $\abs{B}=3$ and $\abs{A}=2$. Let $A=\{a,a'\}$ and $v\in B-A$. 
By Lemma~\ref{d=5}, $d(v)=5$. 
As $G$ is a plane triangulation, 
the neighbours of $v$ form a path $axyza'$.
By Lemma~\ref{fl}\ref{xcapz}, $x$ and $z$ have exactly one common neighbour $w$ in $G-v-y$.  
Then $C:=xyzwx$ is a cycle of length $4$. 
By symmetry and Lemma~\ref{fl}\ref{xyz}, we may assume that $d(x)\ge d(z)\ge5$.  
By Lemma~\ref{fl}\ref{XYZ}, 
\[
(d(x)-3)+(d(z)-3)-1 \le 
|N^\circ(\{x,y,z\})|
\le 4.
\]
Therefore  $d(z)= 5$ and $d(x)=5$ or $6$.

We claim that $y$ is non-adjacent to $w$. Suppose that $y$ is adjacent to $w$. 
By Lemma~\ref{fl}\ref{xyz}, $d(y)\ge 5$ and therefore 
at least one of $xywx$ and $yzwy$ is a separating triangle. If both of them are separating triangles, then  $|N(\{x,y\})\cap V(\inn (xywx))|\ge 2$ and $|N(\{y,z\})\cap V(\inn (yzwy))|\ge 2$, by Lemma~\ref{triangle}(d).  Therefore $|N^{\circ}(\{x,y,z\})|\ge 2+2+1=5$, contrary to Lemma~\ref{fl}\ref{XYZ}.
This means that exactly one of $xywx$ and $yzwy$ is a separating triangle.

Suppose $yzwy$ is a separating triangle.
Then $xywx$ is a facial triangle, and $z$ has a neighbour in $\inn(yzwy)$.  
As $d(z)=5$, $z$ has no neighbour in $\inn(axwza'a)$. Therefore, 
$w$ is adjacent to $a'$, and $wza'w$ is a facial triangle.  
Thus $\|y,V(\ext(yzwy))\|=\|z,V(\ext(yzwy))\|=2$, contrary to Lemma~\ref{sep-triangle}. 
So $yzwy$ is not a separating triangle.

Therefore $xywx$ is a separating triangle.
By Lemma~\ref{triangle}(d), $\inn(xywx)$ has at least two vertices in $N^\circ(\{x,y,z\})$.
By Lemma~\ref{fl}\ref{xyz}, 
$z$ has a neighbour in $\inn(axwza'a)$.
Then already we found four vertices in 
$N^\circ (\{x,y,z\})$.  
This means that $x$ has no neighbours in $\inn(axwza'a)$ by Lemma~\ref{fl}\ref{xzw}.
Hence $\|y,V(\ext(xywx))\|=\|x,V(\ext(xywx))\|=2$, contrary to Lemma~\ref{sep-triangle}. 
This completes the proof of the claim that $y$ is non-adjacent to $w$. 

Therefore $C$ is chordless by Lemma~\ref{fl}\ref{xz}. 
 By Lemma~\ref{fl}\ref{xyz},  $d(y)\ge 5$. Thus $C$ is a separating induced cycle of length $4$. 
  By Lemma~\ref{C4septri}, either $|B(\inn(C))|\ge 4$ or both $\abs{V(\inn(C))}\le 2$ and every vertex in $\inn(C)$ has degree $4$ in $G$.

If $|B(\inn(C))|\ge 4$, then  
deleting $w$, $y$ and collecting $z$, $v$, $x$ exposes at least $4$ vertices
and so $\abs{B(G-\{w,y,z,v,x\})}=\abs{B}-1+4$.
By applying Lemma~\ref{cor-red} with $X=\{w,y\}$, $Y=\{z,v,x\}$, and $s=3$, 
we obtain a contradiction.

Therefore  $1\le |V(\inn(C))| \le 2$ 
and every vertex in $\inn(C)$ has degree $4$ in $G$.   
Deleting $y$ and collecting all vertices in $\inn(C)$ and $z$, $v$, $x$
exposes $w$ and so $B(G-(\{y,z,v,x\}\cup V(\inn(C))))\ge \abs{B}-1+1$.
By applying Lemma~\ref{cor-red} with $X=\{y\}$, $Y=V(\inn(C))\cup\{z,v,x\}$, and $s=0$,  
we obtain a contradiction. 
	 \end{proof}  

	 \providecommand{\bysame}{\leavevmode\hbox to3em{\hrulefill}\thinspace}
	 \providecommand{\MR}{\relax\ifhmode\unskip\space\fi MR }
	 \providecommand{\MRhref}[2]{%
	   \href{http://www.ams.org/mathscinet-getitem?mr=#1}{#2}
	 }
	 \providecommand{\href}[2]{#2}


\begin{thebibliography}{10}
	 
	 \bibitem{AW1987}
	 Jin Akiyama and Mamoru Watanabe, \emph{Maximum induced forests of planar
	   graphs}, Graphs Combin. \textbf{3} (1987), no.~1, 201--202. \MR{1554352}
	 
	 \bibitem{Albertson1976}
	 Michael~O. Albertson, \emph{A lower bound for the independence number of a
	   planar graph}, J. Combinatorial Theory Ser. B \textbf{20} (1976), no.~1,
	   84--93. \MR{424599}
	 
	 \bibitem{AB}
	 Michael~O. Albertson and David~M. Berman, \emph{A conjecture on planar graphs},
	   Graph Theory and Related Topics (J.~A. Bondy and U.~S.~R. Murty, eds.),
	   Academic Press, 1979, p.~357.
	 
	 \bibitem{aks}
	 Noga Alon, Jeff Kahn, and Paul Seymour, \emph{Large induced degenerate
	   subgraphs}, Graphs Combin. \textbf{3} (1987), no.~3, 203--211. \MR{903609
	   (88i:05104)}
	 
	 \bibitem{Berge}
	 Claude Berge, \emph{Graphes et hypergraphes}, Dunod, Paris, 1970, Monographies
	   Universitaires de Math\'{e}matiques, No. 37. \MR{0357173}
	 
	 \bibitem{borodin1976}
	 Oleg~V. Borodin, \emph{The decomposition of graphs into degenerate subgraphs},
	   Diskret. Analiz. (1976), no.~Vyp. 28 Metody Diskretnogo Analiza v Teorii
	   Grafov i Logi\v{c}eskih Funkci\u{\i}, 3--11, 78. \MR{0498204}
	 
	 \bibitem{borodin}
	 \bysame, \emph{A proof of {B}. {G}r\"{u}nbaum's conjecture on the acyclic
	   {$5$}-colorability of planar graphs}, Dokl. Akad. Nauk SSSR \textbf{231}
	   (1976), no.~1, 18--20. \MR{0447031}
	 
	 \bibitem{Cranston}
	 Daniel~W. Cranston and Landon Rabern, \emph{Planar graphs have independence
	   ratio at least 3/13}, Electron. J. Combin. \textbf{23} (2016), no.~3, Paper
	   3.45, 28. \MR{3558082}
	 
	 \bibitem{DKK2015}
	 Zden\v{e}k Dvo\v{r}\'{a}k, Adam Kabela, and Tom\'{a}\v{s} Kaiser, \emph{Planar
	   graphs have two-coloring number at most 8}, J. Combin. Theory Ser. B
	   \textbf{130} (2018), 144--157. \MR{3772738}
	 
	 \bibitem{DK2018}
	 Zden\v{e}k Dvo\v{r}\'{a}k and Tom Kelly, \emph{Induced 2-degenerate subgraphs
	   of triangle-free planar graphs}, Electron. J. Combin. \textbf{25} (2018),
	   no.~1, Paper 1.62, 18. \MR{3785041}
	 
	 \bibitem{FJMS2002}
	 Ga\v{s}per Fijav\v{z}, Martin Juvan, Bojan Mohar, and Riste \v{S}krekovski,
	   \emph{Planar graphs without cycles of specific lengths}, European J. Combin.
	   \textbf{23} (2002), no.~4, 377--388. \MR{1914478}
	 
	 \bibitem{KL2017}
	 Tom Kelly and Chun-Hung Liu, \emph{Minimum size of feedback vertex sets of
	   planar graphs of girth at least five}, European J. Combin. \textbf{61}
	   (2017), 138--150. \MR{3588714}
	 
	 \bibitem{KMSYZ2009}
	 Hal Kierstead, Bojan Mohar, Simon {\v{S}}pacapan, Daqing Yang, and Xuding Zhu,
	   \emph{The two-coloring number and degenerate colorings of planar graphs},
	   SIAM J. Discrete Math. \textbf{23} (2009), no.~3, 1548--1560. \MR{2556548
	   (2011b:05073)}
	 
	 \bibitem{Le2018}
	 Hung Le, \emph{A better bound on the largest induced forests in triangle-free
	   planar graph}, Graphs Combin. \textbf{34} (2018), no.~6, 1217--1246.
	   \MR{3881263}
	 
	 \bibitem{LMZ2014}
	 Robert Luko\v{t}ka, J\'{a}n Maz\'{a}k, and Xuding Zhu, \emph{Maximum
	   4-degenerate subgraph of a planar graph}, Electron. J. Combin. \textbf{22}
	   (2015), no.~1, Paper 1.11, 24. \MR{3315453}
	 
	 \bibitem{RW2008}
	 Andr\'{e} Raspaud and Weifan Wang, \emph{On the vertex-arboricity of planar
	   graphs}, European J. Combin. \textbf{29} (2008), no.~4, 1064--1075.
	   \MR{2408378}
	 
	 \bibitem{tomassen1994}
	 Carsten Thomassen, \emph{Every planar graph is {$5$}-choosable}, J. Combin.
	   Theory Ser. B \textbf{62} (1994), no.~1, 180--181. \MR{1290638 (95f:05045)}
	 
	 \bibitem{thomassen1995}
	 \bysame, \emph{Decomposing a planar graph into degenerate graphs}, J. Combin.
	   Theory Ser. B \textbf{65} (1995), no.~2, 305--314. \MR{1358992}
	 
	 \bibitem{tomassen2001}
	 \bysame, \emph{Decomposing a planar graph into an independent set and a
	   3-degenerate graph}, J. Combin. Theory Ser. B \textbf{83} (2001), no.~2,
	   262--271. \MR{1866722}
	 
	 \bibitem{WL2002}
	 Weifan Wang and Ko-Wei Lih, \emph{Choosability and edge choosability of planar
	   graphs without five cycles}, Appl. Math. Lett. \textbf{15} (2002), no.~5,
	   561--565. \MR{1889505}
	 
	 \end{thebibliography}
\end{document}